\numberwithin{equation}{section}
\newtheorem{thm}{Theorem}[section]
\newtheorem{lemma}[thm]{Lemma}
\newtheorem{prop}[thm]{Proposition}
\newtheorem{cor}[thm]{Corollary}
{\theorembodyfont{\rmfamily}

\newtheorem{example}[thm]{Example}

\newtheorem{rmk}[thm]{Remark}
}
\newcommand{\qed}{\hfill \mbox{\raggedright \rule{.07in}{.1in}}}
\newenvironment{proof}{\vspace{1ex}\noindent{\bf
Proof}\hspace{0.5em}}{\hfill\qed\vspace{1ex}}
\newenvironment{pfof}[1]{\vspace{1ex}\noindent{\bf Proof of
#1}\hspace{0.5em}}{\hfill\qed\vspace{1ex}}
\newcommand{\R}{{\mathbb R}}
\newcommand{\Z}{{\mathbb Z}}
\newcommand{\C}{{\mathbb C}}
\newcommand{\D}{{\mathbb D}}
\newcommand{\T}{{\mathbb T}}
\newcommand{\barD}{{\overline{\D}}}
\newcommand{\cF}{{\mathcal F}}
\newcommand{\cH}{{\mathcal H}}
\newcommand{\cS}{{\mathcal S}}
\newcommand{\DE}{{\Delta_{\rm rapid}}}
\newcommand{\suma}{{\textstyle \sum_a}}
\newcommand{\Leb}{\operatorname{Leb}}
 \newcommand{\Int}{\operatorname{int}}
 \newcommand{\diam}{\operatorname{diam}}
 \newcommand{\dist}{\operatorname{dist}}
 \newcommand{\hX}{{\widehat X}}
 \newcommand{\hA}{{\widehat A}}
 \newcommand{\hB}{{\widehat B}}
 \newcommand{\hE}{{\widehat E}}
 \newcommand{\hJ}{{\widehat J}}
 \newcommand{\hR}{{\widehat R}}
 \newcommand{\hT}{{\widehat T}}
 \newcommand{\hV}{{\widehat V}}
 \newcommand{\hW}{{\widehat W}}
 \newcommand{\tw}{{\widetilde w}}
 \newcommand{\tv}{{\tilde v}}
 \newcommand{\tH}{{\widetilde H}}
 \newcommand{\tE}{{\widetilde E}}
 \newcommand{\bF}{{\bar F}}
 \newcommand{\bA}{{\bar A}}
 \newcommand{\bJ}{{\bar J}}
 \newcommand{\bV}{{\overline V}}
 \newcommand{\bW}{{\overline W}}
 \newcommand{\bY}{{\bar Y}}
 \newcommand{\bZ}{{\bar Z}}
 \newcommand{\bDelta}{{\bar \Delta}}
 \newcommand{\eps}{{\epsilon}}
 \newcommand{\bvarphi}{\bar\varphi}
 \newcommand{\bPhi}{\bar\Phi}
 \newcommand{\supp}{\operatorname{supp}}
\title{Sharp polynomial bounds on decay of correlations for multidimensional nonuniformly hyperbolic systems and billiards}
\author{
Henk Bruin\thanks{Faculty of Mathematics, University of Vienna, 1090 Vienna, AUSTRIA}
\and
Ian Melbourne\thanks{Mathematics Institute, University of Warwick, Coventry, CV4 7AL, UK}
\and
Dalia Terhesiu\thanks{Mathematisch Instituut, Niels Bohrweg 1, 2333 CA Leiden, THE NETHERLANDS}
}
\date{18 November 2018.  Revised 10 May 2020.}
\begin{document}

\maketitle

\begin{abstract}
Gou\"ezel and Sarig introduced operator renewal theory as a method to prove sharp results on polynomial decay of correlations for certain classes of nonuniformly expanding maps.  In this paper, we apply the method to 
planar dispersing billiards and multidimensional nonMarkovian intermittent maps.
\end{abstract}

\section{Introduction}

In two seminal papers, Young~\cite{Young98,Young99} obtained results on exponential and subexponential decay of correlations for nonuniformly hyperbolic dynamical systems.  In the case of subexponential decay, a natural question is to establish that the decay rates obtained in this way are optimal.  The first progress in this direction was by Sarig~\cite{Sarig02} who introduced the method of operator renewal theory.  This method was extended and refined by Gou\"ezel~\cite{Gouezel04a} and gives optimal results for one-dimensional intermittent maps of Pomeau-Manneville type~\cite{PomeauManneville80,Thaler80}.

A challenge has been to extend the applicability of operator renewal theory to higher-dimensional examples.  Two specific directions have required attention: (i) planar dispersing billiards, (ii) multidimensional nonMarkovian intermittent maps.  For results in these directions, we mention~\cite{HuVaientiapp,VaientiZhangsub}.

In this paper, we extend the operator renewal theory of Gou\"ezel and Sarig~\cite{Gouezel04a,Sarig02} to provide lower bounds in general situations where the Young tower method~\cite{Young99} provides upper bounds.  This includes directions~(i) and~(ii) above.  In the case of lower bounds for dispersing billiards, these are the first results using operator renewal theory, and the first results by any methods for billiards with decay rates other than $n^{-1}$.  For multidimensional intermittent maps, we obtain essentially optimal upper and lower bounds on decay of correlations.

Roughly speaking the result of Gou\"ezel and Sarig takes the following form.
Let $f:M\to M$ be an ergodic measure-preserving transformation defined on a probability space $(M,\mu)$.  The correlation function $\rho_{v,w}(n)$ is given by
\begin{equation} \label{eq:rho}
\textstyle \rho_{v,w}(n)=\int_M v\,w\circ f^n\,d\mu-\int_Mv\,d\mu\int_Mw\,d\mu
\end{equation}
for $L^2$ observables $v,\,w:M\to\R$.
For definiteness, as in~\cite{Gouezel04a,Sarig02} we consider one-dimensional Markovian intermittent maps such as in~\cite{LSV99} with $f(x)\approx x^{1+1/\beta}$ for $x$ near zero, where $\beta>1$,
and unique absolutely continuous invariant probability measure $\mu$.
Fix $\eta\in(0,1)$.
By~\cite{Young99}, there is a constant $C>0$ such that
\[
|\rho_{v,w}(n)|\le C{\|v\|}_{C^\eta}|w|_\infty\,n^{-(\beta-1)}
\]
for all $v\in C^\eta(M)$, $w\in L^\infty(M)$, $n\ge1$.
 Now fix a closed subset $X\subset M$ with $0\not\in X$
and let $h:X\to\Z^+$ denote the first return time to $X$.
By~\cite{Gouezel04a,Sarig02}, there exists a constant $C>0$ such that
\begin{equation} \label{eq:GS}
\textstyle \big|\rho_{v,w}(n)-\sum_{j>n}\mu(h>j)\int_M v\,d\mu\int_M w\,d\mu
\big|\le C{\|v\|}_{C^\eta}|w|_\infty\zeta_\beta(n)
\end{equation}
for all $v\in C^\eta(M)$ supported in $X$, $w\in L^\infty(X)$, $n\ge1$,
where 
\begin{equation} \label{eq:zeta}
\zeta_\beta(n)=\begin{cases}
n^{-\beta} & \beta>2 \\
n^{-2}\log n & \beta=2 \\
n^{-2(\beta-1)} & 1<\beta<2
\end{cases}.
\end{equation}
Since $\mu(h>n)\sim cn^{-\beta}$ for some $c>0$,
this shows that the results in~\cite{Young99} are sharp.
If in addition $\int v\,d\mu=0$, then $\rho_{v,w}(n)=O(n^{-\beta})$ for all $\beta>1$.  (One consequence of the main result in this paper is that the latter estimate holds for all $w\in L^\infty(M)$; this is not shown in previous papers.  See
Remark~\ref{rmk:main}.)

Abstract theorems for nonuniformly expanding and nonuniformly hyperbolic dynamical systems are stated in Sections~\ref{sec:main} and~\ref{sec:main2} respectively.
In common with the method of~\cite{Gouezel04a,Sarig02}, we induce on a convenient subset $Y\subset M$ with induced map $F:Y\to Y$ that is Gibbs-Markov for nonuniformly expanding maps and Gibbs-Markov after quotienting along local stable leaves for nonuniformly hyperbolic maps.  A key difference from~\cite{Gouezel04a,Sarig02} is that $F$ need not be a first return map.  As in~\cite{BruinTerhesiu18}, we are able to control the adverse effects associated with not being a first return and to obtain results that are essentially the same as those in~\cite{Gouezel04a,Sarig02}.

\begin{rmk}
We note that the setting in~\cite{VaientiZhangsub} is currently restricted to planar time-reversible systems.  
\end{rmk}

In the remainder of the introduction, we focus on the applications to
billiards and multidimensional intermittent systems.

\subsection{Billiard examples}
\label{sec:introbill}

Markarian~\cite{Markarian04} and Chernov \& Zhang~\cite{ChernovZhang05} considered a general framework for analysing decay of correlations for diffeomorphisms with singularities, with special emphasis on slowly mixing planar
dispersing billiards.
All known results on upper bounds for decay of correlations for dispersing billiards fall within this framework.  Within this framework, we obtain lower bounds.

The specific examples are described in more detail in Section~\ref{sec:billiard}.  
Here we summarize the results.
All integrals are with respect to Liouville measure.  Upper bounds are for general dynamically H\"older observables $v$ and $w$.  Lower bounds are for dynamically defined H\"older observables with nonzero mean supported in a suitable subset $X$ of phase space.

\vspace{1ex}
\noindent$\bullet$  {\bf Bunimovich stadia, semidispersing billiards, billiards with cusps.}
In these examples, the correlation decay rate $O(n^{-1})$ was established 
by~\cite{ChernovMarkarian07,ChernovZhang05,ChernovZhang08,Markarian04}.
By the argument in~\cite[Corollary~1.3]{BalintGouezel06}
(see also~\cite[Corollary~1.1]{BalintChernovDolgopyat11}), the result is essentially optimal in the sense that if $v=w$ and if $v$ is H\"older and satisfies a nondegeneracy condition, then $n\rho_{v,w}(n)\not\to0$ as $n\to\infty$.
However, for several years it remained an open question to obtain an asymptotic rate of the type~\eqref{eq:GS}.

We prove that for all three types of billiard there is a constant $c>0$ such that
$\rho_{v,w}(n)\sim cn^{-1}\int v\int w$.
The constant $c$ is given explicitly in terms of the billiard configuration space.
For example, in the case of a Bunimovich stadium with straight sides of length $\ell$, 
\[
c= \frac{4+3\log 3}{4-3\log 3}\,\frac{\ell^2}{4(\pi+\ell)}.
\]
(Throughout, $\log$ means logarithm to base $e$.)

A similar result for semidispersing billiards and billiards with cusps (but not stadia) can be found in~\cite{VaientiZhangsub}, though it is not clear that the asymptotic $\rho_{v,w}(n)\sim{\rm const.}\,n^{-1}$ is established there.

\vspace{1ex}
\noindent$\bullet$  {\bf Billiards with cusps at flat points.}
Correlation decay rates $O(n^{-(\beta-1)})$ with $\beta$ any prescribed value in $(1,2)$ were obtained in~\cite{Zhang17a}.
Here, $\beta$ corresponds to the flatness at the cusp.  We obtain the asymptotic $\rho_{v,w}(n)\sim cn^{-(\beta-1)}\int v\int w$. 
Again, the constant $c$ is given explicitly.

\vspace{1ex}
\noindent$\bullet$  {\bf Bunimovich flowers.}
The correlation decay rate $\rho_{v,w}(n)=O((\log n)^3n^{-2})$ was obtained in~\cite{ChernovZhang05}.
It is conjectured that the optimal rate is ${\rm const.}\,n^{-2}$.
We obtain the lower bound $\rho_{v,w}(n)\gg (\log n)^{-1}n^{-2}\int v\int w$.

\vspace{1ex}
\noindent$\bullet$  {\bf Dispersing billiards with vanishing curvature.}
Correlation decay rates $O((\log n)^\beta n^{-(\beta-1)})$ with $\beta$ any prescribed value in $(2,\infty)$ were obtained in~\cite{Zhang17a} for H\"older observables.
Here, $\beta$ corresponds to the flatness at the points of vanishing 
curvature.  
We obtain the lower bound $\rho_{v,w}(n)\gg (\log n)^{-1}n^{-(\beta-1)}\int v\int w$.

\subsection{Hu-Vaienti maps}
\label{sec:HuV}

We consider a class of
piecewise smooth multidimensional nonuniformly expanding intermittent maps $f:M\to M$, $M\subset\R^k$ compact, with a neutral fixed point.  
The case $k=1$ is very well-understood.  
Upper bounds on decay of correlations were obtained by~\cite{Hu04,Young99} and the results were shown to be sharp by~\cite{Gouezel04a,Hu04,Sarig02}.
Extending to multidimensional examples is relatively straightforward in the Markov case, but the nonMarkov case is very challenging because the standard symbolically H\"older spaces are unavailable for nonMarkov maps and there are difficulties using spaces of bounded variation in higher dimensions.
Also, as shown in~\cite{HuVaienti09}, such maps often have poor bounded distortion properties. 

Hu \& Vaienti~\cite{HuVaienti09} obtained results on existence of absolutely continuous ergodic invariant measures (both finite and infinite) for various classes of multidimensional nonMarkovian intermittent maps.  In a subsequent paper~\cite{HuVaientiapp}, first results on upper and lower bounds on decay of correlations were obtained.  As an application of the results in this paper, we obtain essentially optimal upper and lower bounds.

To fix ideas, we focus on~\cite[Example~5.1]{HuVaientiapp} as described in detail in Section~\ref{sec:HV}.  The neutral fixed point is taken to be at $0$ and 
$f(x)=x(1+|x|^\gamma+O(|x|^{\gamma'}))$ for $x$ close to $0$ where
$\gamma\in(0,k)$ and $\gamma'>\gamma$.
Using results of~\cite{AFLV11,HuVaienti09},
we show that  $\rho_{v,w}(n)=O(n^{-((k/\gamma)-1-\eps)})$ for $v$ H\"older and $w\in L^\infty$, where $\eps$ is arbitrarily small.
This is in marked contrast to~\cite{HuVaientiapp} who obtain results no better than $\rho_{v,w}(n)=O(n^{-((1/\gamma)-1)})$ in the multidimensional case $k\ge2$ and only for observables with support bounded away from $0$.

Moreover, our decay rate is essentially optimal.
For $v$ H\"older and $w\in L^\infty$ with supports bounded away from $0$ and nonzero mean,
we show that for any $\eps>0$
\begin{equation} \label{eq:HV}
n^{-((k/\gamma)-1+\eps)}\ll \rho_{v,w}(n)\ll n^{-((k/\gamma)-1-\eps)}.
\end{equation}

\vspace{1ex}
The remainder of the paper is organized as follows.  In Section~\ref{sec:prel}, we recall background material on inducing, Gibbs-Markov maps, Young towers, and Chernov-Markarian-Zhang structures.
Our main result for nonuniformly expanding maps is stated in Section~\ref{sec:main} and proved in Section~\ref{sec:proof}.
In Section~\ref{sec:tail}, we relate tail estimates for different return times.
In Section~\ref{sec:NUE}, we apply our results to multidimensional nonuniformly expanding maps including those mentioned in Subsection~\ref{sec:HuV}.

In Section~\ref{sec:main2}, we extend our main result to nonuniformly hyperbolic systems, including solenoidal versions of the maps in Section~\ref{sec:NUE}.
Finally, in Section~\ref{sec:billiard}, we consider the examples from billiards mentioned in Subsection~\ref{sec:introbill}.

\paragraph{Notation}
We use the ``big $O$'' and $\ll$ notation interchangeably, writing $a_n=O(b_n)$ or $a_n\ll b_n$ if there is a constant $C>0$ such that
$a_n\le Cb_n$ for all $n\ge1$.  Also, 
we write $a_n\approx b_n$ if $a_n\ll b_n\ll a_n$.
As usual, $a_n\sim b_n$ as $n\to\infty$ means that $\lim_{n\to\infty}a_n/b_n=1$.

Convolution of sequences $a_n$, $b_n$ ($n\ge0$) is denoted
$(a\star b)_n=\sum_{j=0}^n a_j b_{n-j}$.
Often we use the abuse of notation $a_n\star b_n$.
If $a_0$ is undefined (as for example $a_n=n^{-2}\log n$) then we redefine $a_0=1$ without mentioning it.  With these conventions we have the standard facts
$n^{-p}\star n^{-q}=O(n^{-q})$ and 
$n^{-p}\star n^{-q}\log n=O(n^{-q}\log n)$ for all $p>1$, $q\in(0,p]$.

\section{Preliminaries}
\label{sec:prel}

In this section, we recall background material on (one-sided)
Chernov-Markarian-Zhang structures.

\paragraph{Gibbs-Markov maps}

Let $(Y,\mu_Y)$ be a probability space with an at most countable measurable partition $\alpha$, and let $F:Y\to Y$ be an ergodic measure-preserving transformation.
For $\theta\in(0,1)$, define the {\em separation time} $s(y,y')$ to be the least integer $n\ge0$ such that $F^ny$ and $F^ny'$ lie in distinct partition elements in $\alpha$.
It is assumed that the partition $\alpha$ separates trajectories, so $s(y,y')=\infty$ if and only if $y=y'$;  then $\theta^s$ is a metric.

Let $\xi=\frac{d\mu_Y}{d\mu_Y\circ F}:Y\to\R$. 
We say that $F$ is a {\em (full-branch) Gibbs-Markov map} if 
\begin{itemize}
\item $F|_a:a\to Y$ is a measurable bijection for each $a\in\alpha$, and
\item 
There are constants $C>0$, $\theta\in(0,1)$ such that
$|\log\xi(y)-\log\xi(y')|\le C\theta^{s(y,y')}$ for all $y,y'\in a$, $a\in\alpha$.
\end{itemize}
A consequence is that there is a constant $C>0$ such that 
\begin{equation} \label{eq:GM}
\xi(y)\le C\mu_Y(a) \qquad\text{and} \qquad
|\xi(y)-\xi(y')|\le C\mu_Y(a)\theta^{s(y,y')},
\end{equation}
 for all 
$y,y'\in a$, $a\in\alpha$.

\paragraph{Return maps}
Suppose that $(M,\mu)$ is a probability space and that $f:M\to M$ is an ergodic measure-preserving transformation.  Fix a measurable subset $X\subset M$ with $\mu(X)>0$ and $h:X\to\Z^+$ integrable such that
$f^{h(x)}x\in X$ for all $x\in X$.  Then $h$ is called a {\em return time} and
$f^h:X\to X$ is called a {\em return map}.

If $h$ is the {\em first return time} to $X$ under $f$ (i.e.\ $h(x)=\inf\{n\ge1:f^nx\in X\}$), then $f^h:X\to X$ is called the {\em first return map} and
$\mu_X=(\mu|_X)/\mu(X)$ is an ergodic $f^h$-invariant probability measure on $X$.

\paragraph{Young towers}

Let $F:Y\to Y$ be a full-branch Gibbs-Markov map on $(Y,\mu_Y)$ with partition $\alpha$ and
let $\varphi:Y\to\Z^+$ be an integrable function constant on partition elements.  
We define the {\em (one-sided) Young tower} $\Delta=Y^\varphi$ and {\em tower map} $f_\Delta:\Delta\to\Delta$ as follows:
\[
\Delta=\{(y,\ell)\in Y\times\Z: 0\le \ell\le \varphi(y)-1\},
\quad
f_\Delta(y,\ell)=\begin{cases}
(y,\ell+1) & \ell\le\varphi(y)-2 \\
(Fy,0) & \ell=\varphi(y)-1
\end{cases}.
\]
Let $\bvarphi=\int_Y\varphi\,d\mu_Y$.
Then $\mu_\Delta=(\mu_Y\times{\rm counting})/\bvarphi$ is an ergodic $f_\Delta$-invariant probability measure on $\Delta$,
and it is mixing if and only if $\gcd\{\varphi(a):a\in\alpha\}=1$.
The tower has {\em exponential tails} if $\mu_Y(\varphi>n)=O(e^{-cn})$ for some $c>0$,
{\em polynomial tails} if $\mu_Y(\varphi>n)=O(n^{-\beta})$ for some $\beta>1$,
and {\em superpolynomial tails} if $\mu_Y(\varphi>n)=O(n^{-\beta})$ for all $\beta>1$.

Now suppose that 
$f:M\to M$ is an ergodic measure-preserving transformation
on a probability space $(M,\mu)$, and that $Y\subset M$ is measurable with $\mu(Y)>0$.
Suppose that $F:Y\to Y$ is a full-branch Gibbs-Markov map with respect to a probability measure $\mu_Y$ on $Y$, and that $\varphi:Y\to \Z^+$ is a return time, 
constant on partition elements, such that $F=f^\varphi$.
Form the tower $\Delta=Y^\varphi$ and tower map $f_\Delta:\Delta\to\Delta$.  The map $\pi_M:\Delta\to M$, $\pi_M(y,\ell)=f^\ell y$ defines a semiconjugacy between $f_\Delta$ and $f$.  We require moreover that $(\pi_M)_*\mu_\Delta=\mu$.
Then we say that $f$ is {\em modelled by a Young tower}.

\paragraph{Chernov-Markarian-Zhang structure}
Suppose that $(M,\mu)$ is a probability space and let
$f:M\to M$ be an ergodic and mixing measure-preserving transformation.
Roughly speaking, the map $f$ admits 
a Chernov-Markarian-Zhang structure if there is an integrable first return time $h:X\to\Z^+$ such that
the first return map $f_X=f^h:X\to X$ is modelled by a Young tower $Y^\sigma$.
The full map $f:M\to M$ is also modelled by a Young tower $Y^\varphi$.
We denote these towers by $\Delta=Y^\varphi$ and $\DE=Y^\sigma$ since in the applications that we have in mind either the tower $\DE$ is exponential or 
for any $q>1$ the subset $Y\subset X$ can be chosen such that $f_X$ is modelled by a Young tower $Y^\sigma$ with
$\mu_Y(\sigma>n)=O(n^{-q})$.  In the latter case, we say that $f_X$ is {\em modelled by Young towers with superpolynomial tails}.

In more detail, suppose $Y\subset X\subset M$ are Borel sets with 
$\mu(Y)>0$.
Define
the first return time $h:X\to\Z^+$ and first return map $f_X=f^h:X\to X$. 

We assume that $f_X:X\to X$ is modelled by a Young tower $\DE=Y^\sigma$ with 
return time $\sigma:Y\to\Z^+$ and return map $F=f_X^\sigma:Y\to Y$.
In particular, $F=f_X^\sigma:Y\to Y$ is a full-branch Gibbs-Markov map with ergodic invariant probability measure $\mu_Y$ and partition $\alpha$ such that
$\sigma$ is constant on partition elements.  
We require in addition that $h$ is constant on $f_X^\ell a$
for all $a\in\alpha$, $0\le\ell\le\sigma(a)-1$.

Define the {\em induced return time}
\begin{equation} \label{eq:induce}
\textstyle \varphi=h_\sigma:Y\to\Z^+, \qquad
\varphi(y)=\sum_{\ell=0}^{\sigma(y)-1}h(f^\ell y).
\end{equation}
Then $\varphi$ is integrable with respect to $\mu_Y$ and constant on partition elements.
In particular, $f:M\to M$ is modelled by a 
Young tower $\Delta=Y^\varphi$ with the same Gibbs-Markov map $F=f_X^\sigma=f^\varphi$.

We say that $f:M\to M$ satisfying these assumptions possesses a {\em Chernov-Markarian-Zhang structure}.

\begin{rmk} \label{rmk:CMZ}
The method of choosing a first return map modelled by a Young tower with exponential tails arises in various contexts in the literature, see for example~\cite{BruinLuzzattoStrien03,BruinTerhesiu18} in the noninvertible context.
However, the method plays a special role in the context of billiards~\cite{ChernovZhang05,Markarian04}, see Remark~\ref{rmk:CMZ2} below.
\end{rmk}

\begin{rmk} \label{rmk:d}
It is part of our set up that $\mu$ is mixing, but in general the tower map $f_\Delta:\Delta\to\Delta$ is mixing only up to a finite cycle $d\ge1$ where $d$ is often unknown.  As in~\cite[Theorem~2.1, Proposition~10.1]{Chernov99}, the {\em a priori} knowledge  that $\mu$ is mixing ensures that for many purposes the value of $d$ is irrelevant (in fact it suffices that $\mu$ is ergodic for all powers of $f$).  
\end{rmk}

\paragraph{Dynamically H\"older observables}

Suppose that $f:M\to M$ possesses a Chernov-Markarian-Zhang structure as above.
Fix $\theta\in(0,1)$.  For $v:M\to\R$, define
\begin{equation} \label{eq:dyn}
{\|v\|}_\cH=|v|_\infty+{|v|}_\cH, \quad 
{|v|}_\cH=\sup_{y,y'\in Y,\,y\neq y'}\sup_{0\le\ell\le \varphi(y)-1}\frac{|v(f^\ell y)-v(f^\ell y')|}{\theta^{s(y,y')}}.
\end{equation}
We say that $v$ is {\em dynamically H\"older}
if ${\|v\|}_\cH<\infty$ and denote by $\cH(M)$ the space of such observables.

Of particular interest are observables supported in $X$.
We identify $L^\infty(X)$ with $\{w\in L^\infty(M):w|_{M\setminus X}\equiv0\}$.
Also, we write $\cH(X)=\{v\in \cH(M):\supp v\subset X\}$.

It is standard that H\"older observables are dynamically H\"older for the classes of dynamical systems of interest in this paper, as we now recall.
Given $\eta\in(0,1]$ and a metric $d$ on $M$, define
\[
{|v|}_{C^\eta}=\sup_{x,x'\in M,\,x\neq x'}|v(x)-v(x)|/d(x,x')^\eta.
\]
Let $C^\eta(M)$ be the space of bounded observables $v:M\to\R$ for which
${|v|}_{C^\eta}<\infty$.

\begin{prop} \label{prop:dyn}
Let $\eta\in(0,1]$.  Suppose that
there exist $K>0$, $\theta_0\in(0,1)$ such that
$d(f^\ell y,f^\ell y')\le K\theta_0^{s(y,y')}$
 for all $y,y'\in Y$, $0\le\ell\le \varphi(y)-1$.

Then 
$C^\eta(M)\subset \cH(M)$ where we may choose any $\theta\in[\theta_0^\eta,1)$.
\end{prop}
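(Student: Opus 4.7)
The plan is to unwind the definitions and reduce the claim to a single inequality between the Hölder and separation-time metrics. Fix $\eta \in (0,1]$ and $\theta \in [\theta_0^\eta, 1)$, and take an arbitrary $v \in C^\eta(M)$. Since $v$ is bounded, $|v|_\infty < \infty$, so it suffices to show that ${|v|}_\cH < \infty$.

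The key step is to estimate a single numerator. Given $y, y' \in Y$ with $y \neq y'$ and $0 \le \ell \le \varphi(y) - 1$, I would apply the Hölder bound for $v$ to the pair $f^\ell y, f^\ell y' \in M$, and then plug in the hypothesis $d(f^\ell y, f^\ell y') \le K \theta_0^{s(y,y')}$. This gives
\[
|v(f^\ell y) - v(f^\ell y')| \le {|v|}_{C^\eta}\, d(f^\ell y, f^\ell y')^\eta \le {|v|}_{C^\eta}\, K^\eta\, \theta_0^{\eta s(y,y')}.
\]

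Dividing by $\theta^{s(y,y')}$ and using $\theta \ge \theta_0^\eta$, so that $\theta_0^{\eta s(y,y')} / \theta^{s(y,y')} \le 1$, yields
\[
\frac{|v(f^\ell y) - v(f^\ell y')|}{\theta^{s(y,y')}} \le K^\eta\, {|v|}_{C^\eta}.
\]
Taking the supremum over $y, y', \ell$ gives ${|v|}_\cH \le K^\eta {|v|}_{C^\eta}$, hence ${\|v\|}_\cH \le |v|_\infty + K^\eta {|v|}_{C^\eta} < \infty$, so $v \in \cH(M)$.

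There is no real obstacle here: the proposition is essentially a bookkeeping exercise translating the metric hypothesis on orbits through tower fibres into the separation-time gauge defining $\cH(M)$. The only point requiring care is the choice $\theta \ge \theta_0^\eta$, which is precisely what is needed so that the factor $\theta_0^\eta / \theta$ is at most $1$ and the estimate survives exponentiating the separation time. No properties of $F$, $\mu_Y$, or $\varphi$ beyond those implicit in the hypothesis are needed.
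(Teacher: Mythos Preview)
Your proof is correct and follows essentially the same approach as the paper: both apply the H\"older estimate to $f^\ell y, f^\ell y'$, insert the hypothesis $d(f^\ell y,f^\ell y')\le K\theta_0^{s(y,y')}$, and use $\theta\ge\theta_0^\eta$ to conclude ${|v|}_\cH\le K^\eta{|v|}_{C^\eta}$. You are simply more explicit about the step where $\theta_0^{\eta s(y,y')}\le\theta^{s(y,y')}$.
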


\begin{proof}
Let $v\in C^\eta(M)$, $y,y'\in Y$, $0\le\ell<\varphi(y)-1$. Then
\begin{align*}
|v(f^\ell y)- v(f^\ell y')|
 \le {|v|}_{C^\eta}d(f^\ell y,f^\ell y')^\eta
   \le K^\eta {|v|}_{C^\eta} \theta^{s(y,y')}.
\end{align*}
Hence ${|v|}_{\cH}\le K^\eta {|v|}_{C^\eta}$
and it follows that $v\in \cH(M)$.
\end{proof}

\section{Statement of the main result}
\label{sec:main}

In this section, we state our main abstract result for
maps $f:M\to M$ with a Chernov-Markarian-Zhang structure.  
Let $Y\subset X\subset M$ denote the corresponding return map sets and
recall that $\varphi=h_\sigma:Y\to\Z^+$ is the induced return time.
Throughout, we suppose that 
$\mu_Y(\varphi>n)=O(n^{-\beta'})$ for some $\beta'>1$.
(As discussed in Section~\ref{sec:tail}, in our main examples any $\beta'<\beta$ is permitted where
$\mu_X(h>n)=O(n^{-\beta})$, and often we can take $\beta'=\beta$. However, $h$ and $\beta$ play no role in this section.)

Define  the correlation function $\rho_{v,w}(n)$ as in~\eqref{eq:rho}.
It follows from Young~\cite{Young99} that $\rho_{v,w}(n)=O(\|v\|_\cH |w|_\infty\, n^{-(\beta'-1)})$ for all $v\in\cH(M)$, $w\in L^\infty(M)$, $n\ge1$.
We can now state our main theorem.
Let 
\begin{equation} \label{eq:gamma}
\textstyle \sigma_n= \int_Y \sigma 1_{\{\varphi>n\}}d\mu_Y, \qquad
\gamma_n=n^{-\beta'}\star \sigma_n.
\end{equation}
Define $\zeta_{\beta'}$ as in~\eqref{eq:zeta}.

\begin{thm} \label{thm:main}
Let $f:M\to M$ be a map with a Chernov-Markarian-Zhang structure, and suppose that $\mu_Y(\varphi>n)=O(n^{-\beta'})$ for some $\beta'>1$.  
Then there  is a constant $C>0$ such that for all $n\ge1$,
\begin{itemize}
\item[(a)]
$\displaystyle \Big|\rho_{v,w}(n)-\bvarphi^{-1}\sum_{j>n} \mu_Y(\varphi>j)\int_Mv\,d\mu\int_Mw\,d\mu\Big|
\le C{\|v\|}_{\cH}|w|_\infty(\gamma_n+\zeta_{\beta'}(n))$
\newline for all $v\in \cH(X)$, $w\in L^\infty(X)$,
\item[(b)]
$|\rho_{v,w}(n)|\le C{\|v\|}_{\cH}|w|_\infty\gamma_n$ for all
$v\in \cH(X)$ with $\int_M v\,d\mu=0$, $w\in L^\infty(M)$.
\end{itemize}
\end{thm}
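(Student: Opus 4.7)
The plan is to adapt the operator renewal machinery of~\cite{Gouezel04a,Sarig02}, following the refinements in~\cite{BruinTerhesiu18}, to the Chernov-Markarian-Zhang setting. Pass first to the Young tower $\Delta=Y^\varphi$: since $(\pi_M)_*\mu_\Delta=\mu$, the correlation $\rho_{v,w}(n)$ equals the corresponding correlation for the lifts $\tv=v\circ\pi_M$ and $\tw=w\circ\pi_M$ under $f_\Delta$. Because $\supp v,\supp w\subset X$, these lifts are concentrated on the $\Delta$-levels that correspond to $f_X$-iterates of $Y$, i.e.\ on the copy of $\DE=Y^\sigma$ sitting inside $\Delta$; decomposing $\tv=\sum_k\tv_k$ and $\tw=\sum_\ell\tw_\ell$ according to $\DE$-levels yields a clean expansion of $\int_\Delta\tv\,\tw\circ f_\Delta^n\,d\mu_\Delta$ as a sum over $(k,\ell)$. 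With $L$ the transfer operator of $f_\Delta$, introduce the first-return operator $T_n\phi=1_YL^n(1_{\{\varphi=n\}}\phi)$ and the at-base operator $R_n\phi=1_YL^n(1_Y\phi)$ on $Y$; the renewal relation $\hR(z)=(I-\hT(z))^{-1}$, together with the Gibbs-Markov property of $F=f_\Delta^\varphi$ and the tail $\mu_Y(\varphi>n)=O(n^{-\beta'})$, gives the Gou\"ezel-Sarig asymptotic
\[
R_n\phi=\bvarphi^{-1}\Big(\int_Y\phi\,d\mu_Y\Big)1_Y+E_n\phi,\qquad \|E_n\|=O(\zeta_{\beta'}(n)),
\]
on the appropriate dynamically H\"older space on $Y$.

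Splitting $[0,n]$ at the last passage through $Y$ expresses the tower correlation as a triple sum over $(k,\ell,m)$ with $k+\ell+m=n$, where $k,\ell$ index the $\DE$-levels of $\tv,\tw$ and $m$ is the at-base gap controlled by $R_m$. Substituting the renewal expansion, the projection part of $R_m$ reassembles through the identity $\int_Mv\,d\mu=\bvarphi^{-1}\sum_k\int_Y\tv_k\,d\mu_Y$ (and its analogue for $w$) into the leading term $\bvarphi^{-1}\sum_{j>n}\mu_Y(\varphi>j)\int_Mv\int_Mw$; the error operator $E_m$ contributes $O({\|v\|}_\cH|w|_\infty\zeta_{\beta'}(n))$; and the boundary terms in which one of $k,\ell$ is of order $n$ are dominated by $|w|_\infty\sum_m\mu_Y(\varphi>m)\sigma_{n-m}=n^{-\beta'}\star\sigma_n=\gamma_n$, using $\int_Y\sigma\,1_{\{\varphi>n\}}\,d\mu_Y=\sigma_n$. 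This proves part~(a).

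For part~(b) with $\int_Mv\,d\mu=0$, the projection contribution of $R_m$ vanishes after summing over $k$, killing the $\sum_{j>n}\mu_Y(\varphi>j)$ leading term; what remains is the $E_m$ part applied to a mean-zero observable, which admits the sharper $\gamma_n$ bound via the mean-zero refinement of the renewal theorem. Without a leading term feeding on $\int w$, the hypothesis $w\in L^\infty(X)$ relaxes to $w\in L^\infty(M)$ by treating $w|_{M\setminus X}$ separately: its contribution is supported on $\Delta\setminus\DE$ and meets $\tv$ only through boundary terms again bounded by $\gamma_n$. The chief obstacle throughout is the non-first-return feature of $F=f_X^\sigma$, which enlarges the boundary terms compared with the classical first-return setup of~\cite{Gouezel04a,Sarig02}; the Bruin-Terhesiu identity above absorbs them cleanly into $\gamma_n$.
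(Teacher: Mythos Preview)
Your outline follows essentially the same route as the paper: lift to the tower, expand the correlation via last passage through $Y$, apply Gou\"ezel's renewal asymptotics for the at-base operators on the Gibbs-Markov base, and control the boundary pieces by $\sigma_n$ and hence $\gamma_n$. Two points, however, are genuine gaps rather than mere omissions of detail.

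First, you implicitly assume that $f_\Delta$ is mixing. In general $d=\gcd\{\varphi(a):a\in\alpha\}$ may exceed~$1$, and then the renewal asymptotic you quote fails on $\Delta=Y^\varphi$. The paper handles this by passing to the reduced tower $Y^\Phi$ with $\Phi=d^{-1}\varphi$, semiconjugating $f_\Delta$ to $g=f^d$, writing $n=md-r$, and then checking that $\bPhi^{-1}\sum_{j>m}\mu_Y(\Phi>j)$ differs from $\bvarphi^{-1}\sum_{j>n}\mu_Y(\varphi>j)$ by $O(n^{-\beta'})$. Your sketch contains no mechanism for this reduction.

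Second, your account of part~(b) is not quite right. You say that after the projection part cancels, ``what remains is the $E_m$ part applied to a mean-zero observable, which admits the sharper $\gamma_n$ bound.'' But the error operator with norm $O(\zeta_{\beta'}(n))$ does not improve simply because its input has mean zero; for $1<\beta'<2$ one has $\zeta_{\beta'}(n)=n^{-2(\beta'-1)}$, which is in general much larger than $\gamma_n$. The paper instead uses a \emph{different} renewal decomposition $T(n)=\tilde b(n)P+\tH(n)$ with $\|\tH(n)\|=O(n^{-\beta'})$, together with the bound $\|B(n)\|=O(n^{-\beta'})$ for the Fourier coefficients of $(z-1)\hT(z)$. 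The mean-zero hypothesis $P\hV(1)=0$ is then used to rewrite the $\tilde b$-contribution so that it involves only the $A_1$-type term (controlled by $\sigma_n$) and not the $A_2$-type term. This last point is exactly why the support condition on $w$ relaxes to $w\in L^\infty(M)$: the only estimate in the whole argument that needed $\supp\tw\subset\hX$ was the bound on $A_2$, and that term has disappeared. Your alternative explanation, treating $w|_{M\setminus X}$ separately as a boundary term, does not obviously give $\gamma_n$.
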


Clearly $n^{-\beta'}\le\sigma_n\le\gamma_n$.
The sequences are readily estimated from above:
\begin{prop} \label{prop:int}
\begin{itemize}

\parskip= -2pt
\item If $\sigma$ is bounded, then $\gamma_n=O( n^{-\beta'})$.
\item If $\sigma$ has exponential tails, then $\gamma_n=O( n^{-\beta'}\log n)$.
\item Let $\eps>0$.  If $\mu_Y(\sigma>n)=O(n^{-q})$ with $q$ sufficiently large (depending on $\eps$),  then $\gamma_n=O( n^{-(\beta'-\eps)})$.
\end{itemize}
\end{prop}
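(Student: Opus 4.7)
The plan is to bound $\sigma_n$ in each of the three cases and then deduce the bound on $\gamma_n$ by applying the convolution identities $n^{-p}\star n^{-q}=O(n^{-q})$ and $n^{-p}\star n^{-q}\log n=O(n^{-q}\log n)$ (valid for $p>1$, $q\in(0,p]$) recalled at the end of the introduction. In each case the convolution step is immediate once the correct bound on $\sigma_n$ is in hand, so the content is to estimate $\sigma_n=\int_Y\sigma\,1_{\{\varphi>n\}}\,d\mu_Y$.

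\textbf{Bounded case.} Simply $\sigma_n\le|\sigma|_\infty\,\mu_Y(\varphi>n)\ll n^{-\beta'}$. Convolving, $\gamma_n=n^{-\beta'}\star n^{-\beta'}\ll n^{-\beta'}$.

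\textbf{Exponential tails.} For a threshold $t>0$ split
\[
\sigma_n\le t\,\mu_Y(\varphi>n)+\int_{\{\sigma>t\}}\sigma\,d\mu_Y.
\]
Writing the second piece as $t\mu_Y(\sigma>t)+\int_t^\infty\mu_Y(\sigma>s)\,ds$ and using $\mu_Y(\sigma>s)\ll e^{-cs}$, it is $O(e^{-c't})$ for some $c'>0$. Choosing $t$ a suitable multiple of $\log n$ so that $e^{-c't}\le n^{-\beta'}$, we obtain $\sigma_n\ll n^{-\beta'}\log n$, and hence $\gamma_n=n^{-\beta'}\star n^{-\beta'}\log n\ll n^{-\beta'}\log n$.

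\textbf{Polynomial tails with $q$ large.} By H\"older's inequality with conjugate exponents $p,p'$,
\[
\sigma_n\le\|\sigma\|_{L^p(\mu_Y)}\,\mu_Y(\varphi>n)^{1/p'}\ll n^{-\beta'/p'},
\]
with $\|\sigma\|_{L^p}<\infty$ whenever $p<q$ (a consequence of $\mu_Y(\sigma>n)\ll n^{-q}$). Given $\eps\in(0,\beta')$, take $p'=\beta'/(\beta'-\eps)$ and correspondingly $p=\beta'/\eps$; provided $q>\beta'/\eps$, this $p$ is admissible and yields $\sigma_n\ll n^{-(\beta'-\eps)}$. Since $\beta'-\eps\in(0,\beta']$ and $\beta'>1$, the convolution identity delivers $\gamma_n=n^{-\beta'}\star n^{-(\beta'-\eps)}\ll n^{-(\beta'-\eps)}$.

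\textbf{Main obstacle.} None of the three estimates is deep; the entire proposition is essentially truncation plus H\"older plus the standard convolution asymptotics. The only care needed is the balancing: choosing the truncation level $t\asymp\log n$ in the exponential case, and choosing the H\"older exponent $p\asymp\beta'/\eps$ in the polynomial case, so that the bound on $\|\sigma\|_{L^p}$ (which requires $p<q$) forces the quantitative relation between $\eps$ and $q$ asserted in the statement.
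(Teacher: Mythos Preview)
Your proof is correct and essentially matches the paper's approach: the paper writes out the exponential-tail case exactly as you do (split at a threshold $K$, balance the two pieces by taking $K$ a multiple of $\log n$), and says the other cases are ``similar and hence omitted.'' Your H\"older argument in the polynomial case is a clean alternative to the analogous truncation argument (split at $K$ and optimize $K\approx n^{\beta'/q}$), yielding the same dependence $q>\beta'/\eps$; both are standard and equivalent here.
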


\begin{proof}
Suppose that $\sigma$ has exponential tails, and fix $K>0$.  Then 
\[
\sigma_n=\int_Y \sigma 1_{\{\varphi>n\}}\mu_Y
\le K\mu_Y(\varphi>n) + \int_Y 1_{\{\sigma>K\}}\sigma\,d\mu_Y
\ll Kn^{-\beta'}+O(e^{-cK}),
\]
for some $c>0$.  Choosing $K=(\beta'/c)\log n$, we obtain
$\sigma_n=O(n^{-\beta'}\log n)$.
Also $n^{-\beta'}\star n^{-\beta'}\log n =O(n^{-\beta'}\log n)$.

The other cases are similar and hence omitted.
\end{proof}

\begin{rmk} \label{rmk:main}
In particular, if $\sigma$ is bounded, then we are back in the situation of~\cite{Gouezel04a,Sarig02} and our estimates reduce to theirs.
Note that we have the slight improvement in Theorem~\ref{thm:main}(b) that $w$ is an arbitrary $L^\infty$ function, not necessarily supported in $X$.
Such a result does not seem to have been noted before.

When $\sigma$ is unbounded,~\cite{Gouezel04a,Sarig02} does not apply directly since the estimates required for applying operator renewal theory are problematic on $X$, while the dynamics on $Y$ is not given by a first return map, so it is necessary to incorporate arguments from~\cite{BruinTerhesiu18}.
\end{rmk}

\begin{rmk} 
As in~\cite{BruinTerhesiu18}, we can incorporate observables supported on the whole of $M$ that decay sufficiently quickly off $X$.  
Let $\tilde\sigma:Y\to\R$.
Suppose that $v,w:M\to\R$ are such that
$\sum_{\ell=0}^{\varphi-1}|v\circ f^\ell|\le \tilde\sigma$,
$\sum_{\ell=0}^{\varphi-1}|w\circ f^\ell|\le \tilde\sigma$ on $Y$.
Then Theorem~\ref{thm:main} holds with $\sigma_n$ defined using $\tilde\sigma$ instead of $\sigma$.

In contrast to~\cite{BruinTerhesiu18}, we do not require that the H\"older constants of $v$ decay off $X$.
\end{rmk}

\section{Proof of the main theorem}
\label{sec:proof}

In this section we prove Theorem~\ref{thm:main}.
We continue to suppose that $f:M\to M$  possesses a Chernov-Markarian-Zhang structure and that $\mu_Y(\varphi>n)=O(n^{-\beta'})$ for some $\beta'>1$.  
In Subsection~\ref{sec:Delta}, we state an analogous result, Theorem~\ref{thm:Delta}, for observables defined on $\Delta$ and deduce Theorem~\ref{thm:main} as a consequence.
In Subsection~\ref{sec:TR}, we recall some results from operator renewal theory for the Gibbs-Markov map on $Y$.
In Subsection~\ref{sec:X}, we prove Theorem~\ref{thm:Delta}.

\subsection{Tower reformulation}
\label{sec:Delta}

Recall that $M$ is modelled by a Young tower $\Delta=Y^\varphi$ and that
$F=f^\varphi:Y\to Y$ is a full-branch Gibbs-Markov map.
Let $d=\gcd\{\varphi(a):a\in\alpha\}$.  
The tower map $f_\Delta:\Delta\to\Delta$ is mixing if and only if  
$d=1$.   To deal with the cases $d=1$ and $d\ge2$ uniformly, we set $\Phi=d^{-1}\varphi$.  
Replace $\Delta$ by $\Delta=Y^\Phi$ and redefine $f_\Delta:\Delta\to\Delta$ accordingly.  Also, define $\mu_\Delta=(\mu_Y\times{\rm counting})/\bPhi$.
Then $f_\Delta$ is mixing.

Define
\[
\pi_M:\Delta\to M, \qquad \pi_M(y,\ell)=f^{d\ell}y.
\]
Then $\pi_M$ is a semiconjugacy between $f_\Delta$ and $g=f^d$,
and $(\pi_M)_*\mu_\Delta$   is an ergodic $g$-invariant probability measure on $M$.
It is an easy consequence of the definitions that $(\pi_M)_*\mu_\Delta$ is absolutely continuous with respect to the original measure $\mu$.  Moreover, $\mu$ is mixing for $f$ by assumption and so
is also ergodic for $g$.   Hence $\pi_M$ is a measure-preserving semiconjugacy between $(\Delta,f_\Delta,\mu_\Delta)$ and $(M,g,\mu)$.

Observables $v:M\to\R$ supported in $X$  lift to observables $\tv=v\circ \pi_M:\Delta\to\R$ supported in $\pi_M^{-1}(X)\subset\Delta$.
More generally, we consider observables $v$ supported in
$X_d=X\cup f^{-1}X\cup\cdots\cup f^{-(d-1)}X$.
Such observables lift to observables $\tv:\Delta\to\R$ supported in $\hX=\pi_M^{-1}(X_d)\subset\Delta$.

\begin{prop} \label{prop:hX}
$\sum_{\ell=0}^{\Phi(y)-1}1_{\hX}(y,\ell) \le \sigma(y)$ for $y\in Y$.
\end{prop}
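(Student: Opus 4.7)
The plan is to unpack the definitions of $\hX$ and $\Phi$, convert the sum into a count of hits of the orbit of $y$ to $X$, and compare with $\sigma(y)$ via the definition of the induced return time $\varphi = h_\sigma$.

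First I would rewrite the indicator. By the definitions of $\pi_M$ and $X_d$, we have $(y,\ell)\in\hX$ precisely when $f^{d\ell}y\in X_d$, i.e.\ when there exists some $j\in\{0,1,\dots,d-1\}$ with $f^{d\ell+j}y\in X$. As $\ell$ ranges from $0$ to $\Phi(y)-1$, the intervals $\{d\ell,d\ell+1,\dots,d\ell+d-1\}$ form a partition of $\{0,1,\dots,d\Phi(y)-1\}=\{0,1,\dots,\varphi(y)-1\}$ into $\Phi(y)$ disjoint blocks of length $d$. Hence
\[
\sum_{\ell=0}^{\Phi(y)-1}1_{\hX}(y,\ell)=\#\bigl\{\,\ell:0\le\ell\le\Phi(y)-1,\ \text{the block $[d\ell,d\ell+d-1]$ contains some $k$ with $f^ky\in X$}\,\bigr\}.
\]

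Next I would bound the right-hand side by the total number of hits of the $f$-orbit of $y$ to $X$ during time $[0,\varphi(y)-1]$, since each block contributes at most $1$ to the sum but contains at least one hit whenever it is counted. So the sum is at most $\#\{k:0\le k\le\varphi(y)-1,\ f^ky\in X\}$.

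Finally I would identify this count with $\sigma(y)$. Because $h$ is the \emph{first} return time to $X$ and $y\in Y\subset X$, the successive hits of the $f$-orbit of $y$ to $X$ occur precisely at the times $0,\ h(y),\ h(y)+h(f_Xy),\ \dots,\ \sum_{i=0}^{\sigma(y)-2}h(f_X^iy)$, with the next hit occurring exactly at time $\sum_{i=0}^{\sigma(y)-1}h(f_X^iy)=\varphi(y)$. Thus the orbit hits $X$ exactly $\sigma(y)$ times during $[0,\varphi(y)-1]$, and the desired bound follows.

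No step here is really an obstacle; the only thing to verify carefully is the block/hit bookkeeping and the fact that each block is counted at most once regardless of how many hits it contains (which is what produces the inequality rather than equality when $d\ge2$).
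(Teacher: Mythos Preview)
Your proof is correct and follows essentially the same approach as the paper's. Both arguments reduce to the identity $\sum_{\ell=0}^{\varphi(y)-1}1_{\{f^\ell y\in X\}}=\sigma(y)$ together with the observation that $1_{\{f^{d\ell}y\in X_d\}}\le\sum_{i=0}^{d-1}1_{\{f^{d\ell+i}y\in X\}}$; your ``block with at least one hit'' phrasing is just a verbal rendering of this inequality.
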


\begin{proof}
Let $y\in Y$.
Set $h_j(y)=\sum_{i=0}^{j-1}h(f_X^ix)$.
Since $h:X\to\Z^+$ is the first return time to $X$ under the map $f:M\to M$
we have that $f^\ell y\in X$ for some 
$\ell\ge 0$ precisely when
$\ell=h_j(y)$ for some $j\ge0$.
Since $\varphi(y)=h_{\sigma(y)}(y)$,
there are precisely $\sigma(y)$ returns of $y$ to $X$ under $f$ by time $\varphi(y)$.
Hence 
$\sum_{\ell=0}^{\varphi(y)-1}1_{\{f^\ell y\in X\}} = \sigma(y)$ for $y\in Y$.

Finally,
\[
\sum_{\ell=0}^{\Phi(y)-1}1_{\hX}(y,\ell) = 
\sum_{\ell=0}^{\Phi(y)-1}1_{\{f^{\ell d}y\in X_d\}}
\le \sum_{\ell=0}^{\Phi(y)-1}\sum_{i=0}^{d-1}1_{\{f^{\ell d+i}y\in X\}}
=\sum_{\ell=0}^{\varphi(y)-1}1_{\{f^\ell y\in X\}},
\]
and the result follows.
\end{proof}

Fix $\theta\in(0,1)$ and define
\begin{equation} \label{eq:norm1}
\|\tv\|_\theta=|\tv|_\infty+|\tv|_\theta, \quad 
|\tv|_\theta
 =\sup_{y,y'\in Y,\,y\neq y'}\sup_{0\le\ell\le \Phi(y)-1}\frac{|\tv(y,\ell)-\tv(y',\ell)|}{\theta^{s(y,y')}}.
\end{equation}
Let $\cF_\theta(\hX)$ denote the space of observables $\tv$ supported in $\hX$  with $\|\tv\|_\theta<\infty$.

Given $\tv,\tw\in L^\infty(\Delta)$, we define 
\[
\textstyle 
\rho^*_{\tv,\tw}(n)= \int_\Delta\tv\,\tw\circ f_\Delta^n\,d\mu_\Delta.
\]

\begin{thm} \label{thm:Delta}
There  is a constant $C>0$ such that for all $n\ge1$,
\begin{itemize}
\item[(a)]
$\Big|\rho^*_{\tv,\tw}(n)-(1+\bPhi^{-1}\sum_{j>n} \mu_Y(\Phi>j))\int_\Delta\tv\,d\mu_\Delta\int_\Delta\tw\,d\mu_\Delta\Big|
\le C\|\tv\|_\theta|\tw|_\infty(\gamma_{nd}+\zeta_{\beta'}(n))$
for all $\tv\in \cF_\theta(\hX)$, $\tw\in L^\infty(\hX)$,
\item[(b)]
$|\rho^*_{\tv,\tw}(n)|\le C\|\tv\|_\theta|\tw|_\infty\gamma_{nd}$ for all
$\tv\in \cF_\theta(\hX)$ with $\int_\Delta \tv\,d\mu_\Delta=0$, $\tw\in L^\infty(\Delta)$.
\end{itemize}
\end{thm}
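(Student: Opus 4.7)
The approach is operator renewal theory in the spirit of~\cite{Gouezel04a,Sarig02}, adapted via the mechanism of~\cite{BruinTerhesiu18} to handle the facts that $\varphi$ is an induced (not first) return and that tower observables are supported on multiple levels. Writing $\rho^*_{\tv,\tw}(n) = \int_\Delta \tw\cdot \mathcal{L}^n\tv\,d\mu_\Delta$ where $\mathcal{L}$ is the transfer operator of $f_\Delta$, I decompose $\tv, \tw$ into level-slices $\tv_\ell(y) = \tv(y,\ell)$ and $\tw_m(y) = \tw(y,m)$, supported on $\{\Phi(y) > \ell\}$ and $\{\Phi(y) > m\}$ respectively.

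For each pair $(\ell,m)$, the level-to-level correlation reduces, after shifting along the tower, to an inner product on $Y$ against the renewal operator at time $k\approx nd - \ell + m$,
\[
R_k = \sum_{j\ge 1}\sum_{n_1+\cdots+n_j = k} T_{n_1}\cdots T_{n_j}, \qquad T_i g = T(1_{\{\Phi = i\}} g),
\]
where $T$ is the transfer operator of $F=f_\Delta^\Phi$ on $Y$. The key input is the Gouezel--Sarig renewal asymptotic on the Gibbs-Markov space $\cF_\theta(Y)$,
\[
R_k = \bPhi^{-1}P + \bPhi^{-1}\Bigl(\sum_{j>k}\mu_Y(\Phi>j)\Bigr)P + E_k, \qquad \|E_k\|_{\cF_\theta\to\cF_\theta} = O(\zeta_{\beta'}(k)),
\]
with $Pg = (\int_Y g\,d\mu_Y)\mathbf{1}$.

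For the slice pair $(\ell,m)=(0,0)$, the leading term $\bPhi^{-1}P$ produces exactly the principal term $(1+\bPhi^{-1}\sum_{j>n}\mu_Y(\Phi>j))\int\tv_0\int\tw_0$ after accounting for the normalisation $\bPhi^{-1}$ in $d\mu_\Delta$, and the $E_k$ piece contributes the $\zeta_{\beta'}(n)$ error. For the remaining pairs $(\ell,m)$, Proposition~\ref{prop:hX} bounds the number of nonzero slices of $\tv$ and $\tw$ over a fibre $y$ by $\sigma(y)$. Summing the higher-slice contributions against $R_{k}$ (using both the main $\bPhi^{-1}P$ part and the tail $\mu_Y(\Phi>k)$ correction) produces precisely the convolution $n^{-\beta'}\star\sigma_n = \gamma_n$; the shift by $d$ comes from the reparametrisation $\Phi = \varphi/d$, giving $\gamma_{nd}$. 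Together these yield part~(a).

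Part~(b) follows by noting that the mean-zero assumption $\int_\Delta\tv\,d\mu_\Delta = 0$ translates to $\sum_\ell \int_Y \tv_\ell\,d\mu_Y = 0$, which annihilates the entire $\bPhi^{-1}P$ contribution after summing over $\ell$. Only the tail-correction and error terms remain, all controlled by $\gamma_{nd}$; moreover, because the remaining expression is $\mathcal{L}^n\tv$ integrated against $\tw$ without any geometric support requirement on $\tw$, the $L^\infty$ norm suffices and $\tw$ need not live in $\hX$. The main obstacle is part~(a) for the higher-slice contributions: securing the sharp bound $\gamma_{nd}+\zeta_{\beta'}(n)$ rather than a cruder product requires carefully exploiting the convolution structure of the renewal equation against the tail $\mu_Y(\Phi>k)$, combining the non-first-return mechanism of~\cite{BruinTerhesiu18} with the classical Gouezel--Sarig identities in an essential way.
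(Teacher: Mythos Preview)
Your overall strategy---renewal theory on $Y$, the Gou\"ezel--Sarig asymptotic for the renewal operator, and the bound $\sum_\ell 1_{\hX}(y,\ell)\le\sigma(y)$ from Proposition~\ref{prop:hX}---is exactly what the paper uses. But the ``level-slice'' decomposition as you describe it does not work: for a fixed pair $(\ell,m)$, the renewal time is not ``$k\approx nd-\ell+m$'' but rather $n-(\Phi(y)-\ell)-m$, which depends on the starting base point $y$ through $\Phi(y)$. Consequently there is no single renewal operator attached to a pair $(\ell,m)$, and your claim that the pair $(0,0)$ alone produces the principal term $\int\tv\int\tw$ is wrong---every level contributes to $\int_\Delta\tv\,d\mu_\Delta$.

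The paper repairs this by indexing by \emph{time} rather than level: set $V(i)(y)=1_{\{\Phi(y)\ge i\}}\tv(y,\Phi(y)-i)$ and $W(m)(y)=1_{\{\Phi(y)>m\}}\tw(y,m)$, and derive the Pollicott-type convolution identity $\rho^*_{\tv,\tw}(n)=J_0(n)+\bPhi^{-1}\int_Y (T\star RV\star W)(n)\,d\mu_Y$ (Proposition~\ref{prop:conv}). Substituting $T(n)=b(n)P+H(n)$ isolates the main piece $E(n)=b(n)\star PV(n)\star PW(n)$. The step you are missing is then purely algebraic: passing to Fourier series and using the identities $\hV(z)-\hV(1)=(z-1)\hA_1(z)$, $\hW(z)-\hW(1)=(z-1)\hA_2(z)$ (Proposition~\ref{prop:AA}) splits $\hE(z)$ into $\hat b(z)P\hV(1)P\hW(1)$ plus terms whose Fourier coefficients are $O(n^{-\beta'}\star\sigma_{nd})=O(\gamma_{nd})$ via Lemma~\ref{lem:JA}. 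This is how the sharp $\gamma_{nd}+\zeta_{\beta'}(n)$ bound comes out; your sketch does not supply a mechanism for it. For part~(b) your intuition is right: the paper uses $P\hV(1)=\int_\Delta\tv\,d\mu_\Delta=0$, which kills precisely the $\hA_2$ term---the only place where support of $\tw$ in $\hX$ was used---so $\tw\in L^\infty(\Delta)$ suffices.
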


We conclude this subsection by showing that Theorem~\ref{thm:main} is a direct consequence of Theorem~\ref{thm:Delta}.

\begin{pfof}{Theorem~\ref{thm:main}}
Recall that $g=f^d$.
Write $n=md-r$ for $m\ge1$, $0\le r\le d-1$.  
Using the measure-preserving semiconjugacy
$\pi_M:\Delta\to M$,
$\pi_M(y,\ell)=g^\ell y$, 
we can write 
\begin{align} \label{eq:main} \nonumber
\rho_{v,w}(n) &\textstyle =\int_M v\circ f^r\,w\circ g^m\,d\mu-\int_M v\,d\mu\int_M w\,d\mu
\\[.75ex] &\textstyle =\rho^*_{\widetilde{v\circ f^r},\tw}(m)-\int_\Delta \tv\,d\mu_\Delta\int_\Delta \tw\,d\mu_\Delta.
\end{align}

Suppose as in part~(a) that $v\in \cH(X)$ and $w\in L^\infty(X)$.
Then $\supp \widetilde{v\circ f^r}\subset \pi_M^{-1} f^{-r}X\subset \pi_M^{-1} X_d=\hX$ and
$\supp \tw\subset \pi_M^{-1}X\subset X_d$.  
Moreover, 
 \begin{align*}
|\widetilde{v\circ f^r}|_\theta
 & =\sup_{y,y'\in Y,\,y\neq y'}\sup_{0\le\ell\le \Phi(y)-1}|v(f^{d\ell+r}y)-v(f^{d\ell+r}y')|/\theta^{s(y,y')}
 \\ & \le \sup_{y,y'\in Y,\,y\neq y'}\sup_{0\le\ell\le \varphi(y)-1}|v(f^\ell y)-v(f^\ell y')|/\theta^{s(y,y')}
= {|v|}_{\cH}.
\end{align*}
Hence it follows from~\eqref{eq:main} and Theorem~\ref{thm:Delta}(a) that
\begin{align*}
 \Big|\rho_{v,w}(n) & \textstyle -\bPhi^{-1}\sum_{j>m} \mu_Y(\Phi>j)\int_Mv\,d\mu\int_Mw\,d\mu\Big|
 \\ 
& =  \textstyle \Big|\rho^*_{\widetilde{v\circ f^r},\tw}(m)-(1+\bPhi^{-1}\sum_{j>m} \mu_Y(\Phi>j))\int_\Delta\tv\,d\mu_\Delta\int_\Delta\tw\,d\mu_\Delta\Big|
\\ & \ll\|\widetilde{v\circ f^r}\|_\theta|\tw|_\infty(\gamma_{md}+\zeta_{\beta'}(m))
 \le{\|v\|}_{\cH}|w|_\infty(\gamma_{md}+\zeta_{\beta'}(m)).
\end{align*}
Now $\gamma_{md}=\gamma_{n+r}\le\gamma_n$ and $\zeta_{\beta'}(m)\ll \zeta_{\beta'}(n)$.
Moreover,
\[
\bPhi^{-1}\sum_{j>m} \mu_Y(\Phi>j)=\bvarphi^{-1}d\sum_{j>(n+r)/d}\mu_Y(\varphi>jd)
=\bvarphi^{-1}d\sum_{j>[n/d]}\mu_Y(\varphi>jd)+O(n^{-\beta'}).
\]
By monotonicity of $\mu_Y(\varphi>k)$,
\[
\sum_{k=jd}^{jd+d-1}\mu_Y(\varphi>k)\le
d\mu_Y(\varphi>jd)\le \sum_{k=jd-d}^{jd-1}\mu_Y(\varphi>k).
\]
Summing over $j$ yields
\[
\sum_{k\ge ([n/d]+1)d}\mu_Y(\varphi>k)\le d\sum_{j>[n/d]}\mu_Y(\varphi>jd)
\le \sum_{k\ge [n/d]d}\mu_Y(\varphi>k),
\]
and hence $d\sum_{j>[n/d]}\mu_Y(\varphi>jd)=\sum_{k>n}\mu_Y(\varphi>k)+O(n^{-\beta'})$.
This completes the proof of part~(a).

Similarly, in the context of part~(b),
\[
|\rho_{v,w}(n)|=|\rho^*_{\widetilde{v\circ f^r},\tw}(m)|
\ll \|\widetilde{v\circ f^r}\|_\theta |\tw|_\infty \gamma_{md}
\le {\|v\|}_{\cH}|w|_\infty \gamma_n
\]
by Theorem~\ref{thm:Delta}(b).
\end{pfof}

\subsection{Operator renewal theory on $Y$}
\label{sec:TR}

Set $\D=\{z\in\C:|z|<1\}$ and $\barD=\{z\in\C:|z|\le1\}$.
Let $R:L^1(Y)\to L^1(Y)$ and $L:L^1(\Delta)\to L^1(\Delta)$ denote the transfer operators for $F:Y\to Y$ and $f_\Delta:\Delta\to\Delta$ respectively.
Define the renewal operators $R(n),\,T(n):L^1(Y)\to L^1(Y)$, 
\[
R(n)v=R(1_{\{\Phi=n\}}v),\;n\ge1,\qquad
T(n)v=1_Y L^n(1_Y v),\,n\ge0,
\]
and the corresponding Fourier series
$\hR(z),\,\hT(z):L^1(Y)\to L^1(Y)$, for $z\in\D$,
\[
\hR(z)=\sum_{n=1}^\infty R(n)z^n,\qquad
\hT(z)=\sum_{n=0}^\infty T(n)z^n.
\]
A calculation shows that 
\[
\hR(z)v=R(z^\Phi v)\;
\text{for $z\in\barD\quad$ and}\quad \hT=(I-\hR)^{-1}\;\text{on $\D$}.
\]
Also, for $z\in\D$, we define
\[
\hB(z):L^1(Y)\to L^1(Y), \qquad \hB(z)=(z-1)\hT(z),
\]
with Fourier coefficients $B(n)$, $n\ge0$.

Given $v:Y\to\R$, define $|v|_\theta=\sup_{y\neq y'}|v(y)-v(y')|/\theta^{s(y,y')}$ and
$\|v\|_\theta=|v|_\infty+|v|_\theta$.
Let $\cF_\theta(Y)$ be the Banach space of observables $v$ with
$\|v\|_\theta<\infty$.
Since $F:Y\to Y$ is a Gibbs-Markov map
and $\Phi:Y\to\Z^+$ is constant on partition elements,
the operators $R$, $R(n)$, $T(n)$, $\hR(z)$ and $\hT(z)$ are bounded operators on $\cF_\theta(Y)$.
Define $Pv=\bPhi^{-1}\int_Y v\,d\mu_Y$.

Define $\zeta_{\beta'}$ as in the introduction.
Since $F$ is mixing and
$\gcd\{\Phi(a):a\in\alpha\}=1$, it follows from~\cite{Gouezel04a} that
on $\cF_\theta(Y)$:
\begin{lemma} \label{lem:Tn}
\begin{itemize}
\item[(a)] $T(n)=b(n)P+H(n)$
where $b(n)=1+\bPhi^{-1}\sum_{j>n}\mu_Y(\Phi>j)$ and
$\|H(n)\|_\theta=O(\zeta_{\beta'}(n))$.
\item[(b)] There is a sequence $\tilde b(n)>0$ such that
$T(n)=\tilde b(n)P+\tH(n)$ where $\|\tH(n)\|_\theta=O(n^{-\beta'})$.
\item[(c)] 
$\|B(n)\|_\theta=O(n^{-\beta'})$.
\qed
\end{itemize}
\end{lemma}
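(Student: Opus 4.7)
The plan is to invoke Gou\"ezel's operator renewal theorem~\cite{Gouezel04a} directly. First I verify its hypotheses in the present setting: since $F:Y\to Y$ is a full-branch mixing Gibbs-Markov map on $(Y,\mu_Y)$, the transfer operator $R$ is quasi-compact on $\cF_\theta(Y)$ with $1$ as a simple isolated eigenvalue and spectral projection $P$. The identity $\hR(z)v=R(z^\Phi v)$, valid because $\Phi$ is constant on partition elements, together with the aperiodicity $\gcd\{\Phi(a):a\in\alpha\}=1$, rules out other eigenvalues of modulus one for $\hR(z)$ on $\barD$. Combined with the tail bound $\mu_Y(\Phi>n)=O(n^{-\beta'})$, $\beta'>1$, this places us in the polynomial-tail regime treated in~\cite{Gouezel04a}.

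Part (a) is then a direct quotation of Gou\"ezel's renewal expansion for the Fourier coefficients of $\hT=(I-\hR)^{-1}$: the simple zero of $1-\hR(z)$ at $z=1$ produces a leading rank-one contribution $b(n)P$ whose scalar $b(n)=1+\bPhi^{-1}\sum_{j>n}\mu_Y(\Phi>j)$ records the tail of $\Phi$, with an operator remainder $H(n)$ bounded by the three-case rate $\zeta_{\beta'}(n)$. Part (b) is obtained by absorbing the scalar drift that remains in $H(n)$ into a refined rank-one coefficient $\tilde b(n)$ --- essentially chosen so that $T(n)-\tilde b(n)P$ has vanishing integral against the invariant density --- after which the residual operator $\tH(n)$ has no rank-one overlap and inherits the genuine tail rate $O(n^{-\beta'})$. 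This refinement is again part of the standard operator renewal package in~\cite{Gouezel04a}, so both (a) and (b) amount to applications of that theorem.

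Part (c) then follows formally from (b). Expanding $\hB(z)=(z-1)\hT(z)$ in power series yields $B(0)=-T(0)$ and $B(n)=T(n-1)-T(n)$ for $n\ge1$; substituting (b) gives $B(n)=[\tilde b(n-1)-\tilde b(n)]P+[\tH(n-1)-\tH(n)]$, and both terms are $O(n^{-\beta'})$ --- the scalar difference because $\tilde b$ has increments of order $\mu_Y(\Phi>n)$, and the operator difference directly from the bound on $\tH$ in (b). The main obstacle, hidden behind the citation to~\cite{Gouezel04a}, is part (b): separating the scalar leading-order contribution of $H(n)$ from a genuine operator remainder is the technical heart of operator renewal theory and is exactly what yields the sharper rate $n^{-\beta'}$ when $\beta'\le 2$, where $\zeta_{\beta'}(n)$ strictly dominates $n^{-\beta'}$. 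Parts (a) and (c) are essentially formal consequences once (b) is in hand.
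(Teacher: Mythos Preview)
Your proposal is essentially the paper's own approach: the lemma is stated with a $\qed$ and no proof, preceded only by the sentence ``it follows from~\cite{Gouezel04a}''.  So citing Gou\"ezel after verifying the Gibbs--Markov/aperiodicity/tail hypotheses is exactly what the authors do, and your expansion of that citation is accurate for parts~(a) and~(b).

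One small point on your derivation of~(c) from~(b).  You write $B(n)=[\tilde b(n-1)-\tilde b(n)]P+[\tH(n-1)-\tH(n)]$ and then claim the scalar increment is $O(\mu_Y(\Phi>n))$.  That increment bound is \emph{not} part of the statement of~(b) as given --- (b) only asserts existence of \emph{some} positive sequence $\tilde b(n)$ making $\|\tH(n)\|_\theta=O(n^{-\beta'})$, with no control on $\tilde b(n)-\tilde b(n-1)$.  In Gou\"ezel's construction $\tilde b(n)$ is explicit (built from the scalar renewal sequence) and the increment bound does hold, but you should either cite~\cite{Gouezel04a} for~(c) directly, as the paper does, or make explicit that you are using the specific $\tilde b$ produced there rather than the abstract statement~(b).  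Indeed, the paper later (proof of Theorem~\ref{thm:Delta}(b)) uses~(b) and~(c) \emph{together} to bound the coefficients of $(z-1)\widehat{\tilde b}(z)P$, which would be circular if~(c) were a formal consequence of~(b) alone.
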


\subsection{Proof of Theorem~\ref{thm:Delta}}
\label{sec:X}

Let $\tv,\tw\in L^\infty(\Delta)$.  Define $V(n),\,W(n):Y\to\R$, 
\[
V(n)(y)=1_{\{\Phi(y)\ge n\}}\tv(y,\Phi(y)-n),\,n\ge1,
\qquad
W(n)(y)= 1_{\{\Phi(y)>n\}}\tw(y,n), \, n\ge0,
\]
as well as
\[
J_0(n)  =\int_\Delta 1_{\{n+\ell< \Phi(y)\}}\tv(y,\ell)\tw(y,n+\ell)\,d\mu_\Delta.
\]
The following formula is a discrete time analogue of a formula in~\cite{Pollicott85}.
(The proof is in the Appendix.)

\begin{prop} \label{prop:conv}
$\rho^*_{\tv,\tw}(n)=J_0(n)+\bar\Phi^{-1}\int_Y
(T(n)\star RV(n))\star W(n)\,d\mu_Y$
for all $\tv,\tw\in L^\infty(\Delta)$, $n\ge1$.
\footnote{Extending the convention mentioned in the introduction, $T(n)\star RV(n)=\sum_{j=0}^n T(j)RV(n-j)$.
Similarly, $(T(n)\star RV(n))\star W(n)=\sum_{j=0}^n G(j)W(n-j)$ where
$G(n)=T(n)\star RV(n)$.}  \qed
\end{prop}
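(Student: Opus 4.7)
The plan is to unfold $\rho^*_{\tv,\tw}(n)$ as a sum over columns of the tower and then decompose the orbit of each $(y,\ell)$ according to how many times it visits the base $Y\times\{0\}$ within the first $n$ iterates. Starting from
\[
\rho^*_{\tv,\tw}(n)=\bPhi^{-1}\int_Y\sum_{\ell=0}^{\Phi(y)-1}\tv(y,\ell)\,\tw(f_\Delta^n(y,\ell))\,d\mu_Y,
\]
I would split the inner sum into the two cases $\ell+n<\Phi(y)$ and $\ell+n\ge\Phi(y)$. The first case never leaves the column over $y$, so $\tw(f_\Delta^n(y,\ell))=\tw(y,\ell+n)$ and this contribution is exactly $J_0(n)$.

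For the second case, reparameterize by $j=\Phi(y)-\ell\in[1,\min(n,\Phi(y))]$. Then $\tv(y,\Phi(y)-j)=V(j)(y)$ and a short check shows $f_\Delta^j(y,\Phi(y)-j)=(Fy,0)$, so $f_\Delta^n(y,\Phi(y)-j)=f_\Delta^{n-j}(Fy,0)$. Applying the transfer operator identity $\int_Y u\cdot g\circ F\,d\mu_Y=\int_Y Ru\cdot g\,d\mu_Y$, this contribution becomes
\[
\sum_{j=1}^n\int_Y RV(j)(y)\,\tw(f_\Delta^{n-j}(y,0))\,d\mu_Y.
\]

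The crux of the proof is the identity
\[
\int_Y u(y)\,\tw(f_\Delta^m(y,0))\,d\mu_Y=\sum_{i=0}^m\int_Y T(i)u\cdot W(m-i)\,d\mu_Y \qquad(u\in L^\infty(Y),\;m\ge0),
\]
which I would prove as follows. Trace the orbit of $(y,0)$: it visits $Y\times\{0\}$ precisely at the Birkhoff times $\Phi_k(y)=\sum_{r<k}\Phi(F^r y)$, landing at $(F^ky,0)$. If $k$ is the unique index with $\Phi_k(y)\le m<\Phi_{k+1}(y)$, then $f_\Delta^m(y,0)=(F^ky,\,m-\Phi_k(y))$, which can be written
\[
\tw(f_\Delta^m(y,0))=\sum_{k\ge 0}1_{\{\Phi_k(y)\le m\}}\,W(m-\Phi_k(y))(F^k y),
\]
since the upper bound $m<\Phi_{k+1}(y)$ is built into the indicator in $W$. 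Inserting this, splitting $1_{\{\Phi_k(y)\le m\}}=\sum_{i=0}^m 1_{\{\Phi_k(y)=i\}}$, and applying the transfer-operator identity $k$ times yields
\[
\int_Y u\,\tw(f_\Delta^m(\cdot,0))\,d\mu_Y=\sum_{i=0}^m\int_Y\Big(\sum_{k\ge 0}R^k(u\,1_{\{\Phi_k=i\}})\Big)W(m-i)\,d\mu_Y,
\]
and the inner sum is precisely $T(i)u$ (one verifies this by matching against the defining identity $T(i)u=1_Y L^i(1_Y u)$, or equivalently by checking the generating function $\sum_i T(i)u\,z^i=(I-\hR(z))^{-1}u$).

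Plugging this identity into Part~2 with $u=RV(j)$ and changing the order of summation via $m=i+j$ gives
\[
\sum_{j=1}^n\sum_{m=j}^n\int_Y T(m-j)RV(j)\,W(n-m)\,d\mu_Y=\sum_{m=0}^n\sum_{j=0}^m\int_Y T(j)RV(m-j)\,W(n-m)\,d\mu_Y,
\]
where I have used the convention $V(0)\equiv 0$ (so $RV(0)=0$) to freely extend the range of summation. This is exactly $\int_Y ((T\star RV)\star W)(n)\,d\mu_Y$, and combining with the $J_0(n)$ contribution and the factor $\bPhi^{-1}$ yields the claimed formula. The main potential obstacle is purely bookkeeping: keeping the three convolution indices and the range conventions ($V(0)=0$ versus the stated sums starting at $0$ or $1$) straight.
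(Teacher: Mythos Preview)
Your argument is correct. Both your proof and the paper's proof in Appendix~\ref{app:conv} decompose $\rho^*_{\tv,\tw}(n)$ according to the number of returns to the base $Y$, but the routes diverge after that. The paper passes to generating functions: it computes $\hJ_k(z)$ for each $k\ge1$, obtains $\hJ_k(z)=\bPhi^{-1}\int_Y \hR(z)^{k-1}R\hV(z)\cdot\hW(z)\,d\mu_Y$, and then sums over $k$ using $\hT=(I-\hR)^{-1}$ before reading off coefficients. You instead work directly at the level of sequences, proving the coefficient identity $\int_Y u\,\tw(f_\Delta^m(\cdot,0))\,d\mu_Y=\sum_i\int_Y T(i)u\cdot W(m-i)\,d\mu_Y$ via the pointwise formula $T(i)u=\sum_{k\ge0}R^k(1_{\{\Phi_k=i\}}u)$. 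The paper explicitly acknowledges your route as an alternative (``One method would be to check equality of coefficients directly, but we choose to convert all sequences into Fourier series''). Your approach is arguably more elementary and keeps the combinatorics visible; the generating-function approach packages the renewal structure $\hT=(I-\hR)^{-1}$ more cleanly and avoids the triple-index bookkeeping you flag at the end. Either way, the convention $V(0)\equiv0$ (which you correctly invoke) is what makes the convolution ranges line up with the statement.
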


\begin{prop} \label{prop:Vn}
$|V(n)|_1\le  |\tv|_\infty\, \mu_Y(\Phi\ge n)$ and
$|W(n)|_1\le  |\tw|_\infty\, \mu_Y(\Phi>n)$
for all $\tv,\tw\in L^\infty(\Delta)$, $n\ge1$.
Moreover, there is a constant $C>0$ such that
$\|RV(n)\|_\theta\le C \|\tv\|_\theta\, \mu_Y(\Phi\ge n)$ 
for all $\tv\in\cF_\theta(\Delta)$, $n\ge1$.
\end{prop}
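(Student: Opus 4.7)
The plan is to dispatch the two $L^1$ bounds immediately, and then focus on the Gibbs--Markov estimate for $\|RV(n)\|_\theta$. For the $L^1$ bounds, I would simply observe that $|V(n)(y)|\le|\tv|_\infty\,1_{\{\Phi(y)\ge n\}}$ and $|W(n)(y)|\le|\tw|_\infty\,1_{\{\Phi(y)>n\}}$ pointwise on $Y$, and integrate against $\mu_Y$.

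For $\|RV(n)\|_\theta$, the key point is that $\Phi$ is constant on each partition element $a\in\alpha$. Writing the transfer operator as a sum over branches, $RV(n)(y)=\sum_{a\in\alpha}\xi(y_a)V(n)(y_a)$ with $y_a=(F|_a)^{-1}y$, only the terms with $\Phi(a)\ge n$ contribute, and then $V(n)(y_a)=\tv(y_a,\Phi(a)-n)$. The sup-norm bound follows at once from $\xi(y_a)\le C\mu_Y(a)$ in~\eqref{eq:GM}:
\[
|RV(n)|_\infty\le|\tv|_\infty\sum_{a:\,\Phi(a)\ge n}C\mu_Y(a)=C|\tv|_\infty\,\mu_Y(\Phi\ge n).
\]

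For the H\"older seminorm, given distinct $y,y'\in Y$, I would split each summand of $RV(n)(y)-RV(n)(y')$ in the standard way as
\[
[\xi(y_a)-\xi(y'_a)]\,\tv(y_a,\Phi(a)-n)+\xi(y'_a)\,[\tv(y_a,\Phi(a)-n)-\tv(y'_a,\Phi(a)-n)].
\]
The first bracket is controlled by $C\mu_Y(a)|\tv|_\infty\theta^{s(y_a,y'_a)}$ via the variation estimate in~\eqref{eq:GM}; the second by $C\mu_Y(a)|\tv|_\theta\theta^{s(y_a,y'_a)}$, combining $\xi(y'_a)\le C\mu_Y(a)$ with the definition~\eqref{eq:norm1} of $|\tv|_\theta$ applied at the admissible height $\ell=\Phi(a)-n\in[0,\Phi(y_a)-1]$. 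Using the separation-time identity $s(y_a,y'_a)=s(y,y')+1$ to factor out $\theta^{s(y,y')}$, then summing over $a$ with $\Phi(a)\ge n$, gives $|RV(n)|_\theta\le C\|\tv\|_\theta\,\mu_Y(\Phi\ge n)$, which combines with the sup-norm bound to complete the proof.

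There is no genuine obstacle here: the whole argument is a single pass through the definitions, and the only subtle point is the separation-time identity $s(y_a,y'_a)=s(y,y')+1$, valid because $y_a,y'_a$ lie in the common partition element $a$ while $Fy_a=y$ and $Fy'_a=y'$.
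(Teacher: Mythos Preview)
Your proposal is correct and follows essentially the same argument as the paper: both dispatch the $L^1$ bounds pointwise, then write $RV(n)$ as a sum over inverse branches, bound the sup-norm using $\xi(y_a)\le C\mu_Y(a)$, and control the H\"older seminorm by the standard splitting into a $\xi$-variation term and a $V(n)$-variation term, each estimated via~\eqref{eq:GM} and the definition of $|\tv|_\theta$. The only difference is cosmetic: you make the identity $s(y_a,y'_a)=s(y,y')+1$ explicit, whereas the paper absorbs this step directly into the inequality $\theta^{s(y_a,y'_a)}\le\theta^{s(y,y')}$ without comment.
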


\begin{proof}
The estimates for $|V(n)|_1$ and $|W(n)|_1$ are immediate.
Let $y,y'\in a$, $a\in\alpha$.  Then
$|V(n)(y)|
 \le 1_{\{\Phi(a)\ge n\}}|\tv|_\infty$
and
\begin{align*}
|V(n)(y)-V(n)(y')| & = 1_{\{\Phi(a)\ge n\}}
|\tv(y,\Phi(a)-n)-\tv(y',\Phi(a)-n)|
\\ & \le 1_{\{\Phi(a)\ge n\}}|\tv|_\theta\,\theta^{s(y,y')}.
\end{align*}

Given $y\in Y$, set $y_a=F^{-1}(y)\cap a$.  Then
$(RV(n))(y)=\suma \xi(y_a)V(n)(y_a)$ so
\[
|RV(n)|_\infty\ll
\suma \mu_Y(a)|\tv|_\infty\, 1_{\{\Phi(a)\ge n\}}
=|\tv|_\infty\mu_Y(\Phi\ge n),
\]
by~\eqref{eq:GM}.
Also, for $y,y'\in Y$,
\begin{align*}
|(RV(n))(y)- & (RV(n))(y')|   
\\ & \le  
 \suma \big\{\xi(y_a)|V(n)(y_a)-V(n)(y'_a)|
+|\xi(y_a)-\xi(y'_a)||V(n)(y'_a)|\big\}
\\[.75ex] 
& \ll \theta^{s(y,y')}\|\tv\|_\theta \suma\mu_Y(a)1_{\{\Phi(a)\ge n\}}
= \theta^{s(y,y')}\|\tv\|_\theta\, \mu_Y(\Phi\ge n).
\end{align*}
Hence $|RV(n)|_\theta\ll \|\tv\|_\theta\, \mu_Y(\Phi\ge n)$ and the estimate for $\|RV(n)\|_\theta$ follows.  
\end{proof}

\begin{cor} \label{cor:Hn}
Let $H(n),\,\tH(n):\cF_\theta(Y)\to\cF_\theta(Y)$ be as Lemma~\ref{lem:Tn}.
There is a constant $C>0$ such that
 \begin{itemize}
\item[(a)] $|\int_Y(H(n)\star RV(n))\star W(n)\,d\mu_Y|  \le C \|\tv\|_\theta |\tw|_\infty\, \zeta_{\beta'}(n)$,
\item[(b)] $|\int_Y(\tH(n)\star RV(n))\star W(n)\,d\mu_Y|  \le C \|\tv\|_\theta |\tw|_\infty\, n^{-\beta'}$,
\end{itemize}
for all $\tv\in\cF_\theta(\Delta)$, $\tw\in L^\infty(\Delta)$, $n\ge1$.
\end{cor}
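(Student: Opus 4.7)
The plan is to reduce both inequalities to standard convolution estimates for polynomially decaying sequences, using only the operator bounds supplied by Lemma~\ref{lem:Tn} and the function-level bounds supplied by Proposition~\ref{prop:Vn}. Throughout, we work in $\cF_\theta(Y)$, and in particular use that for any operator $A:\cF_\theta(Y)\to\cF_\theta(Y)$ and any $u\in\cF_\theta(Y)$, $|Au|_\infty\le\|Au\|_\theta\le\|A\|_\theta\|u\|_\theta$.

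For part (a), set $G(j)=\sum_{i=0}^j H(i)RV(j-i)$, so that
\[
\Big|\int_Y (H(n)\star RV(n))\star W(n)\,d\mu_Y\Big|
=\Big|\sum_{j=0}^n\int_Y G(j)W(n-j)\,d\mu_Y\Big|
\le \sum_{j=0}^n |G(j)|_\infty|W(n-j)|_1.
\]
First I would bound $|G(j)|_\infty$ term by term, using the operator bound $\|H(i)\|_\theta\ll\zeta_{\beta'}(i)$ from Lemma~\ref{lem:Tn}(a) and the estimate $\|RV(j-i)\|_\theta\ll\|\tv\|_\theta\,\mu_Y(\Phi\ge j-i)\ll\|\tv\|_\theta(j-i)^{-\beta'}$ from Proposition~\ref{prop:Vn}. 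This yields
\[
|G(j)|_\infty\ll\|\tv\|_\theta\bigl(\zeta_{\beta'}(j)\star j^{-\beta'}\bigr).
\]
Combined with $|W(n-j)|_1\ll|\tw|_\infty(n-j)^{-\beta'}$, the whole expression is bounded by a triple convolution $\|\tv\|_\theta|\tw|_\infty\bigl(\zeta_{\beta'}(n)\star n^{-\beta'}\star n^{-\beta'}\bigr)$.

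The one real calculation is verifying $\zeta_{\beta'}(n)\star n^{-\beta'}\star n^{-\beta'}=O(\zeta_{\beta'}(n))$, using the convolution facts recalled in the Notation paragraph ($n^{-p}\star n^{-q}=O(n^{-q})$ for $p>1$, $q\in(0,p]$, with the logarithmic variant when needed). Splitting into cases: for $\beta'>2$, every factor is $n^{-\beta'}$ and the convolution rule applies directly with $p=q=\beta'$; for $\beta'=2$, the logarithmic version gives $n^{-2}\log n\star n^{-2}\star n^{-2}=O(n^{-2}\log n)$; for $1<\beta'<2$, one takes $q=2(\beta'-1)<\beta'=p$ and the rule yields $O(n^{-2(\beta'-1)})$. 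In each case the bound is $O(\zeta_{\beta'}(n))$, which is exactly what is needed.

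For part (b), I would carry out the identical argument with $\tH(i)$ in place of $H(i)$, now using $\|\tH(i)\|_\theta\ll i^{-\beta'}$ from Lemma~\ref{lem:Tn}(b). The resulting triple convolution is $n^{-\beta'}\star n^{-\beta'}\star n^{-\beta'}=O(n^{-\beta'})$ by two applications of the convolution rule with $p=q=\beta'>1$. The main obstacle, such as it is, lies in checking the $1<\beta'<2$ case of part (a), where the two polynomial tails that come from $RV(n)$ and $W(n)$ already saturate the exponent $\beta'$, and one must verify that convolving with the faster-decaying $\zeta_{\beta'}(n)=n^{-2(\beta'-1)}$ really does preserve the slower rate rather than produce a worse one — this is where the ordering of exponents in the convolution identity matters.
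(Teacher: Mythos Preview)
Your proof is correct and follows essentially the same approach as the paper: bound the integral by $\|H(n)\|_\theta\star\|RV(n)\|_\theta\star|W(n)|_1$, plug in the estimates from Lemma~\ref{lem:Tn} and Proposition~\ref{prop:Vn}, and reduce to the triple convolution $\zeta_{\beta'}(n)\star n^{-\beta'}\star n^{-\beta'}=O(\zeta_{\beta'}(n))$. The paper simply asserts this last convolution estimate without the case-by-case check you provide, so your version is a bit more explicit but otherwise identical.
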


\begin{proof}
By Proposition~\ref{prop:Vn} and the estimate for $H(n)$ in Lemma~\ref{lem:Tn}(a),
the first integral is estimated by
\begin{align*}
|H(n)\,\star\, RV(n)|_\infty\,\star\, |W(n)|_1
 & \le \|H(n)\|_\theta\,\star\,\|RV(n)\|_\theta\,\star\, |W(n)|_1
 \\ & \ll \zeta_{\beta'}(n)\,\star\, \|\tv\|_\theta\, n^{-\beta'}\star |\tw|_\infty\, n^{-\beta'}
\ll \|\tv\|_\theta |\tw|_\infty \,\zeta_{\beta'}(n).
\end{align*}
The second integral is estimated in the same way using Lemma~\ref{lem:Tn}(b).
\end{proof}

For $n\ge0$, define
\[
A_1(n)(y)=1_{\{\Phi(y)>n\}}\sum_{\ell=0}^{\Phi(y)-n-1}\tv(y,\ell), \qquad
A_2(n)(y)=\sum_{\ell=0}^{\Phi(y)-1}1_{\{n< \ell\}}\tw(y,\ell).
\]

\begin{lemma} \label{lem:JA}  
\begin{itemize}
\item[(a)] 
$|J_0(n)|\le \bPhi^{-1}|\tv|_\infty|\tw|_\infty \,\sigma_{nd}$
for all $\tv\in L^\infty(\hX)$, $\tw\in L^\infty(\Delta)$, 
\mbox{$n\ge1$}.
\item[(b)]
$|A_1(n)|_1 \le |\tv|_\infty\, \sigma_{nd}$
for all $\tv\in L^\infty(\hX)$, $n\ge1$.
\item[(c)]
$|A_2(n)|_1 \le |\tw|_\infty\, \sigma_{nd}$
for all $\tw\in L^\infty(\hX)$, $n\ge1$.
\end{itemize}
\end{lemma}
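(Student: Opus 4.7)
The three bounds all flow from the same pair of observations: the support hypothesis on $\tv$ (in (a) and (b)) or on $\tw$ (in (c)) lets me insert $1_{\hX}(y,\ell)$ into each summand, and the index constraint that makes the summand nonzero (namely $n+\ell<\Phi(y)$, or $\Phi(y)>n$, or $n<\ell\le\Phi(y)-1$) forces $\Phi(y)>n$, equivalently $\varphi(y)>nd$. Once these are in place, Proposition~\ref{prop:hX} converts the sum of $1_{\hX}$ indicators over a tower fibre into a factor of at most $\sigma(y)$, and the definition of $\sigma_{nd}$ finishes the job.

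For part~(a), I would unwrap $d\mu_\Delta=\bPhi^{-1}\,d\mu_Y\otimes\text{(counting)}$, pull $|\tv|_\infty|\tw|_\infty$ outside, and bound the integrand by $1_{\{\Phi>n\}}\sum_{\ell=0}^{\Phi(y)-1}1_{\hX}(y,\ell)$, which by Proposition~\ref{prop:hX} is at most $1_{\{\Phi>n\}}\sigma(y)$. Since $\{\Phi>n\}=\{\varphi>nd\}$, the surviving $\mu_Y$-integral equals $\int_Y \sigma 1_{\{\varphi>nd\}}\,d\mu_Y=\sigma_{nd}$, yielding the claimed $\bPhi^{-1}|\tv|_\infty|\tw|_\infty\sigma_{nd}$.

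Parts~(b) and~(c) are strictly simpler because there is no $\mu_\Delta$ to unwrap. In~(b), the truncated sum $\sum_{\ell=0}^{\Phi(y)-n-1}1_{\hX}(y,\ell)$ is dominated by the full sum $\sum_{\ell=0}^{\Phi(y)-1}1_{\hX}(y,\ell)\le\sigma(y)$, and the explicit factor $1_{\{\Phi>n\}}$ is already present in the definition of $A_1(n)$; integrating against $\mu_Y$ again gives $\sigma_{nd}$. In~(c), the combination $1_{\{n<\ell\}}$ with $\ell\le\Phi(y)-1$ manifestly forces $\Phi(y)>n$, after which the same bookkeeping applies.

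There is no real obstacle; the only point to watch is that the final bound involves $\sigma_{nd}$ rather than $\sigma_n$, which comes automatically from the rescaling $\Phi=d^{-1}\varphi$ via the identity $\{\Phi>n\}=\{\varphi>nd\}$. The support condition on $\tv$ or $\tw$ is essential: without it, one could only replace $\sum_\ell 1_{\hX}(y,\ell)$ by $\Phi(y)$ and would end up with the weaker quantity $\int_Y \Phi 1_{\{\Phi>n\}}\,d\mu_Y$ instead of $\sigma_{nd}$.
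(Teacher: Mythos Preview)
Your proposal is correct and follows essentially the same approach as the paper's proof: insert $1_{\hX}$ via the support hypothesis, extract the constraint $\Phi(y)>n$ (equivalently $\varphi(y)>nd$), apply Proposition~\ref{prop:hX} to bound the fibre sum by $\sigma(y)$, and integrate against $\mu_Y$ to obtain $\sigma_{nd}$. Your closing remark on why the support condition is essential is also accurate and mirrors the role it plays in the paper's argument.
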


\begin{proof}
Since $\tv$ is supported in $\hX$,
\begin{align*}
|J_0(n)|
& \le |\tv|_\infty|\tw|_\infty\int_{\Delta} 1_{\{n< \Phi(y)\}}1_{\hX}(y,\ell)\,d\mu_\Delta
\\ & = \bPhi^{-1}|\tv|_\infty|\tw|_\infty\int_Y 1_{\{\varphi(y)>nd\}}\sum_{\ell=0}^{\Phi(y)-1}1_{\hX}(y,\ell)\,d\mu_Y
\\ & \le \bPhi^{-1}|\tv|_\infty|\tw|_\infty\int_Y 1_{\{\varphi>nd\}}\sigma\,d\mu_Y
= \bPhi^{-1}|\tv|_\infty|\tw|_\infty
\sigma_{nd},
\end{align*}
by Proposition~\ref{prop:hX}.

 Next, 
$|A_1(n)(y)|\le 1_{\{\Phi(y)>n\}}\sum_{\ell=0}^{\Phi(y)-1}|\tv(y,\ell)|$
and
it again follows that
$\int_Y |A_1(n)|\,d\mu_Y \le |\tv|_\infty\int_Y 1_{\{\Phi>n\}}\sigma\,d\mu_Y
=|\tv|_\infty\,\sigma_{nd}$.
Similarly for $A_2(n)$.
\end{proof}

The Fourier series for $V(n)$, $W(n)$ are given by
\[
\hV(z)(y)=\sum_{\ell=0}^{\Phi(y)-1} z^{\Phi(y)-\ell}\tv(y,\ell), \qquad
\hW(z)(y)=\sum_{\ell=0}^{\Phi(y)-1} z^\ell\tw(y,\ell),\quad z\in\D.
\]

\begin{prop} \label{prop:AA}
$\hA_1(z)
=(z-1)^{-1}(\hV(z)-\hV(1))$,
and $\hA_2(z)= (z-1)^{-1}(\hW(z)-\hW(1))$ for $z\in\D$. 
\end{prop}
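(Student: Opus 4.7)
The plan is to verify both identities by a direct computation: expand each Fourier series using the definitions of $A_1(n)$ and $A_2(n)$, swap the order of summation, and collapse the inner sum using the finite geometric series formula. Since these identities are pointwise on $Y$, we fix $y \in Y$ and abbreviate $\Phi = \Phi(y)$.

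For $\hA_1$, observe that $A_1(n)(y)$ vanishes for $n \geq \Phi$, so
\[
\hA_1(z)(y) = \sum_{n=0}^{\Phi-1} z^n \sum_{\ell=0}^{\Phi-n-1} \tv(y,\ell).
\]
The constraint on the double sum is $0 \leq n$, $0 \leq \ell$, $n + \ell \leq \Phi - 1$, so swapping the order and summing the geometric series gives
\[
\hA_1(z)(y) = \sum_{\ell=0}^{\Phi-1} \tv(y,\ell) \sum_{n=0}^{\Phi-\ell-1} z^n = \sum_{\ell=0}^{\Phi-1} \tv(y,\ell)\, \frac{1-z^{\Phi-\ell}}{1-z}.
\]
On the other hand, from the definition of $\hV$,
\[
(z-1)^{-1}\bigl(\hV(z)(y) - \hV(1)(y)\bigr) = \sum_{\ell=0}^{\Phi-1} \frac{z^{\Phi-\ell} - 1}{z-1}\, \tv(y,\ell),
\]
which matches the previous expression.

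For $\hA_2$, the argument is symmetric: writing $A_2(n)(y) = \sum_{\ell=n+1}^{\Phi-1} \tw(y,\ell)$ and swapping the order of summation in $\hA_2(z)(y) = \sum_{n=0}^\infty z^n A_2(n)(y)$ yields
\[
\hA_2(z)(y) = \sum_{\ell=1}^{\Phi-1} \tw(y,\ell) \sum_{n=0}^{\ell-1} z^n = \sum_{\ell=1}^{\Phi-1} \tw(y,\ell)\, \frac{1-z^\ell}{1-z},
\]
while the right-hand side equals $\sum_{\ell=1}^{\Phi-1} \frac{z^\ell - 1}{z - 1}\tw(y,\ell)$ (the $\ell = 0$ term drops out of $\hW(z)(y) - \hW(1)(y)$), and the two agree.

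There is really no obstacle here; this is a bookkeeping computation. The only thing to be careful about is the indexing conventions, in particular making sure that the convention $V(n)$ is used for $n \geq 1$ and $W(n)$, $A_1(n)$, $A_2(n)$ for $n \geq 0$ is handled consistently, and that the boundary terms ($\ell = 0$ in $\hA_2$, and $n \geq \Phi$ in both sums) are correctly seen to vanish. Convergence of the power series for $z \in \D$ is automatic since every sum in sight is finite pointwise on $Y$.
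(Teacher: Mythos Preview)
Your proof is correct and follows essentially the same approach as the paper: both expand the Fourier series, interchange the order of summation over $n$ and $\ell$, and collapse the inner geometric sum. You have in fact written out the $\hA_2$ case explicitly, whereas the paper just says ``the calculation for $A_2$ is similar.''
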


\begin{proof}
We have
\begin{align*}
\hA_1(z)(y) & =\sum_{n=0}^\infty z^nA_1(n)(y)
=\sum_{n=0}^{\Phi(y)-1} z^n\sum_{\ell=0}^{\Phi(y)-n-1}\tv(y,\ell)
=\sum_{\ell=0}^{\Phi(y)-1} \Big(\sum_{n=0}^{\Phi(y)-\ell-1}z^n\Big)\tv(y,\ell)
\\ & =(z-1)^{-1}\sum_{\ell=0}^{\Phi(y)-1}(z^{\Phi(y)-\ell}-1)\tv(y,\ell)
=(z-1)^{-1}(\hV(z)(y)-\hV(1)(y)).
\end{align*}
The calculation for $A_2$ is similar.
\end{proof}

\begin{pfof}{Theorem~\ref{thm:Delta}}
By Lemma~\ref{lem:Tn}(a) and Proposition~\ref{prop:conv},
\[
\rho^*_{\tv,\tw}(n)=J_0(n)+E(n)
+ \bPhi^{-1}\int_Y (H(n)\star RV(n)) \star W(n)\,d\mu_Y,
\]
where
\[
E(n)=\bPhi^{-1}\int_Y (b(n) P\star RV(n))\star W(n)\,d\mu_Y,
\qquad b(n)=1+\bPhi^{-1}\sum_{j>n}\mu_Y(\Phi>j).
\]
By Corollary~\ref{cor:Hn}(a) and Lemma~\ref{lem:JA}(a), 
\begin{equation} \label{eq:proof}
\rho^*_{\tv,\tw}(n)=E(n)+O\big(\|\tv\|_\theta|\tw|_\infty(\sigma_{nd}+\zeta_{\beta'}(n))\big).
\end{equation}

Now, $E(n)= b(n)\star PV(n)\star PW(n)$ so
\begin{align*}
\hE(z) 
& = \hat b(z)P\hV(z)P\hW(z) \\
& =\hat b(z)\Big\{P \hV(1)P \hW(1) 
+P\hV(1)P(\hW(z)-\hW(1)) 
+P(\hV(z)-\hV(1))P \hW(z) \Big\}
\\ & 
=\hat b(z)P\hV(1) P\hW(1)
+(z-1)\hat b(z)\big\{ P\hV(1) P\hA_2(z)
+P\hA_1(z)P\hW(z)\big\}.
\end{align*}
Moreover, $(z-1)\hat b(z)=-b(1)z+\sum_{n=2}^\infty (b(n-1)-b(n))z^n$
with Fourier coefficients that are $O(\mu_Y(\Phi>n))$.
Hence it follows from Proposition~\ref{prop:Vn} and
Lemma~\ref{lem:JA} that
\[
E(n)  = b(n)P\hV(1)P\hW(1)+ O(|\tv|_\infty|\tw|_\infty\, n^{-\beta'}\star\sigma_{nd}),
\]
and we obtain
\[
\rho^*_{\tv,\tw}(n)=b(n)P\hV(1)P\hW(1)+O\big(\|\tv\|_\theta|\tw|_\infty(\gamma_{nd}+\zeta_{\beta'}(n))\big).
\]

Also, 
\[
P\hV(1)=\bPhi^{-1}\int_Y \hV(1)\,d\mu_Y
=\bPhi^{-1}\int_Y \sum_{\ell=0}^{\Phi(y)-1}\tv(y,\ell)\,d\mu_Y(y)
=\int_\Delta \tv\,d\mu_\Delta,
\]
and similarly $P\hW(1)=\int_\Delta \tw\,d\mu_\Delta$.
This completes the proof of part~(a).

The proof of part~(b) proceeds in much the same way but with $b(n)$ and $H(n)$ replaced by $\tilde b(n)$ and $\tH(n)$ from Lemma~\ref{lem:Tn}(b).  Using Corollary~\ref{cor:Hn}(b) instead of
Corollary~\ref{cor:Hn}(a), we obtain 
\[
\rho^*_{\tv,\tw}(n)=\tE(n)+ O(\|\tv\|_\theta|\tw|_\infty\,\sigma_{nd}),
\]
where $\tE(n)=\tilde b(n)\star PV(n)\star PW(n)$.
Calculating as in part~(a) and using 
$P\hV(1)=0$,
\[
\widehat{\tE}(z)=(z-1)\widehat{\tilde b}(z)P\hA_1(z)P\hW(z).
\]
By Lemma~\ref{lem:Tn}(b),
$(z-1)\widehat{\tilde b}(z)P=\hB(z)-(z-1)\widehat{\tH}(z)$ and hence has Fourier coefficients $h(n)$ that satisfy $|h(n)|=O(n^{-\beta'})$ by
Lemma~\ref{lem:Tn}(b,c).
It follows that 
\[
\tE(n)=h(n)\star PA_1(n)\star PW(n) = O(|\tv|_\infty|\tw|_\infty\gamma_{nd}),
\]
yielding the desired estimate in part~(b).
Note also that the terms involving $A_2(n)$ are no longer present.  The estimate for $A_2(n)$ in Lemma~\ref{lem:JA}(c) was the only one that required $\tw$ to be supported in $\hX$, so part~(b) holds for all $\tw\in L^\infty(\Delta)$.
\end{pfof}

\section{Tail estimates}
\label{sec:tail}

In applications, we are often given information about the first return time $h:X\to\Z^+$.  To apply Theorem~\ref{thm:main}, it is necessary to translate this into information about the tails $\mu_Y(\varphi>n)$ of the induced return time $\varphi:Y\to\Z^+$.

We begin with a rough estimate of this type.

\begin{prop} \label{prop:rough}
Fix $\beta,\,\eps>0$.

Suppose that $f:M\to M$ possesses a Chernov-Markarian-Zhang structure and that $\DE=Y^\sigma$ has exponential tails.
\begin{itemize}
\item[(a)] If $\mu_X(h>n)=O(n^{-\beta})$, then
$\mu_Y(\varphi>n)=O((\log n)^\beta n^{-\beta})$.
\item[(b)]
If $\mu_X(h>n)\ge cn^{-\beta}$ for some $c>0$, then there exists $c'>0$ such that
$\mu_Y(\varphi>n)\ge c'(\log n)^{-1}n^{-\beta}$.
\end{itemize}

 Now suppose that $\DE$ has polynomial tails with $\mu_Y(\sigma>n)=O(n^{-q})$ for $q$ sufficiently large (depending on $\beta$ and $\eps$).
\begin{itemize}
\item[(c)] If $\mu_X(h>n)=O(n^{-\beta})$, then
$\mu_Y(\varphi>n)=O(n^{-(\beta-\eps)})$.
\item[(d)]
If $\mu_X(h>n)\ge cn^{-\beta}$ for some $c>0$, then there exists $c'>0$ such that
$\mu_Y(\varphi>n)\ge c'n^{-(\beta+\eps)}$.
\end{itemize}
\end{prop}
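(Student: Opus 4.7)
The plan is to prove all four parts using a single identity---the Kac / unfolding formula
\[
\int_X g\,d\mu_X=\mu_X(Y)\int_Y\sum_{\ell=0}^{\sigma-1}g\circ f_X^\ell\,d\mu_Y
\]
relating $(X,f_X,\mu_X)$ to its induced system on $Y$---combined with a truncation at a threshold $K=K(n)$ chosen differently in each case.

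For the upper bounds (a) and (c), I would write $\mu_Y(\varphi>n)\le\mu_Y(\sigma>K)+\mu_Y(\sigma\le K,\,\varphi>n)$. On the second event, $\varphi(y)=\sum_{\ell<\sigma(y)}h(f_X^\ell y)>n$ with $\sigma(y)\le K$ forces some summand to exceed $n/K$, giving
\[
1_{\{\sigma\le K,\,\varphi>n\}}\le\sum_{\ell=0}^{\sigma-1}1_{\{h\circ f_X^\ell>n/K\}}.
\]
Integrating and applying the unfolding formula with $g=1_{\{h>n/K\}}$ yields $\mu_Y(\sigma\le K,\,\varphi>n)\le\mu_X(Y)^{-1}\mu_X(h>n/K)=O(K^\beta n^{-\beta})$. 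For part (a), take $K=A\log n$ with $A$ large enough that $\mu_Y(\sigma>K)=O(n^{-\beta})$, giving the claimed $(\log n)^\beta n^{-\beta}$. For part (c), take $K=n^{\eps/\beta}$, making $K^\beta n^{-\beta}=n^{-(\beta-\eps)}$ and $\mu_Y(\sigma>K)=O(n^{-q\eps/\beta})=O(n^{-(\beta-\eps)})$ once $q\ge\beta^2/\eps$.

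For the lower bounds (b) and (d), I would run the unfolding formula in reverse: since any $h(f_X^\ell y)>n$ with $\ell<\sigma(y)$ forces $\varphi(y)>n$, we have $\sum_{\ell=0}^{\sigma-1}1_{\{h\circ f_X^\ell>n\}}\le\sigma\,1_{\{\varphi>n\}}$, so the Kac formula gives
\[
cn^{-\beta}\le\mu_X(h>n)\le\mu_X(Y)\int_Y\sigma\,1_{\{\varphi>n\}}\,d\mu_Y\le\mu_X(Y)\Bigl(K\,\mu_Y(\varphi>n)+\int_Y\sigma 1_{\{\sigma>K\}}\,d\mu_Y\Bigr).
\]
The remainder integral is $O(Ke^{-cK})$ under exponential tails and $O(K^{-(q-1)})$ under polynomial tails, both made $o(n^{-\beta})$ by taking $K=A\log n$ (large $A$) in (b) and $K=n^\eps$ with $q>1+\beta/\eps$ in (d); rearranging yields $\mu_Y(\varphi>n)\gg n^{-\beta}/\log n$ in (b) and $\mu_Y(\varphi>n)\gg n^{-(\beta+\eps)}$ in (d).

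The only step requiring any care is the upper-bound estimate for $\mu_Y(\sigma\le K,\,\varphi>n)$: replacing the Kac identity with a crude union bound over $0\le\ell<K$ introduces a spurious factor of $K$ and part (a) would then only yield $(\log n)^{\beta+1}n^{-\beta}$, missing the claimed exponent. Everything else is the arithmetic of balancing two competing terms, and the argument adapts uniformly across all four parts.
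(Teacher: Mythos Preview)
Your proposal is correct and follows essentially the same approach as the paper: the unfolding identity between $(X,f_X,\mu_X)$ and the induced system on $Y$, combined with a truncation of $\sigma$ at a level $K=K(n)$. The paper only writes out part~(b) explicitly (citing~(a) to \cite{ChernovZhang05,Markarian04} and declaring~(c),(d) ``similar''); in~(b) it bounds the remainder via Cauchy--Schwarz, $\int_Y\sigma 1_{\{\sigma>K\}}1_{\{\varphi>n\}}\,d\mu_Y\le|\sigma|_2\,\mu_Y(\sigma>K)^{1/2}$, where you instead estimate $\int_Y\sigma 1_{\{\sigma>K\}}\,d\mu_Y$ directly from the tail decay of $\sigma$. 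Both bounds give the same conclusion after choosing $K$, so this is a cosmetic difference rather than a genuinely different route. Your observation that the Kac identity (rather than a crude union bound over $\ell<K$) is what saves the extra factor of $K$ in~(a) is exactly the point of the Chernov--Zhang refinement of Markarian's original argument.
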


\begin{proof} 
(a) This is proved in~\cite{ChernovZhang05,Markarian04}.

\noindent(b)
Let $\tilde h=h\circ\pi_X:\DE\to\Z^+$ where $\DE=Y^\sigma$
and $\pi_X:\DE\to X$ is the semiconjugacy $\pi_X(y,\ell)=f_X^\ell y$.
Then for any $K>0$,
\begin{align*}
& \bar\sigma\mu_X(h>n)  =\bar\sigma \mu_\DE(\tilde h>n)
=\int_Y\sum_{\ell=0}^{\sigma(y)-1}1_{\{\tilde h(y,\ell)>n\}}\,d\mu_Y(y)
\\ & \qquad \le \int_Y \sigma 1_{\{\varphi>n\}}\,d\mu_Y
= \int_Y \sigma 1_{\{\sigma\le K\log n\}} 1_{\{\varphi>n\}}\,d\mu_Y
+ \int_Y \sigma 1_{\{\sigma> K\log n\}} 1_{\{\varphi>n\}}\,d\mu_Y
\\ & \qquad \le (K\log n)\mu_Y(\varphi>n)+|\sigma|_2(\mu_Y(\sigma>K\log n))^{1/2}.
\end{align*}
We have $\mu_Y(\sigma>n)=O(e^{-an})$ for some $a>0$.
Fixing $K$ sufficiently large, 
\[
(\log n)\mu_Y(\varphi>n) \gg \mu_X(h>n)+O(n^{-K d/2})\gg n^{-\beta},
\]
so $\mu_Y(\varphi>n)\gg (\log n)^{-1}n^{-\beta}$.

\noindent(c,d)  These arguments are similar and hence omitted.
\end{proof}

Next, we consider a sharper estimate following~\cite{MVsub}.
First we collect some special cases of existing results about limit laws.
Assume that $f_X:X\to X$ is modelled by a Young tower $\DE=Y^\sigma$ with $\sigma\in L^2(Y)$.
In particular, $F:Y\to Y$ is Gibbs-Markov.
Let $\bar\sigma=\int_Y\sigma\,d\mu_Y$.

\begin{lemma}  \label{lem:CLT}
Let $\psi:X\to\R$ be integrable with $\int_X\psi\,d\mu_X=0$, and define $\psi_\sigma:Y\to\R$,
$\psi_\sigma(y)=\sum_{\ell=0}^{\sigma(y)-1}\psi(f_X^\ell y)$.
Let $G$ denote a nonconstant random variable.
Let $b_n=n^{1/\beta}$, $1<\beta<2$, or $b_n=(n\log n)^{1/2}$.  (In the latter case, set $\beta=2$.)
\begin{itemize}
\item[(a)] 
$b_n^{-1}\sum_{j=0}^{n-1}\psi\circ f_X^j\to_d G$ if and only if
$b_n^{-1}\sum_{j=0}^{n-1}\psi_\sigma\circ F^j\to_d \bar\sigma^{1/\beta} G$.
\item[(b)] Suppose that $\psi_\sigma$ is constant on elements of the partition
$\alpha$ for the Gibbs-Markov map $F$.
If $b_n^{-1}\sum_{j=0}^{n-1}\psi_\sigma\circ F^j\to_d \bar\sigma^{1/\beta}G$, then 
$\mu_Y(|\psi_\sigma|>n)\sim \bar\sigma c_0n^{-\beta}$ where $c_0>0$ is a constant given explicitly in terms of $G$.  
\end{itemize}
\end{lemma}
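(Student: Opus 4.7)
Part~(a) proceeds by converting between the induced-time sum $\sum_{k=0}^{n-1}\psi_\sigma \circ F^k$ and the real-time sum $\sum_{j=0}^{m-1}\psi\circ f_X^j$ via the natural time-change. The foundation is the telescoping identity $\sum_{k=0}^{n-1}\psi_\sigma \circ F^k = \sum_{j=0}^{\sigma_n - 1}\psi \circ f_X^j$ on $Y$, where $\sigma_n = \sum_{k=0}^{n-1}\sigma \circ F^k$. Birkhoff's theorem gives $\sigma_n/n \to \bar\sigma$ a.s.\ under $\mu_Y$, and the normalizing sequence $b_n$ is regularly varying with index $1/\beta$, so $b_{\sigma_n}/b_n \to \bar\sigma^{1/\beta}$ a.s. If $b_n^{-1}\sum_{j=0}^{n-1}\psi \circ f_X^j \to_d G$ under $\mu_X$ (equivalently under $\mu_Y$, using the measure-preserving semiconjugacy $\pi_X:\DE\to X$), an Anscombe-type theorem for convergence to stable laws evaluated at a random time gives $b_{\sigma_n}^{-1}\sum_{j=0}^{\sigma_n-1}\psi\circ f_X^j \to_d G$, whence $b_n^{-1}\sum_{k=0}^{n-1}\psi_\sigma\circ F^k \to_d \bar\sigma^{1/\beta} G$. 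The converse uses the inverse cocycle $N_n(y) = \max\{k : \sigma_k(y) \le n\}$ with $N_n/n \to 1/\bar\sigma$, splits $\sum_{j=0}^{n-1}\psi\circ f_X^j = \sum_{k=0}^{N_n-1}\psi_\sigma\circ F^k + R_n$ where the remainder $R_n$ contains at most $\sigma(F^{N_n}y)$ summands, and uses $\sigma\in L^2$ to make $R_n/b_n \to 0$ in probability.

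For part~(b), the restriction that $\psi_\sigma$ is constant on the partition $\alpha$ places it in $\cF_\theta(Y)$ and, more importantly, makes the spectral machinery for Gibbs-Markov maps applicable without coboundary adjustments. The perturbation theory of the twisted transfer operator $t \mapsto \hR(e^{it\psi_\sigma})$ near $t = 0$, developed by Aaronson--Denker and Gou\"ezel, reduces the question to the i.i.d.\ case: convergence of $b_n^{-1}\sum_{k=0}^{n-1}\psi_\sigma\circ F^k$ to a nondegenerate $\beta$-stable law is equivalent to $\psi_\sigma$ lying in the classical domain of attraction of that law under $\mu_Y$. The classical characterization of the domain of attraction then yields $\mu_Y(|\psi_\sigma|>t) \sim \bar\sigma c_0 t^{-\beta}$, with $c_0$ read off explicitly from the law of $G$; the factor $\bar\sigma$ appears because of the specific normalization $\bar\sigma^{1/\beta} G$ chosen in the statement.

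The main obstacle is the converse direction in part~(b), which extracts a tail asymptotic from a limit-law hypothesis. In the i.i.d.\ setting this is the classical Feller characterization of regular variation from the domain of attraction, but in the Gibbs-Markov setting one must push the nuclear expansion of $\hR(e^{it\psi_\sigma})$ to the appropriate order in $t$, and in the marginal case $\beta = 2$ the normalization $(n\log n)^{1/2}$ forces one to work with slowly varying truncations when identifying $c_0$. The assumption that $\psi_\sigma$ is constant on $\alpha$ is essential here: without it, the characteristic function of the partial sum would pick up distortion/coboundary corrections whose asymptotic behavior would obscure the tail of $\psi_\sigma$ itself.
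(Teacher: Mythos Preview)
Your approach is essentially the same as the paper's, which proves (a) by citing \cite[Theorem~A.1]{MVsub} and (b) by citing \cite[Theorem~1.5]{Gouezel10b}; you are sketching the content of those references rather than invoking them.

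One point in part~(a) deserves more care. For the forward direction you pass from $b_m^{-1}\sum_{j<m}\psi\circ f_X^j\to_d G$ to $b_{\sigma_n}^{-1}\sum_{j<\sigma_n}\psi\circ f_X^j\to_d G$ by invoking an ``Anscombe-type theorem'', using only $\sigma_n/n\to\bar\sigma$ from Birkhoff. In the dependent setting this random-time substitution is not free: Anscombe's condition is not automatic for $f_X$-orbit sums, and the law of large numbers alone does not control the fluctuations of $\sigma_n$ finely enough. The paper isolates the missing input explicitly: since $F$ is Gibbs-Markov and $\sigma\in L^2$, the CLT holds for $\sigma$, so $n^{-1/2}(\sigma_n-n\bar\sigma)$ is tight; since $b_n\gg n^{1/2}$ for the stated range $\beta\le 2$, this yields $b_n^{-1}(\sigma_n-n\bar\sigma)\to 0$ in probability, which is exactly the hypothesis needed for \cite[Theorem~A.1]{MVsub}. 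You do use $\sigma\in L^2$, but only for the remainder $R_n$ in the converse direction; it should also appear in the forward direction to justify the Anscombe step. Your account of part~(b) matches the content of \cite[Theorem~1.5]{Gouezel10b}.
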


\begin{proof}
(a)  
Since $F$ is Gibbs-Markov, the condition $\sigma\in L^2$ ensures that
$n^{-1/2}(\sigma_n-n\bar\sigma)$ converges in distribution (to a possibly degenerate normal distribution) and hence that
$b_n^{-1}(\sigma_n-n\bar\sigma)$ converges in probability to zero.  The result now follows from~\cite[Theorem~A.1]{MVsub}.
(See~\cite{Gouezel07,Sarig06} for related results.)  
\\[.75ex]
(b) Again using that $F$ is Gibbs-Markov, this follows from~\cite[Theorem~1.5]{Gouezel10b}.
\end{proof}

\begin{cor} \label{cor:tails}
Let $G$, $b_n$, $\beta$ and $c_0$ be as in Lemma~\ref{lem:CLT}.
Suppose that $(\sum_{j=0}^{n-1}h\circ f_X^j-n\int_X h\,d\mu_X)/b_n\to_d G$.
Then $\mu_Y(\varphi>n)\sim \bar\sigma c_0n^{-\beta}$.  
\end{cor}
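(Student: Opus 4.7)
The plan is to pull the given CLT/stable limit for Birkhoff sums of $h$ on $X$ back to a limit for the induced centered observable on $Y$, apply Lemma~\ref{lem:CLT}(b) to read off a tail asymptotic for that induced observable, and then transfer the asymptotic to $\varphi$ itself using that $\sigma\in L^2$ forces $\sigma$ to have negligible tails on the scale $n^{-\beta}$.

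More concretely, set $\bar h=\int_X h\,d\mu_X$ and $\psi=h-\bar h:X\to\R$, so $\int_X\psi\,d\mu_X=0$. Form $\psi_\sigma:Y\to\R$ by $\psi_\sigma(y)=\sum_{\ell=0}^{\sigma(y)-1}\psi(f_X^\ell y)$; the defining identity $\varphi=h_\sigma$ in~\eqref{eq:induce} gives the clean relation
\[
\psi_\sigma=\varphi-\bar h\,\sigma.
\]
The hypothesis of the corollary is exactly $b_n^{-1}\sum_{j=0}^{n-1}\psi\circ f_X^j\to_d G$, so Lemma~\ref{lem:CLT}(a) yields $b_n^{-1}\sum_{j=0}^{n-1}\psi_\sigma\circ F^j\to_d \bar\sigma^{1/\beta}G$. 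To invoke Lemma~\ref{lem:CLT}(b), I need $\psi_\sigma$ constant on elements of the Gibbs-Markov partition $\alpha$: this is built into the Chernov-Markarian-Zhang structure, which requires both $\sigma$ constant on $\alpha$ and $h$ constant on each $f_X^\ell a$ for $0\le\ell\le\sigma(a)-1$. Lemma~\ref{lem:CLT}(b) then delivers
\[
\mu_Y(|\psi_\sigma|>n)\sim\bar\sigma c_0 n^{-\beta}.
\]

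It remains to pass from $|\psi_\sigma|$ to $\varphi$. Since $\bar h\sigma\ge0$, we have $\varphi\ge\psi_\sigma$ and hence the easy lower bound $\mu_Y(\varphi>n)\ge\mu_Y(\psi_\sigma>n)$. For the matching upper bound, for any $\eps\in(0,1)$ the inclusion $\{\varphi>n\}\subset\{\psi_\sigma>(1-\eps)n\}\cup\{\bar h\sigma>\eps n\}$ reduces matters to showing that (i) the negative tail $\mu_Y(\psi_\sigma<-n)$ is negligible, so that $\mu_Y(\psi_\sigma>n)\sim\mu_Y(|\psi_\sigma|>n)$, and (ii) the extra term $\mu_Y(\sigma>\eps n/\bar h)$ is $o(n^{-\beta})$. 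Both follow from $\sigma\in L^2$, which yields $\mu_Y(\sigma>n)=o(n^{-2})$: indeed $\psi_\sigma<-n$ forces $\bar h\sigma>n$, so $\mu_Y(\psi_\sigma<-n)\le\mu_Y(\sigma>n/\bar h)=o(n^{-2})$, and since $\beta\in(1,2]$ in both regimes of Lemma~\ref{lem:CLT}, $o(n^{-2})=o(n^{-\beta})$. Combining and letting $\eps\to 0$ gives $\mu_Y(\varphi>n)\sim\bar\sigma c_0 n^{-\beta}$.

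There is no real obstacle here beyond this two-step transfer. The one spot requiring a mild sanity check is the boundary case $\beta=2$ with $b_n=(n\log n)^{1/2}$, where the error $o(n^{-2})$ is only just small enough to be absorbed into the main term $\sim c_0 n^{-2}$; this is precisely the regime in which the $L^2$ assumption on $\sigma$ is sharp, so it is fortunate that Lemma~\ref{lem:CLT}(a) already imposes $\sigma\in L^2$.
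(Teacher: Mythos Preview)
Your proof is correct and uses the same two ingredients as the paper, Lemma~\ref{lem:CLT}(a) and~(b), but in a slightly different order. The paper first transfers the distributional limit from $\psi_\sigma$ to the centered $\varphi$ itself---writing $\sum_{j<n}\varphi\circ F^j-n\bvarphi=\sum_{j<n}\psi_\sigma\circ F^j+\bar h\bigl(\sum_{j<n}\sigma\circ F^j-n\bar\sigma\bigr)$ and using that the second term divided by $b_n$ tends to $0$ in probability (the fact already invoked in the proof of Lemma~\ref{lem:CLT}(a))---and then applies Lemma~\ref{lem:CLT}(b) directly with ``$\psi_\sigma$'' taken to be $\varphi-\bvarphi$, which is constant on $\alpha$. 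Since $\varphi\ge1$ is bounded below, the conclusion $\mu_Y(|\varphi-\bvarphi|>n)\sim\bar\sigma c_0 n^{-\beta}$ immediately yields $\mu_Y(\varphi>n)\sim\bar\sigma c_0 n^{-\beta}$ with no further work. You instead apply Lemma~\ref{lem:CLT}(b) to $\psi_\sigma=\varphi-\bar h\sigma$ and then correct at the level of tails via an $\eps$-splitting, using $\mu_Y(\sigma>n)=o(n^{-2})$ (which is indeed what $\sigma\in L^2$ gives, since $n^2\mu_Y(\sigma>n)\le\int\sigma^21_{\{\sigma>n\}}\,d\mu_Y\to0$). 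Both routes are valid; the paper's order is marginally shorter because it absorbs the $\sigma$-correction at the distributional level via Slutsky rather than at the tail level.
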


\begin{proof}
Since $\varphi=h_\sigma$, it follows from Lemma~\ref{lem:CLT}(a) that
$(\sum_{j=0}^{n-1}\varphi\circ F^j-n\int_Y \varphi\,d\mu_Y)/b_n\to_d \bar\sigma^{1/\beta}G$.
By Lemma~\ref{lem:CLT}(b),
$\mu_Y(\varphi>n)\sim \bar\sigma c_0n^{-\beta}$.
\end{proof}

\section{Piecewise smooth multidimensional nonMarkovian nonuniformly expanding maps}
\label{sec:NUE}

In this section, we show how to combine the methods in this paper with a result of Alves {\em et al.}~\cite{AFLV11} to treat a large class of multidimensional examples.  In particular, we obtain essentially optimal upper and lower bounds, as well as strong statistical properties, for Hu-Vaienti maps~\cite{HuVaienti09}.

\subsection{Existence of Chernov-Markarian-Zhang structures in arbitrary dimensions}
\label{sec:piecewise}

Let $M\subset\R^k$ be compact.
We consider local diffeomorphisms $f:M\to M$ with finitely many branches.
That is, there are disjoint open subsets
$U_1,\dots,U_K\subset M$ with $M=\bigcup_{i=1}^K \overline{U}_i$, and 
there exists $\eta\in(0,1)$ and 
for $i=1,\dots,K$ there exist
$U_i'\subset \R^k$ open with $\overline{U_i}\subset U_i'$ such that
$f|_{U_i}$ extends to a $C^{1+\eta}$-diffeomorphism from $U_i'$ onto its range.

Next we specify a  compact first return set $X\subset M$ with
$\overline{\Int X}=X$.
(We could take $X$ to be the closure of one of the $U_i$ but this need not be the case.)
For simplicity, we suppose that the boundaries of $U_1,\dots,U_K$ and $X$ are piecewise smooth (with finitely many pieces).  
Let $\cS_0\subset M$ denote the singularity set
$\cS_0=\partial X\cup\bigcup_{i=1}^K\partial U_i$ for $f$.

Now define the first return time $h:X\to\Z^+$ and first return map $f_X=f^h:X\setminus\cS\to X\setminus\cS$ with singularity set 
\[
\cS=\{x\in X:f^jx\in\cS_0\;\,\text{for some $j=0,\dots,h(x)-1$}\}\cup \{h=\infty\}.
\]
A result of Alves {\em et al.}~\cite{AFLV11} guarantees under very mild conditions that $f_X$ is modelled by Young towers with superpolynomial tails if and only if $f_X$ has superpolynomial decay of correlations.
We verify these conditions for a large class of nonuniformly expanding maps.

Define $X_m=\{x\in X\setminus\cS:h(x)=m\}$.
Let $\|\;\|$ denote the Euclidean norm on $\R^k$ and on $k\times k$ matrices.
We suppose that there are constants $\lambda\in(0,1)$, $\delta>0$ and $C,q>1$ such that
\begin{itemize}
\item[(i)] $\Leb(x\in X:\dist(x,\cS)\le\eps)\ll \eps$ for all $\eps\in(0,1)$.
\item[(ii)] $\|(Df_X(x))^{-1}\|\le\min\{\lambda,Cm^{-\delta}\}$ and $\|Df_X(x)\|\le Cm^q$ for all $x\in X_m$, $m\ge1$.
\item[(iii)] $\|(D(f^i)(f^jx))^{-1}\|\le Cm^q$ for all $x\in X_m$, $m\ge1$, and $i,j\ge0$ with $i+j\le m$.
\item[(iv)] $\|f^jx-f^jy\|\le C\|x-y\|^\delta m^q$
for all $x,y\in X_m$ with $\dist(x,y)<\dist(x,\cS)/2$, $m\ge1$, $0\le j<m$.
\end{itemize}

\begin{rmk} \label{rmk:NUE}  If $f$ is noncontracting ($\|Df(x)v\|\ge\|v\|$ for all $x\in M$, $v\in\R^d$), then~(iii) is automatic with $Cm^q$ replaced by $1$ and~(iv) is automatic by~(ii) with $\delta=1$.
\end{rmk}

\begin{lemma} \label{lem:NUE}
Suppose that $f:M\to M$ is a nonuniformly expanding map satisfying conditions~(i)--(iv).  Let $\mu$ be an absolutely continuous mixing $f$-invariant probability measure on $M$ and define $\mu_X=\mu(X)^{-1}\mu|_X$.   Suppose further that
\begin{itemize}
\item[(v)]
$d\mu_X/d\Leb\in L^r(X)$ for some $r>1$,
\item[(vi)] For all $C^\eta$ observables $v:X\to\R$, all $w\in L^\infty(X)$,
and all $p>0$, there is a constant $C>0$ such that
$\big|\int_X v\,w\circ f_X^n\,d\mu_X-\int_X v\,d\mu_X\int_X w\,d\mu_X\big|\le Cn^{-p}$ for all $n\ge1$, and
\item[(vii)] $\sum_m (\log m)\,\mu_X(X_m)<\infty$.
\end{itemize}

Then $f$ possesses a Chernov-Markarian-Zhang structure and the map $f_X:X\to X$ is modelled by Young towers with superpolynomial tails.
\end{lemma}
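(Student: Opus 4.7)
The plan is to invoke the result of Alves, Freitas, Luzzatto and Vaienti~\cite{AFLV11} for the first return map $f_X:X\to X$, converting superpolynomial decay of correlations into a Young tower with superpolynomial tails, and then to upgrade this tower into a Chernov-Markarian-Zhang structure for $f:M\to M$.

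First I would verify that $f_X$ fits the framework of~\cite{AFLV11}. The relevant ingredients are: $f_X$ is a piecewise $C^{1+\eta}$ local diffeomorphism on $X\setminus\cS$, inherited from $f$ being a local diffeomorphism away from $\cS_0$; Lebesgue admits a regular estimate of small neighborhoods of the singular set~(i); the return map is uniformly expanding with contraction rate $\lambda<1$ in the inverse and polynomial growth $m^q$ of $\|Df_X\|$ on $X_m$~(ii); orbits from $X_m$ up to the first return have at most polynomial-in-$m$ backward derivative~(iii) and enjoy a H\"older-type separation estimate~(iv); the invariant density lies in $L^r$~(v); and $h\in L\log L$ by~(vii) so that $\mu_X$ is indeed $f_X$-invariant and integrable. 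Together with the superpolynomial decay hypothesis~(vi), the criterion of~\cite{AFLV11} then produces a subset $Y\subset X$, a return time $\sigma:Y\to\Z^+$ and a full-branch Gibbs-Markov map $F=f_X^\sigma:Y\to Y$ with partition $\alpha$ on which $\sigma$ is constant and $\mu_Y(\sigma>n)=O(n^{-q})$ for arbitrarily large $q$.

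Having obtained this Young tower for $f_X$, I would assemble the Chernov-Markarian-Zhang structure by setting $\varphi=h_\sigma$ as in~\eqref{eq:induce}. Then $F=f^\varphi$, and $\int_Y\varphi\,d\mu_Y<\infty$ follows from~(vii) together with a Kac-type identity. The one delicate structural requirement is that $h$ be constant on $f_X^\ell a$ for every $a\in\alpha$ and $0\le\ell<\sigma(a)$. Since $X$ is partitioned into the piecewise smooth level sets $X_m=\{h=m\}$, I would refine $\alpha$ to the countable partition generated by the sets $\{f_X^{-\ell}X_m\cap Y : 0\le\ell<\sigma,\,m\ge1\}$; this refinement keeps $\sigma$ constant on elements, preserves full branches under $F$ (since the refinement is a pullback of partitions of $X$), inherits the Gibbs-Markov distortion bound~\eqref{eq:GM}, and only strengthens the separation time. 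The semiconjugacy $\pi_M:\Delta=Y^\varphi\to M$ then pushes $\mu_\Delta$ to an absolutely continuous $f$-invariant probability measure, which must coincide with $\mu$ by uniqueness.

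The main obstacle is the careful verification of the hypotheses of~\cite{AFLV11}, because conditions~(ii)--(iv) provide only polynomial-in-$m$ control rather than the uniform bounded distortion assumed in the classical constructions; as stressed in the introduction, Hu-Vaienti-type maps genuinely fail uniform bounded distortion. These polynomial bounds are, however, precisely what~\cite{AFLV11} was designed to tolerate, so the work is in matching our hypotheses to theirs line by line; once the tower is produced, the passage to a Chernov-Markarian-Zhang structure is essentially bookkeeping.
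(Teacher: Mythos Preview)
Your overall strategy --- apply~\cite{AFLV11} to $f_X$ to get a Young tower with superpolynomial tails, then check the Chernov--Markarian--Zhang axioms --- is exactly what the paper does. The paper carries out the verification of the hypotheses of~\cite{AFLV11} in detail (showing $\mu_X$ is an expanding measure via~(ii) and~(vii), and checking conditions (C0)--(C3) of the corrected statement in~\cite{AraujoMsub} using~(i)--(iv)); your sketch identifies the right ingredients for this, though the actual matching involves some nontrivial chain-rule computations to translate the polynomial-in-$m$ bounds into the required log-Lipschitz estimates on $\|Df_X^{-1}\|$ and $|\det Df_X|$.

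There is, however, a genuine gap in your treatment of the constancy of $h$ on $f_X^\ell a$. Your proposal is to refine $\alpha$ by the pullbacks $f_X^{-\ell}X_m$ for $0\le\ell<\sigma$. But such a refinement typically destroys the full-branch property: if $a'\subsetneq a$ is a refined cell, then $F|_{a'}$ is still injective but $F(a')$ need not be all of $Y$, since the sets $f_X^{-\ell}X_m$ for $\ell<\sigma(a)$ impose constraints on the orbit \emph{before} the return and say nothing about the image under $F=f_X^{\sigma(a)}$. Your parenthetical justification (``since the refinement is a pullback of partitions of $X$'') does not rescue this; pullbacks under $f_X^\ell$ with $\ell<\sigma(a)$ do not commute with $F$ in the required way. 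Without full branches you are outside the Gibbs--Markov framework used throughout the paper.

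The paper avoids refinement altogether. It observes that the construction of~\cite{AFLV11} is built on~\cite{AlvesLuzzattoPinheiro05}, where each partition element $a\in\alpha$ is explicitly a topological ball mapped diffeomorphically onto $Y$ by $f_X^\sigma$. Thus each $f_X^\ell a$ is connected and, since the orbit up to time $\sigma(a)$ avoids the singularity set $\cS$ (on whose complement $h$ is locally constant), $f_X^\ell a$ must lie in a single level set $X_m$. Constancy of $h$ on $f_X^\ell a$ then comes for free from the geometry of the tower, with no modification of $\alpha$ needed.
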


\begin{proof}
To prove that $f_X$ is modelled by Young towers with superpolynomial tails,
we apply~\cite[Theorem~C]{AFLV11}.  Since there are some small inaccuracies in the statement there, we refer to~\cite[Theorem~A.1]{AraujoMsub} for a corrected version.
It suffices to verify that $\mu_X$ is an expanding measure and to
verify conditions (C0)--(C3) in~\cite[Appendix~A]{AraujoMsub}.

By condition~(ii), 
 $\log \|(Df_X)^{-1}\|\le\log\lambda<0$ and
\begin{equation} \label{eq:iv}
\|Df_X(x)^{-1}v\|\ge C^{-1}m^{-q}
\quad\text{for all $x\in X_m$, $v\in T_xX$ with $\|v\|=1$,}
\end{equation}
so $|\log \|Df_X(x)^{-1}\|\,|\le \log C+q\log m$ on $X_m$.
By~(vii), $\log \|(Df_X)^{-1}\|$ is integrable with respect to $\mu_X$ 
and $\int_X \log \|(Df_X)^{-1}\|\,d\mu_X\le\log\lambda<0$.
This is the definition for $\mu_X$ to be an expanding measure.

Assumption (i)  is precisely condition (C0).

By (ii) and definition of $\cS$,
\begin{equation} \label{eq:ii}
\dist(x,\cS)\le Cm^{-\delta}\diam M
\quad\text{for all $x\in X_m$, $m\ge1$.}
\end{equation}
By~\eqref{eq:ii} and~(ii),
\[
\dist(x,\cS)\ll \lambda^{-1}\le \|Df_X(x)v\|\ll Cm^q\ll \dist(x,\cS)^{-q/\delta},
\]
for $x\in X_m$, $v\in T_xX$ with $\|v\|=1$,
verifying (C1).

For conditions (C2) and (C3), we consider a pair of points $x,y\in M\setminus\cS$ with $\dist(x,y)<\dist(x,\cS)/2$.  in particular, $x,y\in X_m$ for some $m\ge1$ and $f^jx,f^jy$ lie in common open sets $U_{i(j)}$ for each $0\le j\le m-1$.

On $X_m$, we have
$\log|\det Df_X|=\log|\det D(f^m)|
=\sum_{j=0}^{m-1}\log|\det(Df)|\circ f^j$, so
for $x,y\in X_m$ with $d(x,y)<d(x,\cS)/2$,
\begin{align*}
\big|\log|\det & Df_X(x)| -\log|\det Df_X(y)|\big|  
\\ & \le 
\sum_{j=0}^{m-1}\big|\log|\det (Df)(f^jx)|- \log|\det (Df)(f^jy)|\big|
\\ & \ll \sum_{j=0}^{m-1}\|f^jx-f^jy\|^\eta
\ll \|x-y\|^{\delta\eta}m^{1+q\eta} 
 \ll \|x-y\|^{\delta\eta}\dist(x,\cS)^{-(1+q\eta)/\delta},
\end{align*}
by~\eqref{eq:ii} and~(iv).
This verifies (C3).
Also, 
\begin{align*}
\big|\log\|Df_X(x)^{-1}\|- \log\|Df_X(y)^{-1}\|\big|
& \le \|Df_X(x)^{-1}-Df_X(y)^{-1}\|/\|Df_X(x)^{-1}\|
\\ & \ll m^q \|Df_X(x)^{-1}-Df_X(y)^{-1}\|
\end{align*}
by~\eqref{eq:iv}.
On $X_m$ we have $(Df_X)^{-1}=(D(f^m))^{-1}
=A_{m-1}\cdots A_0$ 
where $A_j(x)=(Df)(f^jx)^{-1}$.
Hence,  
\begin{align*}
\|Df_X(x)^{-1}- & Df_X(y)^{-1}\| 
=\|A_{m-1}(x)\cdots A_0(x)- A_{m-1}(y)\cdots A_0(y)\|
\\&
\le\sum_{i=0}^{m-1}\|A_{m-1}(x)\cdots A_{i+1}(x)\|
\|A_i(x)-A_i(y)\|
\|A_{i-1}(y)\cdots A_0(y)\|
\\ &
\le\sum_{i=0}^{m-1}\|(Df^{m-i-1})(f^{i+1}x)^{-1}\|
\|A_i(x)-A_i(y)\|
\|(Df^i)(y)^{-1}\|
\\ & \le m^{2q}\sum_{i=0}^{m-1}
\|A_i(x)-A_i(y)\|,
\end{align*}
by~(iii).
By~(iv),
\[
\|A_i(x)-A_i(y)\|=\|Df(f^ix)^{-1}-Df(f^iy)^{-1}\|
\ll \|f^ix-f^iy\|^\eta\ll \|x-y\|^{\eta\delta}m^{\eta q}.
\]
Hence by~(ii),
\begin{align*}
\big|\log\|Df_X(x)^{-1}\|- \log\|Df_X(y)^{-1}\|\big|
& \ll m^{3q+\eta q+1} \|x-y\|^{\eta \delta}.
\\ & \ll \|Df_X(x)^{-1}\|^{-(3q+\eta q+1)/\delta} \|x-y\|^{\eta \delta}.
\end{align*}
This verifies condition~(C2). 

Hence we conclude from~\cite[Theorem~A.1]{AraujoMsub} that for any $q>1$ the map $f_X:X\to X$ is modelled by a Young tower $\DE=Y^\sigma$ with $\mu_Y(\sigma>n)=O(n^{-q})$.

Finally, we note that the construction in~\cite{AFLV11} uses~\cite[Main Theorem~1]{AlvesLuzzattoPinheiro05} where it is made explicit that $Y$ together with its partition elements $a\in\alpha$ are diffeomorphic to open balls in $\R^k$ with the property that $f_X^\sigma$ maps each $a$ diffeomorphically onto $Y$.
In particular, the connected set $f_X^\ell a$ lies in one of the subsets $X_m$ for each $0\le\ell<\sigma(a)-1$, so $h$ is constant on $f_X^\ell a$.  
Hence $f$ possesses a Chernov-Markarian-Zhang structure.
\end{proof}

\begin{rmk} \label{rmk:stretch}
In the situation of Lemma~\ref{lem:NUE},
suppose in addition that condition~(vi) is improved to stretched exponential decay of correlations and that $d\mu_X/d\Leb$ is bounded below. Then part~(2) of~\cite[Theorem~C]{AFLV11} yields Young towers $\DE$ with stretched exponential tails.  In particular, if the rate of decay of correlations is exponential and
$d\mu_X/d\Leb$ is bounded below, then for every $\gamma\in(0,\frac19)$, there exists $Y\subset X$ and $c>0$ such that
$f_X:X\to X$ is modelled by a Young tower $Y^\sigma$ with
$\mu_Y(\sigma>n)=O(e^{-cn^\gamma})$.

A standard argument (see for example~\cite[Theorem~A and p.~1210]{HuVaienti09}) shows that
$d\mu_X/d\Leb$ is bounded below whenever $f$ is noncontracting and topologically exact.
\end{rmk}

\subsection{Upper bounds and limit laws}
\label{sec:limit}

Although the emphasis in this paper is on lower bounds, we obtain essentially optimal upper bounds and many strong statistical properties as a consequence of Lemma~\ref{lem:NUE}.  

Suppose that in the situation of Lemma~\ref{lem:NUE}, $\mu_X(h>n)=O(n^{-\beta})$ for some $\beta>1$.
Then $\mu_Y(\varphi>n)=O(n^{-(\beta-\eps)})$ by Proposition~\ref{prop:rough}(c)
where $\eps>0$ is arbitrarily small.
  Hence by~\cite{Young99}, we have the upper bound
\[
\rho_{v,w}(n)=O(\|v\|_\cH|w|_\infty\, n^{-(\beta-1-\eps)})
\quad\text{for  all $v\in\cH(M)$, $w\in L^\infty(M)$, $n\ge1$.}
\]

By~\cite{DedeckerMerlevede15,GouezelM14,MN08}, 
large deviation estimates and moment bounds follow from this upper bound for all \mbox{$\beta>1$}.  
For $\beta>2$, we obtain the following properties.
 The central limit theorem (CLT) and weak invariance principle (WIP) follow from~\cite{MN05}.  For error rates (Berry-Esseen estimates) in the CLT, and the local CLT, see~\cite{Gouezel05}.   
The almost sure invariance principle with rates follows by~\cite{CunyDedeckerKorepanovMerlevedesub,CunyMerlevede15,Korepanov18}.

Homogenization (convergence of fast-slow systems to a stochastic differential equation) when the fast dynamics is one of these maps $f:M\to M$ follows from~\cite{CFKMZ,GM13,KM16}.  Convergence rates in the WIP and homogenization are obtained in~\cite{AntoniouMapp}.

\subsection{Lower bounds}
\label{sec:cor}

We continue to suppose that we are in the situation of Lemma~\ref{lem:NUE}
and that $\mu_X(h>n)=O(n^{-\beta})$ for some $\beta>1$.  Let $\eps>0$.
Again $\mu_Y(\varphi>n)=O(n^{-(\beta-\eps)})$ and also
$\gamma_n=O(n^{-(\beta-\eps)})$ by Proposition~\ref{prop:int}.
Hence it follows from Theorem~\ref{thm:main}(a) that
\[
\rho_{v,w}(n)=\bvarphi^{-1}d{\textstyle\sum_{j>n/d}\,}\mu_Y(\varphi>jd)\int_M v\,d\mu\int_M w\,d\mu+O(n^{-(s-\eps)})
\]
for all $v\in \cH(X)$, $w\in L^\infty(X)$,
where $s=\min\{2(\beta-1),\beta\}$.
By Theorem~\ref{thm:main})(b),
$\rho_{v,w}(n)=O(n^{-(\beta-\eps)})$ for all
$v\in\cH(X)$ with $\int_Mv\,d\mu=0$ and all $w\in L^\infty(M)$.

If moreover, $\mu_X(h>n)\approx n^{-\beta}$, then by Proposition~\ref{prop:rough}(c,d), 
\[
n^{-(\beta-1+\eps)}\ll \rho_{v,w}(n) \ll n^{-(\beta-1-\eps)},
\]
for all $v\in \cH(X)$, $w\in L^\infty(X)$ with nonzero mean.

\subsection{Application to Hu-Vaienti maps}
\label{sec:HV}
We continue to consider local diffeomorphisms $f:M\to M$, where $M\subset\R^k$ is compact, with finitely many branches as in Subsection~\ref{sec:piecewise}.
We now specialize to intermittent maps with a neutral fixed point at $0$ as described in Section~\ref{sec:HuV}.
These maps are piecewise $C^{1+\eta}$ for some $\eta\in(0,1)$ with finitely many branches, noncontracting everywhere (so $\|Df(x)v\|\ge \|v\|$ for all $x\in \R^k$, $v\in \R^k$), and expanding everywhere except at $0$
(so $\|Df(x)v\|>\|v\|$ for all $v\in\R^k$ if and only if $x\neq 0$).

The existence of absolutely continuous invariant probability measures for one-dimensional intermittent maps was studied 
 by~\cite{Thaler80} when the maps are Markov and by~\cite{Zweimuller98} in the nonMarkov case.  
In~\cite{HuVaienti09},  a Banach space of quasi-H\"older observables studied by~\cite{Keller85,Saussol00} was used to establish existence of $\sigma$-finite absolutely continuous ergodic $f$-invariant measures $\mu$ on $M$ 
for multidimensional nonMarkov nonuniformly expanding maps.  
The cases $\mu(M)<\infty$ and $\mu(M)=\infty$ are considered equally in~\cite{HuVaienti09}; here we focus on the case of finite measures.
The results in~\cite{HuVaienti09} require a delicate analysis taking into account poor distortion properties of multidimensional nonuniformly expanding maps.
In~\cite{HuVaientiapp}, the quasi-H\"older space was used further to analyze upper and lower bounds on decay of correlations.  
Here we show how to combine~\cite{HuVaienti09} and  Lemma~\ref{lem:NUE} to obtain the essentially optimal results mentioned in Section~\ref{sec:HuV}.

To fix ideas, we focus on~\cite[Example~5.1]{HuVaientiapp}, setting
\[
f(x)=x(1+|x|^\gamma+O(|x|^{\gamma'}))
\]
 for $x$ close to $0$ where
$\gamma\in(0,k)$ and $\gamma'>\gamma$.
Recall that the domains of the branches are denoted $U_1,\dots,U_K$ and have piecewise smooth boundaries; we assume that $0\in\Int U_1$ and $f^{-1}0\cap \bigcup\partial U_j=\emptyset$.  
This means that Assumptions~1 and~2 of~\cite[Theorem~A]{HuVaienti09} are satisfied.
Also, we assume that $f:M\to M$ is topologically exact.
Our final assumption is a growth of complexity condition,
Assumption~3 in~\cite[Theorem~A]{HuVaienti09},
which is too technical to reproduce here.  
As pointed out in~\cite[Remark~5.2]{HuVaientiapp} it follows from~\cite[Lemma~2.1]{Saussol00} that we can arrange for Assumption~3 to be satisfied by choosing $f$ to be sufficiently expanding outside of a suitable neighborhood of $0$.

Choose an open ball $R$ with $0\in R\subset U_1$ such that
$\bar R\subset fR$ and $\overline{fR}\subset U_1$.
Set $X=M\setminus R$.

\begin{prop} \label{prop:HuV}
There is a unique absolutely continuous invariant probability measure $\mu_X$ on $X$.
The assumptions of Lemma~\ref{lem:NUE} are satisfied and 
$\mu_X(h>n)\approx  n^{-\beta}$ where $\beta=k/\gamma$.
\end{prop}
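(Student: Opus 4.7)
The plan is to address four tasks in order: construct the invariant probability measure, compute the first-return tail $\mu_X(h>n)$, verify the routine geometric/derivative conditions of Lemma~\ref{lem:NUE}, and finally establish the superpolynomial decay condition~(vi), which I expect to be the main obstacle.

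First, I would apply~\cite[Theorem~A]{HuVaienti09} to produce an absolutely continuous ergodic $f$-invariant probability measure $\mu$ on $M$. The Hu-Vaienti Assumptions~1 and~2 are built into the setup, while their Assumption~3 is arranged by the expanding-outside-a-neighbourhood-of-$0$ condition as noted in~\cite[Remark~5.2]{HuVaientiapp}. Topological exactness promotes $\mu$ to a mixing measure, and the quasi-H\"older regularity of $d\mu/d\Leb$ established in~\cite{HuVaienti09}, together with the standard lower-bound argument referenced in Remark~\ref{rmk:stretch}, gives that $d\mu_X/d\Leb$ is bounded above and below on $X$. Hence $\mu_X=\mu(X)^{-1}\mu|_X$ is the unique ACIPM on $X$, and condition~(v) of Lemma~\ref{lem:NUE} holds.

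Next, for the tail asymptotic I would perform the standard radial analysis near the neutral fixed point. Writing $r_j=|f^jx|$ for $x$ near $0$, the recursion $r_{j+1}=r_j(1+r_j^\gamma+O(r_j^{\gamma'}))$ integrates to $r_j^{-\gamma}=r_0^{-\gamma}-\gamma j+O(\log(1/r_0))$, so an orbit starting at distance $r$ from $0$ escapes $\bar R$ in time $\approx r^{-\gamma}$. Hence $\{x\in X:h(x)>n\}$ coincides up to a null set with $\{x\in X:f(x)\in B(0,cn^{-1/\gamma})\}$, both inclusions holding for appropriate constants $c$. Since $f$ has finitely many $C^{1+\eta}$ branches with bounded derivative, the Lebesgue measure of this preimage is $\approx n^{-k/\gamma}$, and combined with the two-sided bounds on $d\mu_X/d\Leb$ this yields $\mu_X(h>n)\approx n^{-k/\gamma}=n^{-\beta}$. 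This also secures condition~(vii).

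Then I would verify the remaining geometric and derivative conditions. Conditions~(iii) and~(iv) are free by Remark~\ref{rmk:NUE} since $f$ is noncontracting. For~(ii), the chain-rule factorisation $Df_X(x)=Df(f^{m-1}x)\cdots Df(fx)\cdot Df(x)$ on $X_m$ splits into one uniform-expansion factor (from the step in $X$) giving $\|(Df_X)^{-1}\|\le\lambda<1$, and $m-1$ near-identity factors $Df(f^jx)=I+O(r_j^\gamma)$ along the excursion in $R$; using $r_j\approx(m-j)^{-1/\gamma}$ and $\sum_{j=1}^{m-1}(m-j)^{-1}\approx\log m$, these factors contribute bounds $\asymp m^{\pm c}$, yielding both $\|(Df_X)^{-1}\|\le Cm^{-\delta}$ and $\|Df_X\|\le Cm^q$ for appropriate $\delta,q>0$. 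For~(i), the containments $\bar R\subset fR\subset U_1$ force $\cS_0\cap R=\emptyset$, so the condition $f^jx\in\cS_0$ with $1\le j\le h(x)-1$ is vacuous (the intermediate iterates lie in $R$); therefore $\cS$ reduces up to a null set to $\cS_0\cap X$, a finite union of piecewise smooth hypersurfaces whose $\eps$-neighbourhood has Lebesgue measure $\ll\eps$.

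The hard part will be condition~(vi), polynomial decay of correlations for $f_X$ at arbitrarily fast rate. Although $f_X$ is uniformly expanding by~(ii) and piecewise $C^{1+\eta}$, it has infinitely many branches $\{X_m\}$ whose geometry degenerates near $\partial R$, so Saussol's framework~\cite{Saussol00} and direct Young-tower constructions do not apply off the shelf. My plan is to apply the quasi-H\"older transfer operator machinery of~\cite{HuVaienti09,Saussol00} directly to $f_X$: since there is no neutral fixed point inside $X$, the Lasota-Yorke inequality on the quasi-H\"older space should be genuinely contracting rather than marginal, yielding a spectral gap and hence exponential (in particular superpolynomial) decay of correlations. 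The delicate ingredient is obtaining constants in the Lasota-Yorke inequality that sum across the countably many branches of $f_X$, which I plan to control using the boundary-measure bound on $\partial X_m$ that follows from the radial estimate above, combined with the growth-of-complexity hypothesis on $f$.
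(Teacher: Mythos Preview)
Your overall approach is correct and aligned with the paper's, but you have misjudged where the difficulty lies. The quasicompactness of the transfer operator for $f_X$ in the quasi-H\"older space is not something you need to establish anew: it is precisely the central technical step in the proof of~\cite[Theorem~A]{HuVaienti09}. That paper constructs the absolutely continuous invariant measure for $f$ by first proving a Lasota--Yorke inequality for the first return map $f_X$, handling exactly the countably many degenerating branches and the growth-of-complexity hypothesis that you flag as ``the delicate ingredient''. Hence condition~(vi) follows immediately by citation --- in fact with exponential decay, not merely superpolynomial --- and condition~(v), uniqueness of $\mu_X$, and mixing (via~\cite[Lemma~3.1]{Saussol00} and topological exactness) all drop out of the same spectral gap. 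The paper likewise cites~\cite{HuVaienti09,HuVaientiapp} directly for the derivative bounds in~(ii) rather than rederiving them; your radial-recursion sketch is correct in spirit, but the paper's point is that since any $\delta>0$ small and any $q$ large suffice, the unbounded-distortion subtleties studied in those references are irrelevant here. In short, your plan would work but reproduces substantial portions of~\cite{HuVaienti09} that can simply be quoted.
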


\begin{proof}
The singular set $\partial S$ is a countable union of piecewise smooth submanifolds limiting on finitely many piecewise smooth submanifolds, so 
condition~(i) is satisfied.

Since the first return set $X$ is bounded away from $0$, it is immediate from noncontractivity on $M$ and uniform expansion on $X$ that
$\|(Df_X)^{-1}\|\le\lambda<1$.  
The remaining estimates in~(ii) are established in~\cite{HuVaienti09,HuVaientiapp}.  (A big advantage here is that $\delta$ can be taken arbitrarily small and $q$ arbitrarily large, so the fine details in~\cite{HuVaienti09,HuVaientiapp} such as unbounded distortion are not an issue.)
Since $f$ is noncontracting, conditions~(iii) and~(iv) hold by Remark~\ref{rmk:NUE}.  

A key step in~\cite{HuVaienti09} is to establish quasicompactness of the transfer operator for the first return map $f_X:X\to X$.
Assumptions~1--3 of~\cite[Theorem~A]{HuVaienti09} are mentioned explicitly above.  
As noted in~\cite[Example~5.1]{HuVaientiapp}, Assumption 4 is automatic.  
Hence~\cite[Theorem~A]{HuVaienti09} guarantees the existence of 
an absolutely continuous invariant probability measure $\mu_X$ on $X$.  The density is quasi-H\"older and hence 
 lies in $L^\infty(X)$ verifying condition~(v) of Lemma~\ref{lem:NUE}.
By Remark~\ref{rmk:stretch}, the density is also bounded below and hence $\mu_X$ is unique.

Moreover,~\cite[Theorem~A]{HuVaienti09} establishes quasicompactness in the quasi-H\"older space and hence $\mu_X$ is mixing up to a finite cycle.  Since the support of a nonvanishing quasi-H\"older function has nonempty interior~\cite[Lemma~3.1]{Saussol00}, it follows from topological exactness that $\mu_X$ is mixing.
Condition~(vi) is now an immediate consequence of quasicompactness.

By~\cite{HuVaienti09}, 
each $X_m$ is a finite union of approximately spherical shells bounded by hypersurfaces $S_m$ and $S_{m+1}$ where $S_m$ is approximately a sphere of radius $\approx m^{-1/\gamma}$.  
It follows that $\mu_X(X_m)\ll\Leb(X_m)\ll m^{-k/\gamma}$ so condition~(vii) is satisfied.

By Remark~\ref{rmk:stretch}, topological exactness ensures that
$\mu_X(h>n)\approx\Leb(h>n)$.
Moreover, $\{h>n\}=\bigcup_{m>n}X_m$ is a finite union of balls of radius
$\approx n^{-1/\gamma}$ so
$\mu_X(h>n)\approx n^{-\beta}$.
\end{proof}

Hence for $\gamma\in(0,k)$, we can apply the results in Subsections~\ref{sec:limit} and~\ref{sec:cor} to obtain the upper and lower bounds in~\eqref{eq:HV}, as well as the limit laws mentioned in Section~\ref{sec:limit}.

\section{Two-sided version of the main result}
\label{sec:main2}

In this section, we extend Theorem~\ref{thm:main} to invertible maps.
A two-sided analogue of the Chernov-Markarian-Zhang structure is 
described in Subsection~\ref{sec:prel2}.
The main result of this section, Theorem~\ref{thm:main2}, is stated in Subsection~\ref{sec:state2} and reformulated for towers in Subsection~\ref{sec:Delta2}.
In Subsection~\ref{sec:approx}, we show how to approximate two-sided observables by one-sided observables.
In Subsection~\ref{sec:proof2}, we complete the proofs.

\subsection{Preliminaries}
\label{sec:prel2}

We describe a two-sided (invertible) analogue of the structures discussed in Section~\ref{sec:prel}.  Throughout, $f:M\to M$, $f_X:X\to X$, $f_\Delta:\Delta\to\Delta$ and $F:Y\to Y$ are all two-sided versions of the maps from Section~\ref{sec:prel}, and the one-sided versions are denoted $\bF:\bY\to\bY$ and so on.   We continue to write 
$\bvarphi=\int_Y\varphi\,d\mu_Y$ but as will become clear this does not cause any confusion.

\paragraph{Two-sided Gibbs-Markov maps}

Let $(Y,d)$ be a bounded metric space with Borel probability measure $\mu_Y$
and let $F:Y\to Y$ be an ergodic measure-preserving transformation.
Let $\bF:\bY\to\bY$ be a full-branch Gibbs-Markov map
with partition $\alpha$ and ergodic invariant probability measure $\bar\mu_Y$.

We suppose that there is a measure-preserving semiconjugacy $\bar\pi:Y\to\bY$,
so $\bar\pi\circ F=\bF\circ\bar\pi$ 
and $\bar\pi_*\mu_Y=\bar\mu_Y$.
The separation time on $\bY$ lifts to a separation time on $Y$ given by
$s(y,y')=s(\bar\pi y,\bar\pi y')$ for $y,y'\in Y$.
Suppose that 
there exist constants $C>0$, $\theta\in(0,1)$ such that
\begin{equation} \label{eq:prod}
d(F^ny,F^ny')\le C(\theta^n+\theta^{s(y,y')-n})
\quad\text{for all $y,y'\in Y$, $n\ge1$}.
\end{equation}
Then we call $F:Y\to Y$ a {\em two-sided Gibbs-Markov map}.

\paragraph{Two-sided Young towers}
Let $F:Y\to Y$ be a two-sided Gibbs-Markov map on $(Y,\mu_Y)$ and let $\varphi:Y\to\Z^+$ be an integrable function that is constant on $\bar\pi^{-1}a$ for each $a\in\alpha$.
In particular, such a return time $\varphi$ is well-defined on $\bY$.  
Define the one-sided Young tower $\bar\Delta=\bY^\varphi$ and tower map $\bar f_\Delta:\bar\Delta\to\bar\Delta$ as in Section~\ref{sec:prel}.
Using $F:Y\to Y$ instead of $\bF:\bY\to\bY$, we also define
the {\em two-sided Young tower} $\Delta=Y^\varphi$ and {\em tower map} $f_\Delta:\Delta\to\Delta$.
We obtain ergodic invariant probability measures 
$\mu_\Delta=(\mu_Y\times{\rm counting})/\bvarphi$ 
and $\bar\mu_\Delta=(\bar\mu_Y\times{\rm counting})/\bvarphi$ 
on $\Delta$ and $\bar\Delta$.

The projection $\bar\pi:Y\to\bY$ extends to $\bar\pi:\Delta\to\bar\Delta$ with
$\bar\pi(y,\ell)=(\bar\pi y,\ell)$.  This defines a measure-preserving semiconjugacy between $f_\Delta$ and $\bar f_\Delta$.

Now suppose that 
$f:M\to M$ is an ergodic measure-preserving transformation
on a probability space $(M,\mu)$, 
and that $Y\subset M$ is measurable with $\mu(Y)>0$.
Suppose that  $F:Y\to Y$ 
is a two-sided Gibbs-Markov map with respect to a probability $\mu_Y$ on $Y$, and that $\varphi:Y\to\Z^+$ is a return time as above.
Form the tower $\Delta=Y^\varphi$ and tower map $f_\Delta:\Delta\to\Delta$.  The map $\pi_M:\Delta\to M$, $\pi_M(y,\ell)=f^\ell y$ defines a semiconjugacy between $f_\Delta$ and $f$.  We require moreover that $(\pi_M)_*\mu_\Delta=\mu$.
Then we say that $f$ is {\em modelled by a two-sided Young tower}.

\paragraph{Two-sided Chernov-Markarian-Zhang structure}

Let $(M,d)$ be a bounded metric space with Borel probability measure $\mu$ and let
$f:M\to M$ be an ergodic and mixing measure-preserving transformation.
Suppose that $Y\subset X\subset M$ are Borel sets with $\mu(Y)>0$.
Define
the first return time $h:X\to\Z^+$ and first return map $f_X=f^h:X\to X$. 

We require that $f_X:X\to X$ is modelled by a two-sided Young tower $\DE=Y^\sigma$ with 
return time $\sigma:Y\to\Z^+$ and return map $F=f_X^\sigma:Y\to Y$.
Here, $F=f_X^\sigma:Y\to Y$ is a two-sided Gibbs-Markov map with ergodic invariant probability measure $\mu_Y$ and partition $\alpha$ such that
$\sigma$ is constant on partition elements.  We require in addition that $h$ is constant on $f_X^\ell \bar\pi^{-1}a$
for all $a\in\alpha$, $0\le\ell\le\sigma(a)-1$.

Define the {\em induced return time}
$\varphi=h_\sigma:Y\to\Z^+$ as in~\eqref{eq:induce}.
Then $\varphi$ is an integrable return time (constant on $\bar\pi^{-1}a$ for $a\in\alpha$).
In particular, $f:M\to M$ is modelled by a Young tower $\Delta=Y^\varphi$ with the same two-sided Gibbs-Markov map $F=f_X^\sigma=f^\varphi$.

We say that $f:M\to M$ satisfying these assumptions possesses a {\em two-sided Chernov-Markarian-Zhang structure}.

\begin{rmk} \label{rmk:CMZ2}
Young~\cite{Young98} introduced Young towers with exponential tails as a general method for dealing with diffeomorphisms with singularities; the initial landmark application was to prove exponential decay of correlations for planar finite horizon dispersing billiards.  Chernov~\cite{Chernov99}  simplified the construction of exponential Young towers and used this to prove exponential decay of correlations for planar dispersing billiards with infinite horizon.
Then Young~\cite{Young99} studied examples with subexponential decay of correlations using Young towers with subexponential tails.  Markarian~\cite{Markarian04}, noting that Chernov's simplification no longer applies in the subexponential case, devised the method outlined in this section: namely to construct a first return map for which Chernov~\cite{Chernov99} applies.  This was used to prove the decay of correlations bound $O(1/n)$ for Bunimovich stadia.  The method was extended and simplified by
Chernov \& Zhang~\cite{ChernovZhang05}
who applied it to a large class of billiard examples.  Subsequent applications of the method include~\cite{ChernovMarkarian07,ChernovZhang05b,Zhang17a}.
\end{rmk}

\begin{rmk}  We have omitted much of the structure often associated with Young towers, mentioning only those properties required in the sequel.
For instance, we have not made any explicit mention of a product structure, though we make use of 
condition~\eqref{eq:prod} which is a consequence.
Similarly, we have not made explicit the quotienting procedure (along local stable leaves) that passes from $F$ to $\bF$.
\end{rmk}

\paragraph{Two-sided dynamically H\"older observables}
Suppose that $f:M\to M$ admits a two-sided Chernov-Markarian-Zhang structure as above.
Fix $\theta\in(0,1)$.  For $v:M\to\R$, define
\[
{\|v\|}_\cH=|v|_\infty+{|v|}_\cH, \quad
{|v|}_\cH=\sup_{y,y'\in Y,\,y\neq y'}\sup_{0\le\ell<\varphi(y)}
\frac{|v(f^\ell y)-v(f^\ell y')|}{d(y,y')+\theta^{s(y,y')}}.
\]
We say that $v$ is {\em dynamically H\"older}
if ${\|v\|}_\cH<\infty$ and denote by $\cH(M)$ the space of such observables.
Write $\cH(X)=\{v\in \cH(M):\supp v\subset X\}$.

Again, it is standard that H\"older observables are dynamically H\"older for the classes of dynamical systems of interest in this paper:

\begin{prop} \label{prop:dyn2}
Let $\eta\in(0,1]$ and let $d_0$ be a bounded metric on $M$.  
Let $C^\eta(M)$ be the space of observables that are $\eta$-H\"older with respect to $d_0$.  Suppose that
there exist $K>0$, $\gamma_0\in(0,1)$ such that
$d_0(f^\ell y,f^\ell y')\le K(d_0(y,y')+\gamma_0^{s(y,y')})$
 for all $y,y'\in Y$, $0\le\ell\le \varphi(y)-1$.

Then 
$C^\eta(M)\subset \cH(M)$ where we may choose any $\theta\in[\gamma_0^\eta,1)$
and $d=d_0^{\eta'}$ for any $\eta'\in(0,\eta]$.
\end{prop}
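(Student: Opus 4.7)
The plan is to mimic the proof of Proposition~\ref{prop:dyn} but with the extra care needed for the two-sided separation structure. Take $v\in C^\eta(M)$, $y,y'\in Y$ and $0\le\ell<\varphi(y)$. Applying the H\"older property of $v$ with respect to $d_0$ and then the hypothesis on how $f^\ell$ distorts $d_0$ on orbits through $Y$, we get
\[
|v(f^\ell y)-v(f^\ell y')|\le {|v|}_{C^\eta}\,d_0(f^\ell y,f^\ell y')^\eta
\le K^\eta {|v|}_{C^\eta}\,\bigl(d_0(y,y')+\gamma_0^{s(y,y')}\bigr)^\eta.
\]

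Next I would use the subadditivity $(a+b)^\eta\le a^\eta+b^\eta$ valid for $a,b\ge0$ and $\eta\in(0,1]$ to split the right-hand side as
\[
K^\eta {|v|}_{C^\eta}\bigl(d_0(y,y')^\eta+\gamma_0^{\eta s(y,y')}\bigr).
\]
For the geometric term, since $\theta\ge\gamma_0^\eta$ we have $\gamma_0^{\eta s(y,y')}=(\gamma_0^\eta)^{s(y,y')}\le\theta^{s(y,y')}$. For the metric term, let $D=\diam_{d_0}M<\infty$. Since $\eta'\le\eta$, one has $d_0(y,y')^\eta=d_0(y,y')^{\eta-\eta'}d_0(y,y')^{\eta'}\le D^{\eta-\eta'}d(y,y')$ where $d=d_0^{\eta'}$. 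Combining,
\[
|v(f^\ell y)-v(f^\ell y')|\le K^\eta {|v|}_{C^\eta}\max\{D^{\eta-\eta'},1\}\,\bigl(d(y,y')+\theta^{s(y,y')}\bigr).
\]

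Taking the supremum over $y,y'\in Y$, $y\neq y'$, and $0\le\ell<\varphi(y)$ in the definition of ${|v|}_\cH$ yields ${|v|}_\cH\le K^\eta\max\{D^{\eta-\eta'},1\}\,{|v|}_{C^\eta}<\infty$, and since $v$ is bounded we conclude $v\in\cH(M)$. There is no serious obstacle: the only ingredients beyond the one-sided version are (i) the subadditivity of $t\mapsto t^\eta$, used to separate the metric and geometric contributions in $(d_0+\gamma_0^s)^\eta$, and (ii) the snowflake trick $d=d_0^{\eta'}$ together with boundedness of $d_0$, which absorbs the mismatch between the exponent $\eta$ coming from H\"older regularity and the exponent $\eta'$ chosen for the metric in the definition of $\cH(M)$.
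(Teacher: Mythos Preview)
Your proof is correct and follows essentially the same route as the paper's own argument. The paper compresses the subadditivity step $(a+b)^\eta\le a^\eta+b^\eta$ and the snowflake bound $d_0^\eta\le D^{\eta-\eta'}d_0^{\eta'}$ into a single ``$\ll$'', whereas you have spelled these out explicitly with the constants; otherwise the arguments are identical.
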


\begin{proof}
Let $v\in C^\eta(M)$, $y,y'\in Y$, $0\le\ell<\varphi(y)-1$. Then
\begin{align*}
|v(f^\ell y)- v(f^\ell y')|
 \le {|v|}_{C^\eta}d_0(f^\ell y,f^\ell y')^\eta
   & \le K^\eta {|v|}_{C^\eta} (d_0(y,y')^\eta+\gamma_0^{\eta s(y,y')})
   \\ & \ll {|v|}_{C^\eta} (d(y,y')+\theta^{s(y,y')}).
\end{align*}
Hence ${|v|}_{\cH}\ll {|v|}_{C^\eta}$
and it follows that $v\in \cH(M)$.
\end{proof}

\subsection{Statement of the main result}
\label{sec:state2}

As in Section~\ref{sec:main}, we provide an abstract result for maps $f:M\to M$ with a Chernov-Markarian-Zhang structure under the assumption that $\mu_Y(\varphi>n)=O(n^{-\beta'})$ for some $\beta'>1$.   In this situation it follows from Young~\cite{Young99} that $\rho_{v,w}(n)=O(n^{-(\beta'-1)})$ for dynamically H\"older observables.
(The result in~\cite{Young99} is formulated for one-sided systems; see~\cite[Theorem~2.10]{KKM19} or~\cite[Appendix~B]{MT14} for the two-sided case.)
We obtain a lower bound for dynamically H\"older observables supported in $X$.

Define $\sigma_n,\,\gamma_n$ as in~\eqref{eq:gamma}
and $\zeta_{\beta'}$ as in~\eqref{eq:zeta}.

\begin{thm} \label{thm:main2}
Let $f:M\to M$ be a map with a two-sided Chernov-Markarian-Zhang structure, and suppose that $\mu_Y(\varphi>n)=O(n^{-\beta'})$ for some $\beta'>1$. 
Then there is a constant $C>0$ such that for all $n\ge1$,
\begin{itemize}
\item[(a)]
$\displaystyle\Big|\rho_{v,w}(n)-\bvarphi^{-1}\sum_{j>n} \mu_Y(\varphi>j)\int_Mv\,d\mu\int_Mw\,d\mu\Big|
\le C{\|v\|}_{\cH}{\|w\|}_{\cH}(\gamma_{[n/3]}+\zeta_{\beta'}(n))$
for all $v,\,w\in \cH(X)$,
\item[(b)]
$|\rho_{v,w}(n)|\le C{\|v\|}_{\cH}{\|w\|}_\cH\gamma_{[n/3]}$ 
for all $v,\,w\in \cH(X)$ with $\int_M v\,d\mu=0$.
\end{itemize}
\end{thm}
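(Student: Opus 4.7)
The strategy is to deduce Theorem~\ref{thm:main2} from the one-sided result Theorem~\ref{thm:main} via an approximation argument on the two-sided tower. Following the pattern of Subsection~\ref{sec:Delta}, I would first lift $v,w\in\cH(X)$ to $\tilde v,\tilde w$ on $\Delta$ via the semiconjugacy $\pi_M:\Delta\to M$, pass to a mixing version by quotienting out $d=\gcd\{\varphi(a):a\in\alpha\}$, and formulate a two-sided tower analogue of Theorem~\ref{thm:Delta}. Theorem~\ref{thm:main2} then follows from this analogue via the same deduction used at the end of Subsection~\ref{sec:Delta}.

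The heart of the proof is an approximation lemma, to be proved in Subsection~\ref{sec:approx}. For $\tilde v$ dynamically Hölder on $\Delta$ and $N\ge 1$, the plan is to construct $\bar v_N:\bar\Delta\to\R$ such that $\bar v_N\circ\bar\pi$ approximates $\tilde v\circ f_\Delta^N$ in an effective sense, with $\|\bar v_N\|_\theta$ bounded by a uniform multiple of $\|\tilde v\|_\cH$. The key input is the stable contraction $d(F^jy,F^jy')\le C\theta^j$ for $y,y'$ in a common fiber of $\bar\pi$, which follows from~\eqref{eq:prod}. Setting $\bar v_N(\bar z)=\tilde v(f_\Delta^N\iota\bar z)$ for a chosen section $\iota:\bar\Delta\to\Delta$, the points $f_\Delta^Nz$ and $f_\Delta^N\iota\bar\pi z$ lie in a common contracted fiber once the base map $F$ has acted enough times within the $N$ iterates of $f_\Delta$; the variation of $\tilde v$ over this contracted fiber is then controlled by ${|\tilde v|}_\cH$ through the $d(y,y')$ term in the dynamically Hölder seminorm. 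The exceptional "tall column" set where $f_\Delta^N$ has not yet reached the top of its column has $\mu_\Delta$-measure controlled by the tail of $\varphi$.

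With the approximation in hand, set $N=[n/3]$ and use $f_\Delta$-invariance to write
\[
\int_\Delta\tilde v\cdot\tilde w\circ f_\Delta^n\,d\mu_\Delta=\int_\Delta(\tilde v\circ f_\Delta^N)(\tilde w\circ f_\Delta^{n+N})\,d\mu_\Delta.
\]
Replacing each iterate by its one-sided lift, and using the stable contraction once more to identify $\bar w_{n+N}$ with $\bar w_N\circ\bar f_\Delta^n$ modulo a further $\theta^N$-type error, the right-hand side reduces to the one-sided correlation $\int_{\bar\Delta}\bar v_N\cdot\bar w_N\circ\bar f_\Delta^n\,d\bar\mu_\Delta$ at time $n$ (using $\bar\pi_*\mu_\Delta=\bar\mu_\Delta$). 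Theorem~\ref{thm:main}(a) applied to this correlation yields the main asymptotic $\bvarphi^{-1}\sum_{j>n}\mu_Y(\varphi>j)\int\bar v_N\,d\bar\mu_\Delta\int\bar w_N\,d\bar\mu_\Delta$ together with an error of order $\gamma_n+\zeta_{\beta'}(n)$. The choice $N=[n/3]$ keeps the approximation errors (including the long-column contribution of order $\mu_Y(\varphi>N)$) at the level of $\gamma_{[n/3]}$, and a final check that $\int\bar v_N\,d\bar\mu_\Delta=\int v\,d\mu+O(\theta^N)$ (with the analogous identity for $\bar w_N$) identifies the main term as in the statement. Part~(b) proceeds identically using Theorem~\ref{thm:main}(b).

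The main obstacle will be the approximation lemma. The two-sided dynamical Hölder seminorm~${|v|}_\cH$ does not a priori provide fiber-wise smallness, so the whole scheme relies on forward iteration contracting fibers at rate $\theta^j$ after $j$ applications of the base map $F$; but the polynomial tail $\mu_Y(\varphi>n)=O(n^{-\beta'})$ makes the exceptional "tall column" set of non-negligible measure, and a naive union bound only gives $\sum_{j>N}\mu_Y(\varphi>j)\approx N^{1-\beta'}$, which is too crude for the claimed $\gamma_{[n/3]}$ rate. Careful bookkeeping of the good-set/bad-set split, together with uniform control of $\|\bar v_N\|_\theta$ in $N$ (so that constants in Theorem~\ref{thm:main} can be invoked independently of $n$), will be the technical crux.
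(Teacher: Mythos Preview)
Your broad outline — lift to the mixing tower, approximate two-sided observables by one-sided ones via forward iteration, then invoke the one-sided theorem — matches the paper's strategy. But two structural points are missing, and they are exactly what makes the paper's argument work.

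First, after approximation your observables $\bar v_N,\bar w_N$ are supported in $\bar f_\Delta^{-N}\bZ$, \emph{not} in $\bZ$. Theorems~\ref{thm:main} and~\ref{thm:Delta} require the support condition $\supp\subset\hX$ (or $\bZ$), so you cannot apply them directly to $\bar v_N,\bar w_N$. The paper repairs this in two different ways. For the mean-zero case~(b) it works with $L^k\bar v_k$ (Proposition~\ref{prop:tv}(b,d)), which lands back in $\bZ$ with controlled $\|\cdot\|_\theta$-norm; after subtracting a multiple of $1_\bZ$ to restore mean zero, Theorem~\ref{thm:Delta}(b) applies. For part~(a) this is not enough: the paper first splits $\tv=a1_{\hX}+v_0$, $\tw=b1_{\hX}+w_0$ with $v_0,w_0$ mean zero, so that the leading contribution $\rho^*_{a1_{\hX},b1_{\hX}}$ is genuinely one-sided and Theorem~\ref{thm:Delta}(a) applies verbatim. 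The cross term $\rho^*_{1_{\hX},w_0}$ then requires the separate and substantially harder Lemma~\ref{lem:A2}, which re-enters the proof of Theorem~\ref{thm:Delta}(a) and controls the problematic $A_2$-sum for the non-$\bZ$-supported $\bar w_k$ by exploiting $\int w_0=0$. Your proposal does not anticipate this split or the need for Lemma~\ref{lem:A2}.

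Second, you correctly flag that the naive approximation bound $\sum_{j>N}\mu_Y(\varphi>j)\approx N^{-(\beta'-1)}$ is too weak, but you do not supply the fix. The key is that the approximation error is $|\tv|_\theta\,\theta^{\psi_n}$ pointwise, and one needs the sharp estimate $\int_{\bDelta}\theta^{\psi_n}1_{\bZ}\circ\bar f_\Delta^n\,d\bar\mu_\Delta\ll\sigma_{nd}$ (Lemma~\ref{lem:Ltheta}). This is proved by a renewal decomposition $L_\theta^n=C(n)+D_\theta(n)$ of the weighted transfer operator, not by a good-set/bad-set split. Without this ingredient the $\gamma_{[n/3]}$ rate is out of reach.
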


\begin{rmk} \label{rmk:NUH}
The classical Smale-Williams solenoid construction can be adapted (see for example~\cite[Section~5]{AlvesPinheiro08} and~\cite[Example~4.2]{MV16}) to construct intermittent maps that are the invertible analogue of the Hu-Vaienti maps in Section~\ref{sec:HV}.  (The constructions in~\cite{AlvesPinheiro08,MV16} are written down for one-dimensional maps but apply equally to multidimensional maps.)  
The resulting solenoidal intermittent maps fall within the two-sided Chernov-Markarian-Zhang framework and have stable and unstable directions of any specified dimension.  Our results yield essentially optimal upper and lower bounds on decay of correlations for these examples.  
Again, the lower bounds are realized by H\"older observables that are supported away from the neutral fixed point.  
\end{rmk}

\subsection{Tower reformulation}
\label{sec:Delta2}

Let $d=\gcd\{\varphi(a):a\in\alpha\}$.  
As in Section~\ref{sec:Delta}, we replace the tower $\Delta=Y^\varphi$ by a mixing tower $\Delta=Y^\Phi$ where $\Phi=d^{-1}\varphi$.  Again we have a measure-preserving semiconjugacy $\pi_M(y,\ell)=g^\ell y$ between 
$(\Delta,f_\Delta,\mu_\Delta)$ and $(M,g,\mu)$ where $g=f^d$.
We consider observables $v:M\to\R$ supported on $X_d=X\cup\dots\cup f^{-(d-1)}X$ and the corresponding lifted observables $\tv=v\circ\pi_M:\Delta\to\R$ supported in $\hX=\pi_M^{-1}X$.

Fix $\theta\in(0,1)$ and define
\begin{equation} \label{eq:norm}
\|\tv\|_\theta=|\tv|_\infty+|\tv|_\theta, \quad 
|\tv|_\theta
 =\sup_{y,y'\in Y,\,y\neq y'}\sup_{0\le\ell\le \Phi(y)-1}
\frac{|\tv(y,\ell)-\tv(y',\ell)|}{d(y,y')+\theta^{s(y,y')}}.
\end{equation}
Let $\cF_\theta(\hX)$ denote the space of observables $\tv$ supported in $\hX$ with $\|\tv\|_\theta<\infty$.

Given $\tv,\tw\in L^\infty(\Delta)$, define
\[
\rho^*_{\tv,\tw}(n)=\int_\Delta \tv\,\tw\circ f_\Delta^n\,d\mu_\Delta.
\]

The counterpart of Theorem~\ref{thm:Delta} is:

\begin{thm} \label{thm:Delta2}
There  is a constant $C>0$ such that for all $n\ge1$,
\begin{itemize}
\item[(a)]
$\Big|\rho^*_{\tv,\tw}(n)-(1+\bPhi^{-1}\sum_{j>n} \mu_Y(\Phi>j))\int_\Delta\tv\,d\mu_\Delta\int_\Delta\tw\,d\mu_\Delta\Big|
\le C\|\tv\|_\theta\|\tw\|_\theta(\gamma_{[n/2]d}+\zeta_{\beta'}(n))$
for all $\tv,\tw\in \cF_\theta(\hX)$, 
\item[(b)]
$|\rho^*_{\tv,\tw}(n)|\le C\|\tv\|_\theta\|\tw\|_\theta\,\gamma_{[n/2]d}$ for all
$\tv,\tw\in \cF_\theta(\hX)$ with $\int_\Delta \tv\,d\mu_\Delta=0$.
\end{itemize}
\end{thm}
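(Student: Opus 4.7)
The plan is to reduce Theorem~\ref{thm:Delta2} to its one-sided counterpart Theorem~\ref{thm:Delta} via an approximation argument that passes from $\Delta$ to $\bar\Delta$ through the semiconjugacy $\bar\pi:\Delta\to\bar\Delta$. This is the standard Sinai--Bowen--Ruelle-type reduction from invertible to noninvertible systems, adapted to the tower setting.

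The key quantitative input is the product structure condition~\eqref{eq:prod}: for $y,y'\in Y$ with $\bar\pi y=\bar\pi y'$ (same stable fiber), \eqref{eq:prod} yields $d(F^n y,F^n y')\le C\theta^n$. Since $\Phi$ is constant on each $\bar\pi^{-1}a$, the points $f_\Delta^n(y,\ell)$ and $f_\Delta^n(y',\ell)$ occupy the same tower level and have underlying $Y$-coordinates $F^jy$, $F^jy'$ for a common $j=j(n,y,\ell)$, hence lie at $d$-distance at most $C\theta^j$. So for a dynamically H\"older $\tw$ supported in $\hX$, one has $|\tw\circ f_\Delta^n(y,\ell)-\tw\circ f_\Delta^n(y',\ell)|\le C\theta^{j(n,y,\ell)}\|\tw\|_\theta$ on stable-fiber pairs. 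Choosing a measurable section $s:\bar Y\to Y$ of $\bar\pi$, we can then define a $\bar\pi$-measurable approximation $\tw^{[k]}:\Delta\to\R$ (constant on stable fibers, descending to $\hat\tw^{[k]}:\bar\Delta\to\R$) with $\|\tw\circ f_\Delta^k-\tw^{[k]}\circ f_\Delta^k\|_{L^\infty}$ small, up to the measure of the set $\{j(k,\cdot)\text{ small}\}$ which is controlled by $\mu_Y(\Phi>k)=O(k^{-\beta'})$.

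With this approximation in hand, I would write $n=2m+r$ with $r\in\{0,1\}$ and use $f_\Delta$-invariance to express
\[
\rho^*_{\tv,\tw}(n)=\int_\Delta \tv\,(\tw\circ f_\Delta^m)\circ f_\Delta^{n-m}\,d\mu_\Delta.
\]
Replacing $\tw\circ f_\Delta^m$ by its $\bar\pi$-measurable approximation $\tw^{[m]}$ and using the semiconjugacy relation $\bar\pi\circ f_\Delta=\bar f_\Delta\circ\bar\pi$ to write $\tw^{[m]}\circ f_\Delta^{n-m}=\hat\tw^{[m]}\circ\bar f_\Delta^{n-m}\circ\bar\pi$, the integral becomes (after conditioning on $\bar\pi$-fibers on the $\tv$ side)
\[
\int_{\bar\Delta} \hat\tv\cdot\hat\tw^{[m]}\circ\bar f_\Delta^{n-m}\,d\bar\mu_\Delta,
\]
where $\hat\tv=E(\tv\mid\bar\pi)$ lies in $\cF_\theta(\hX)$ on $\bar\Delta$ with $\|\hat\tv\|_\theta\ll\|\tv\|_\theta$ (the support property holds because $\hX$ is $\bar\pi$-saturated, and the norm bound uses~\eqref{eq:prod} again). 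Applying Theorem~\ref{thm:Delta}(a) at time $n-m\sim n/2$ gives error $\gamma_{(n-m)d}+\zeta_{\beta'}(n-m)\ll \gamma_{[n/2]d}+\zeta_{\beta'}(n)$. The mean-value terms match after noting that $\int_{\bar\Delta}\hat\tw^{[m]}\,d\bar\mu_\Delta=\int_\Delta \tw^{[m]}\,d\mu_\Delta\approx\int_\Delta\tw\circ f_\Delta^m\,d\mu_\Delta=\int_\Delta\tw\,d\mu_\Delta$ (with error absorbed by the approximation), and that $b(n-m)=b(n)+O(m\,n^{-\beta'})$. Part~(b) follows analogously using Theorem~\ref{thm:Delta}(b) in place of~(a).

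The main obstacle will be in the approximation step: because $\Phi$ has only polynomial tails, the stable-fiber contraction over $m$ tower steps fails on a set of polynomial measure $\mu_Y(\Phi>m)$, so the $L^\infty$ error cannot be made exponentially small. The argument must balance the exponential contraction $\theta^{j(m,\cdot)}$ where it holds against the polynomial tail $\mu_Y(\Phi>m)$ where it does not, and must also verify that $\hat\tw^{[m]}$ lies in $\cF_\theta(\hX)$ on $\bar\Delta$ with norm $\ll\|\tw\|_\theta$ uniformly in $m$. This is the technical content of Subsection~\ref{sec:approx}, after which the proof in Subsection~\ref{sec:proof2} reduces to carefully tracking the error terms through the one-sided estimate.
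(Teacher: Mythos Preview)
Your general direction (Sinai--Bowen--Ruelle reduction to the one-sided tower) is correct, and for part~(b) your scheme is close to the paper's proof of Lemma~\ref{lem:Delta2}(b): approximate both observables after $k\approx n/2$ iterates, reduce to a mean-zero one-sided correlation, and apply Theorem~\ref{thm:Delta}(b). However, for part~(a) there is a genuine gap.

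The problem is the time shift in the main term. Applying Theorem~\ref{thm:Delta}(a) at time $n-m$ with $m\sim n/2$ produces the coefficient $b(n-m)=1+\bPhi^{-1}\sum_{j>n-m}\mu_Y(\Phi>j)$, whereas the statement requires $b(n)$. The discrepancy
\[
b(n-m)-b(n)=\bPhi^{-1}\sum_{n-m<j\le n}\mu_Y(\Phi>j)\approx n^{-(\beta'-1)}
\]
is \emph{not} dominated by $\gamma_{[n/2]d}+\zeta_{\beta'}(n)$: for every $\beta'>1$ one has $n^{-(\beta'-1)}\gg\zeta_{\beta'}(n)$, and $\gamma_n=O(n^{-(\beta'-\eps)})$ in general. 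So your ``$b(n-m)=b(n)+O(m\,n^{-\beta'})$'' error, multiplied by $\int\tv\int\tw$, is too large to absorb.

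The paper avoids this by a structural decomposition \emph{before} approximating: write $\tv=a1_{\hX}+v_0$ and $\tw=b1_{\hX}+w_0$ with $v_0,w_0$ mean-zero. Since $1_{\hX}=1_{\bZ}\circ\bar\pi$ is already $\bar\pi$-measurable, the term $\rho^*_{a1_{\hX},b1_{\hX}}(n)$ is a one-sided correlation at the \emph{full} time $n$, and Theorem~\ref{thm:Delta}(a) gives the correct main term $b(n)\int\tv\int\tw$ directly. The remaining cross terms $\rho^*_{a1_{\hX},w_0}$ and $\rho^*_{v_0,\tw}$ involve a mean-zero factor, so there is no main term to protect and the halving trick is safe (Lemma~\ref{lem:Delta2}). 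Even so, the term $\rho^*_{1_{\hX},w_0}(n)$ requires a delicate estimate (Lemma~\ref{lem:A2}), because the approximant $\bar w_k$ of $w_0$ is \emph{not} supported in $\bZ$, so Theorem~\ref{thm:Delta}(a) does not apply and one must revisit the renewal decomposition and control the $A_2$-type term directly.

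A secondary issue: your claim that $\hat\tv=E(\tv\mid\bar\pi)$ lies in $\cF_\theta(\bZ)$ with $\|\hat\tv\|_\theta\ll\|\tv\|_\theta$ is not justified by~\eqref{eq:prod} alone; it would require H\"older regularity of the stable disintegration, which is not part of the hypotheses. The paper instead uses the explicit approximation $\tv_k(p)=\inf\{\tv\circ f_\Delta^k(q):s(p,q)\ge 2\psi_k(p)\}$ and shows (Proposition~\ref{prop:tv}(d)) that $L^k\bar v_k$, not $\bar v_k$ itself, has uniformly bounded one-sided H\"older norm.
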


Theorem~\ref{thm:main2} is a direct consequence of Theorem~\ref{thm:Delta2} in exactly the same way that
Theorem~\ref{thm:main} was a direct consequence of Theorem~\ref{thm:Delta}.
Hence we omit the details except to mention that we make use of the estimate
$\gamma_{[n/(2d)]d}\ll \gamma_{[n/3]}$.

The key steps in the proof of Theorem~\ref{thm:Delta2} are contained in the following result.

\begin{lemma}
\label{lem:Delta2}
There is a constant $C>0$ such that for all $n\ge1$:
 \begin{itemize}
 \item[(a)]  
 $|\rho^*_{1_\hX,\tw}(n)|\le C\|\tw\|_\theta(\sigma_{[n/2]d}+\zeta_{\beta'}(n))$ for all
 $\tw\in \cF_\theta(\hX)$ with $\int_\Delta \tw\,d\mu_\Delta=0$,
 \item[(b)] 
$|\rho^*_{\tv,\tw}(n)|
\le C\|\tv\|_\theta\|\tw\|_\theta\,\gamma_{[n/2]d}
$
for all $\tv,\tw\in\cF_\theta(\hX)$ with $\int_\Delta \tv\,d\mu_\Delta=0$.
 \end{itemize}
\end{lemma}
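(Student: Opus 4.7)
The plan is to reduce both parts of the lemma to the one-sided Theorem~\ref{thm:Delta} via the stable-leaf approximation developed in Subsection~\ref{sec:approx}. The key mechanism is condition~\eqref{eq:prod}: after $k$ forward iterates, points in a common stable leaf lie within distance $C\theta^k$, so a dynamically H\"older observable is $\theta^k$-close in $L^\infty$ to one that factors through the semiconjugacy $\bar\pi\colon\Delta\to\bar\Delta$.

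Fix $k=[n/2]$. For $\tu\in\cF_\theta(\hX)$, the approximation furnishes $\bar u_k\in\cF_{\theta'}(\bar\pi(\hX))$ for some $\theta'\in(0,1)$, with $\|\bar u_k\|_{\theta'}\ll\|\tu\|_\theta$ and
\[
|\tu\circ f_\Delta^k-\bar u_k\circ\bar\pi|_\infty\ll\|\tu\|_\theta\,\theta^k;
\]
the $\hX$-support is preserved since $\hX=\pi_M^{-1}(X_d)$ is $\bar\pi$-saturated (a union of cells of the tower partition). Using $\mu_\Delta$-invariance to write $\rho^*_{\tv,\tw}(n)=\int_\Delta(\tv\circ f_\Delta^k)(\tw\circ f_\Delta^k)\circ f_\Delta^n\,d\mu_\Delta$, replacing each factor by its approximation, and descending via $\bar\pi_*\mu_\Delta=\bar\mu_\Delta$, I obtain
\[
\rho^*_{\tv,\tw}(n)=\int_{\bar\Delta}\bar v_k\cdot\bar w_k\circ\bar f_\Delta^n\,d\bar\mu_\Delta+O\bigl(\|\tv\|_\theta\|\tw\|_\theta\,\theta^k\bigr).
\]

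For part~(b), $\int_\Delta\tv\,d\mu_\Delta=0$ forces $|\int_{\bar\Delta}\bar v_k\,d\bar\mu_\Delta|\ll\|\tv\|_\theta\,\theta^k$; replacing $\bar v_k$ by $\bar v_k$ minus this small mean produces a zero-mean observable on $\bar\Delta$, and Theorem~\ref{thm:Delta}(b) gives the bound $\|\tv\|_\theta\|\tw\|_\theta\,\gamma_{nd}$. Since $\theta^k\ll\gamma_{[n/2]d}$, the approximation error is absorbed and (b) follows. For part~(a), $\tv=1_{\hX}$ already descends exactly to $1_{\bar\pi(\hX)}$ (no approximation); only $\tw$ is approximated, and after subtracting its $O(\theta^k)$ mean correction one applies Theorem~\ref{thm:Delta}(a) with $\int_{\bar\Delta}\bar w_k\,d\bar\mu_\Delta=0$. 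The principal term $b(n)\,\mu(\hX)\int\bar w_k$ vanishes; the H\"older remainder contributes $O(\zeta_{\beta'}(n))$ via Corollary~\ref{cor:Hn}(a), and the $J_0$-term contributes $O(\sigma_{nd})$ via Lemma~\ref{lem:JA}(a). A careful tracking of the residual $E(n)$ terms in the proof of Theorem~\ref{thm:Delta}(a), exploiting that $\sum_n V(n)(y)\le\sigma(y)$ when $\tv=1_{\hX}$ (Proposition~\ref{prop:hX}), shows that the bound reduces to $\sigma_{[n/2]d}+\zeta_{\beta'}(n)$ as claimed.

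The main technical obstacle is the construction and verification of the stable-leaf approximation: one must choose measurable representatives on stable leaves, preserve $\hX$-support, control the norm degradation from $\cF_\theta$ to $\cF_{\theta'}$, and ensure that $f_\Delta^k$ acts uniformly across each stable leaf---which holds because $\Phi$ is constant on $\bar\pi^{-1}(a)$ for every $a\in\alpha$, so that orbits starting on a common stable leaf have synchronous itineraries through the tower. A secondary subtlety in part~(a) is extracting $\sigma_{nd}$ rather than $\gamma_{nd}$ from the $E(n)$ expansion of the one-sided proof, which requires using the pointwise $\sigma$-bound on $V(n)$ afforded by $\tv=1_{\hX}$ beyond the generic estimate $PV(n)=O(n^{-\beta'})$ employed in Section~\ref{sec:X}.
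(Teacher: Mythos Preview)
Your proposal rests on a false reading of condition~\eqref{eq:prod}. That estimate governs iterates of the \emph{induced} map $F=f^\varphi$, not the tower map $f_\Delta$: after $k$ steps of $f_\Delta$ an orbit has made only $\psi_k$ visits to the base $Y$, so the stable contraction is $\theta^{\psi_k}$, not $\theta^k$. In particular there is no $L^\infty$ bound of order $\theta^k$; one has only the pointwise bound $|\tu\circ f_\Delta^k-\tu_k|\ll\|\tu\|_\theta\,\theta^{\psi_k}$ (Proposition~\ref{prop:tv}(c)), and $\psi_k$ can vanish on a set of positive measure. Passing to an $L^1$ bound requires the operator calculation in Lemma~\ref{lem:Ltheta}, which yields $|\tu\circ f_\Delta^k-\tu_k|_1\ll\|\tu\|_\theta\,\sigma_{kd}$ (Corollary~\ref{cor:Ltheta}) --- polynomial, not exponential. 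With this correction the decomposition for part~(b) survives, since $\sigma_{kd}\le\gamma_{kd}$, but your ``$O(\theta^k)$'' errors throughout must be replaced by $O(\sigma_{kd})$ and justified via Lemma~\ref{lem:Ltheta}.

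Part~(a) has a further gap. The approximant $\bar w_k$ is \emph{not} supported in $\bZ=\bar\pi(\hX)$: by Proposition~\ref{prop:tv}(b) one only has $\supp\tw_k\subset f_\Delta^{-k}\hX$, so $\bar w_k$ lives on $\bar f_\Delta^{-k}\bZ$. Theorem~\ref{thm:Delta}(a) requires the second observable to be in $L^\infty(\hX)$, and this is exactly where the support hypothesis is used --- in the bound $|A_2(n)|_1\le|\tw|_\infty\sigma_{nd}$ of Lemma~\ref{lem:JA}(c). When you rerun the proof of Theorem~\ref{thm:Delta}(a) with $\bar w_k$ in place of $\tw$, the term $P\hA_{2,k}$ is no longer controlled by $\sigma_{md}$. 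The paper supplies the missing estimate in Lemma~\ref{lem:A2}: exploiting $\int_\Delta\tw\,d\mu_\Delta=0$ and a further decomposition involving Theorem~\ref{thm:upper}, one shows $|P\bA_{2,k}(m)|\ll\|\tw\|_\theta(\sigma_{kd}+m^{-\beta'}k^{\beta'}\zeta_{\beta'}(k))$. Your sketch does not identify this obstruction or the mechanism that overcomes it; the claim that ``$\sum_n V(n)(y)\le\sigma(y)$ when $\tv=1_\hX$'' addresses $A_1$, not the problematic $A_2$.
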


The proof of Lemma~\ref{lem:Delta2} takes up most of the remainder of this section.
Assuming this result, we can complete the proof of Theorem~\ref{thm:Delta2}.

\begin{pfof}{Theorem~\ref{thm:Delta2}}
Let $a=\mu_\Delta(\hX)^{-1}\int_\Delta \tv\,d\mu_\Delta$, $b=\mu_\Delta(\hX)^{-1}\int_\Delta \tw\,d\mu_\Delta$ and define
$v_0=\tv-a1_\hX$ and $w_0=\tw-b1_\hX$.  Then $\int_\Delta v_0\,d\mu_\Delta=\int_\Delta w_0\,d\mu_\Delta=0$ and
\[
\rho^*_{\tv,\tw}=\rho^*_{a1_\hX,b1_\hX}+\rho^*_{a1_\hX,w_0}+ \rho^*_{v_0,\tw}.
\]
Note that
\[
\rho_{a1_\hX,b1_\hX}^*(n)=
\int_\Delta a1_\hX\, b1_\hX\circ f_\Delta^n \,d\mu_\Delta=
\int_\bDelta a1_\bZ\, b1_\bZ\circ \bar f_\Delta^n \,d\bar\mu_\Delta
\]
so by Theorem~\ref{thm:Delta}(a),
\[
\Big|\rho^*_{a1_\hX,b1_\hX}(n)-\Big(1+\bPhi^{-1}\sum_{j>n} \mu_Y(\Phi>j)\Big)\int_\Delta \tv\,d\mu_\Delta \int_\Delta \tw\,d\mu_\Delta\Big|
 \ll |\tv|_\infty|\tw|_\infty(\gamma_{nd}+\zeta_{\beta'}(n)).
\]
By Lemma~\ref{lem:Delta2},
$|\rho^*_{a1_\hX,w_0}(n)|\ll
 |\tv|_\infty\|\tw\|_\theta(\sigma_{[n/2]d}+\zeta_{\beta'}(n))$ and
$|\rho^*_{v_0,\tw}(n)|\ll \|\tv\|_\theta \|\tw\|_\theta\gamma_{[n/2]d}$.
Part (a) follows from these combined estimates.

If in addition, $\int_\Delta \tv\,d\mu_\Delta=0$, then 
$\rho^*_{\tv,\tw}(n) =\rho^*_{v_0,\tw}(n)$ yielding part~(b). 
\end{pfof}

\begin{prop} \label{prop:hXbar}
Let $p,p'\in \Delta$ with $\bar\pi p=\bar\pi p'$.
Then $p\in\hX$ if and only if $p'\in\hX$.
\end{prop}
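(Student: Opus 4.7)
The plan is to trace through the definitions and reduce the claim to a statement about the times at which $y\in Y$ returns to $X$ under $f$. Since $\bar\pi(y,\ell)=(\bar\pi y,\ell)$, the hypothesis $\bar\pi p=\bar\pi p'$ forces $p=(y,\ell)$ and $p'=(y',\ell)$ with the same tower height $\ell$, $\bar\pi y=\bar\pi y'$, and (because $\Phi$ is constant on $\bar\pi^{-1}a$) $0\le\ell<\Phi(y)=\Phi(y')$. By definition of $\hX=\pi_M^{-1}(X_d)$ with $\pi_M(y,\ell)=f^{d\ell}y$ and $X_d=\bigcup_{i=0}^{d-1}f^{-i}X$, the point $p$ lies in $\hX$ iff $f^{d\ell+i}y\in X$ for some $0\le i<d$, and similarly for $p'$.

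It therefore suffices to show that the set $\{j\in[0,\varphi(y)):f^jy\in X\}$ depends only on $\bar\pi y$. This is where the two-sided Chernov-Markarian-Zhang hypotheses enter. Since $\sigma$ is constant on partition elements of $\alpha$, we have $\sigma(y)=\sigma(y')$. Since $h$ is constant on $f_X^k\bar\pi^{-1}a$ for each $a\in\alpha$, $0\le k<\sigma(a)$, the values $h(f_X^ky)$ depend only on $\bar\pi y$ and $k$, so the partial sums $h_k(y)=\sum_{i=0}^{k-1}h(f_X^iy)$ satisfy $h_k(y)=h_k(y')$ for all $0\le k\le\sigma(y)$. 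From $\varphi=h_\sigma$ and the fact that $y$ returns to $X$ under $f$ in the window $[0,\varphi(y))$ precisely at the times $\{h_k(y):0\le k<\sigma(y)\}$, the two return-time sets coincide.

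Finally, since $d\ell+i<d\Phi(y)=\varphi(y)$ for $0\le i<d$, the equivalence of the return sets gives $f^{d\ell+i}y\in X$ iff $f^{d\ell+i}y'\in X$ for each such $i$, hence $p\in\hX$ iff $p'\in\hX$. There is no real obstacle here — the argument is pure bookkeeping, and the one essential input is the structural requirement, built into the Chernov-Markarian-Zhang setup, that $h$ be constant on each set $f_X^k\bar\pi^{-1}a$ (which is what allows the partial returns $h_k$ to descend to $\bar Y$).
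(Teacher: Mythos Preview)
Your proof is correct and follows essentially the same approach as the paper: both arguments express the condition $(y,\ell)\in\hX$ in terms of whether some partial return sum $h_k(y)=\sum_{j=0}^{k-1}h(f_X^jy)$ falls in the window $[d\ell,(\ell+1)d)$, and then invoke the structural hypothesis that $h$ is constant on each $f_X^k\bar\pi^{-1}a$ to conclude these sums depend only on $\bar\pi y$. Your write-up is simply a more detailed unpacking of the same idea.
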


\begin{proof}
Write $p=(y,\ell)$, $p'=(y',\ell)$ where $\bar \pi y=\bar\pi y'$ and
 $\ell\in\{0,\dots,\Phi(y)-1\}$.
Since $h$ is the first return time under $f$ to $X$,  we have $g^\ell y\in X_d$ if and only if 
$\ell d \le \sum_{j=0}^{k-1}h(f_X^jy)<(\ell+1)d$ 
for some $k=0,\dots,\sigma(a)-1$.  Now use that $h$ is constant on $f_X^j\bar\pi^{-1}(\bar\pi y)$.
\end{proof}

\noindent By Proposition~\ref{prop:hXbar}, we can write
$\hX=\bar\pi^{-1}\bZ$ where $\bZ=\bar\pi\hX\subset\bDelta$.
By Proposition~\ref{prop:hX},
\begin{equation} \label{eq:sigma}
\sum_{\ell=0}^{\Phi(\bar y)-1}1_\bZ(\bar y,\ell)
\le \sigma(\bar y)
\quad\text{for all $\bar y\in\bY$.}
\end{equation}

\subsection{Approximation by one-sided observables}
\label{sec:approx}

In this subsection, we show how to approximate two-sided observables by one-sided observables,
broadly following the method used in~\cite[Appendix~B]{MT14} which was in turn based on a private communication by Gou\"ezel.  Using this we prove
Lemma~\ref{lem:Delta2}(b).

Extend the separation time $s$ on $Y$ to $\Delta$ by setting $s((y,\ell),(y',\ell'))=s(y,y')$ when $\ell=\ell'$ and $0$ otherwise.
Let $\psi_n=\sum_{j=0}^{n-1}1_Y\circ f_\Delta^j$ be the number of entries to $Y$.
For $\tv\in L^\infty(\Delta)$, 
we approximate $\tv\circ f_\Delta^n$ by 
\[
\tv_n:\Delta\to\R, \qquad 
\tv_n(p)=\inf\{\tv\circ f_\Delta^n(q):s(p,q)\ge 2\psi_n(p)\}, \quad n\ge1.
\]

Let $L:L^1(\bDelta)\to L^1(\bDelta)$ denote the transfer operator for $\bar f_\Delta$.
\begin{prop} \label{prop:tv}
The function $\tv_n$ lies in $L^\infty(\Delta)$ and projects down to an
observable $\bar v_n\in L^\infty(\bar\Delta)$.
Moreover, there exists $C>0$ such that 
for all $\tv\in\cH(\Delta)$, $n\ge1$,
\begin{itemize}
\item[(a)] $|\bar v_n|_\infty=|\tv_n|_\infty\le |\tv|_\infty$.
\item[(b)]
If $\supp \tv\subset \hX$, then $\supp \tv_n\subset f_\Delta^{-n}\hX$
and 
$\supp L^n\bar v_n\subset \bZ$.
\item[(c)]
$|\tv\circ f_\Delta^n-\tv_n|\le C|\tv|_\theta\, \theta^{\psi_n}$.
\item[(d)] $|(L^n\bar v_n)(\bar p_1)-(L^n\bar v_n)(\bar p_2)|\le C\|\tv\|_\theta\,\theta^{s(\bar p_1,\bar p_2)}$ for all $\bar p_1,\bar p_2\in\bDelta$.
\end{itemize}
\end{prop}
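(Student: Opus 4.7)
The plan is to verify (a)--(d) in order, exploiting that the defining $\inf$-constraint $s(p,q)\ge 2\psi_n(p)$ pins $q$ close to $p$ in a sense compatible with the Gibbs-Markov structure. Parts (a) and (c) will follow directly from the definition together with the contraction in~\eqref{eq:prod}; (b) will use the cylinder structure of $\hX$; and (d) will be the main obstacle, requiring a transfer-operator calculation to upgrade the merely bounded $\bar v_n$ to a H\"older function on $\bDelta$.

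For (a), I first check that $\tv_n$ descends to $\bDelta$: the separation time $s(p,q)$ depends only on $\bar\pi p,\bar\pi q$ by its definition, and $\psi_n(p)$ depends only on $\bar\pi p$ (as both are determined by the orbit's passage through partition elements), so the constraint set, and hence $\tv_n(p)$, depends only on $\bar\pi p$. This yields a well-defined $\bar v_n\in L^\infty(\bDelta)$ with $|\bar v_n|_\infty=|\tv_n|_\infty\le|\tv|_\infty$. For (c), taking $q=p$ gives $\tv_n(p)\le\tv(f_\Delta^n p)$. For the reverse, any admissible $q$ has the property that $f_\Delta^n p$ and $f_\Delta^n q$ land at the same tower level (since $\Phi$ is constant on partition elements, and the constraint forces $\bar\pi y,\bar\pi y'$ into a common element), with base points $y_n,y_n'\in Y$ whose separation time is at least $\psi_n(p)$. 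Applying~\eqref{eq:prod} at the $\psi_n(p)$-th return yields $d(y_n,y_n')\ll\theta^{\psi_n(p)}$, and~\eqref{eq:norm} then gives $|\tv(f_\Delta^n p)-\tv(f_\Delta^n q)|\ll|\tv|_\theta\,\theta^{\psi_n(p)}$. Taking the infimum produces (c).

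For (b), the essential point is that $\hX$ is a union of Gibbs-Markov cylinders: by Proposition~\ref{prop:hXbar}, $\hX=\bar\pi^{-1}\bZ$, and whether $(y,\ell)\in\hX$ is determined by the partition element of $\bar\pi y$ together with $\ell$, using that $h$ is constant on $f_X^j\bar\pi^{-1}a$ for $a\in\alpha$. Consequently, when $s(p,q)\ge 2\psi_n(p)$, the images $f_\Delta^n p$ and $f_\Delta^n q$ lie at the same tower level with base points in a common element of $\alpha$, so $f_\Delta^n p\in\hX$ iff $f_\Delta^n q\in\hX$. In particular, if $f_\Delta^n p\notin\hX$ then every admissible $q$ contributes $\tv(f_\Delta^n q)=0$ to the infimum, whence $\tv_n(p)=0$. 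The second inclusion follows from the standard support-transport bound together with the semiconjugacy: $\supp L^n\bar v_n\subset\bar f_\Delta^n(\supp\bar v_n)\subset\bar f_\Delta^n\bar\pi(f_\Delta^{-n}\hX)=\bar\pi\hX=\bZ$.

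I expect (d) to be the main obstacle, since $\bar v_n$ is only bounded and must be promoted to H\"older through $L^n$. Writing $(L^n\bar v_n)(\bar p)=\sum_{\bar q\in\bar f_\Delta^{-n}\bar p}\xi_n(\bar q)\bar v_n(\bar q)$ and subtracting the two sums for $\bar p_1,\bar p_2\in\bDelta$ with $s(\bar p_1,\bar p_2)=k$, the preimages pair bijectively with paired $\bar q_1,\bar q_2$ whose separation time exceeds $k$, so the Gibbs-Markov distortion estimate~\eqref{eq:GM} gives $|\xi_n(\bar q_1)-\xi_n(\bar q_2)|\ll\xi_n(\bar q_1)\theta^k$. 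The delicate step is to bound $|\bar v_n(\bar q_1)-\bar v_n(\bar q_2)|$ by $\|\tv\|_\theta\theta^k$ at each paired preimage: I would split into cases, noting that when $k\ge 2\psi_n(\bar q_1)$ the two defining infima have identical constraint sets so the difference vanishes, while for $k<2\psi_n(\bar q_1)$ the estimate (c) combined with~\eqref{eq:norm} reduces the comparison to values of $\tv$ at close points, controlled by $|\tv|_\theta\theta^k$. Summing over preimages and using $\sum_{\bar q}\xi_n(\bar q)=1$ then yields the claimed H\"older estimate.
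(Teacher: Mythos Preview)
Your approach matches the paper's, and parts (a)--(c) are handled correctly. There is one slip in the case split for (d): you split at $k\ge 2\psi_n(\bar q_1)$, but the paired preimages satisfy $s(\bar q_1,\bar q_2)=\psi_n(\bar q_1)+k$, so the constraint sets already coincide (and $\bar v_n(\bar q_1)=\bar v_n(\bar q_2)$) once $k\ge\psi_n(\bar q_1)$. With your threshold, the range $\psi_n(\bar q_1)\le k<2\psi_n(\bar q_1)$ is sent to the triangle-inequality branch, but there the combination of (c) and~\eqref{eq:norm} only gives $|\bar v_n(\bar q_1)-\bar v_n(\bar q_2)|\ll|\tv|_\theta(\theta^{\psi_n(\bar q_1)}+\theta^k)\ll|\tv|_\theta\,\theta^{\psi_n(\bar q_1)}$, which is weaker than the required $\theta^k$ when $k>\psi_n(\bar q_1)$. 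Correcting the threshold to $k\ge\psi_n(\bar q_1)$ fixes this: the remaining case $k<\psi_n(\bar q_1)$ is exactly what your triangle-inequality argument handles, since then $\theta^{\psi_n(\bar q_1)}\le\theta^k$. The paper does precisely this, assuming without loss that $s(q_1,q_2)\le 2\psi_n(\bar q_1)$ (equivalently $k\le\psi_n(\bar q_1)$) and then comparing the minimizers $\hat q_1,\hat q_2$ directly via~\eqref{eq:prod}.
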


\begin{proof}
If $s(p,p')\ge 2\psi_n(p)$, then $\tv_n(p)=\tv_n(p')$.
It follows that $\tv_n$ is piecewise constant on a measurable partition
of $\Delta$, and hence is measurable, and that $\bar v_n$ is well-defined.
Parts (a) and (b) are immediate.

Let $p=(y,\ell)\in\Delta$.  Then
\[
|\tv\circ f_\Delta^n(p)-\tv_n(p)| =|\tv\circ f_\Delta^n(p)-\tv\circ f_\Delta^n(q)|
\]
where $q=(z,\ell)$ is such that $s(p,q)\ge 2\psi_n(p)$.
Now,
$f_\Delta^np=(F^{\psi_n(p)}y,\ell_1)$ where $\ell_1=\ell+n-\Phi_{\psi_n(p)}(p)$
and similarly 
$f_\Delta^nq=(F^{\psi_n(p)}z,\ell_1)$.
(Here, $\Psi_k=\sum_{j=0}^{k-1}\Psi\circ F^j$.)
By definition of $|\tv|_\theta$ and~\eqref{eq:prod},
\begin{align*}
|\tv\circ f_\Delta^n(p)-\tv_n(p)| & =
|\tv(F^{\psi_n(p)}y,\ell_1) -\tv(F^{\psi_n(p)}z,\ell_1)|
\\ & \le |\tv|_\theta(d(F^{\psi_n(p)}y,F^{\psi_n(p)}z)+\theta^{s(F^{\psi_n(p)}y,F^{\psi_n(p)}z)})
\\ & \ll |\tv|_\theta (\theta^{\psi_n(p)}+\theta^{s(y,z)-\psi_n(p)})
\ll |\tv|_\theta \theta^{\psi_n(p)}.
\end{align*}
This proves part~(c).

To prove (d), recall that
$(L^n\bar v_n)(\bar p)=\sum_{\bar f_\Delta^n\bar q=\bar p}\xi_n(\bar q)\bar v_n(\bar q)$
where $\xi$ is the weight function.
Write
\begin{align*}  
(L^n\bar v_n)(\bar p_1)- (L^n\bar v_n)(\bar p_2) & = I_1+I_2
\end{align*}
where
\[
I_1  = \sum_{\bar f_\Delta^n\bar q_1=\bar p_1}(\xi_n(\bar q_1)-\xi_n(\bar q_2))\bar v_n(\bar q_2), \qquad
I_2  = \sum_{\bar f_\Delta^n\bar q_1=\bar p_1}\xi_n(\bar q_1)(\bar v_n(\bar q_1)-\bar v_n(\bar q_2)).
\]
As usual, we pair up preimages so that
\begin{equation} \label{eq:pairs}
s(\bar q_1,\bar q_2)=\psi_n(\bar q_1)+s(\bar p_1,\bar p_2).
\end{equation}

A standard argument shows that 
$|\xi_n(\bar q_1)-\xi_n(\bar q_2)|\ll \xi_n(\bar q_1)\theta^{s(\bar p_1,\bar p_2)}$.
Hence, 
\[
|I_1|\ll |\tv|_\infty\, \theta^{s(\bar p_1,\bar p_2)}.
\]

Next, choose $q_j\in\Delta$ that project to $\bar q_j$ and write
\[
\bar v_n(\bar q_1)-\bar v_n(\bar q_2)=
\tv\circ f_\Delta^n(\hat q_1 )-\tv\circ f_\Delta^n(\hat q_2),
\]
where
$\hat q_1,\hat q_2\in\Delta$ satisfy
\begin{equation} \label{eq:qhat}
s(\hat q_j,q_j)\ge 2\psi_n(\bar q_j) =2\psi_n(\hat q_j).
\end{equation}
As in part (c),
\begin{equation} \label{eq:min}
|\tv\circ f_\Delta^n(\hat q_1)-\tv\circ f_\Delta^n(\hat q_2)| 
\ll |\tv|_\theta
(\theta^{\psi_n(\hat q_1)}
+\theta^{s(\hat q_1,\hat q_2)-\psi_n(\hat q_1)}).
\end{equation}
Since $\bar v_n(\bar q_1)=\bar v_n(\bar q_2)$ if $s(q_1,q_2)\ge 2\psi_n(\bar q_1)$, we may suppose without loss that
\[
s(q_1,q_2)\le 2\psi_n(\bar q_1)=2\psi_n(\hat q_1).
\]
Then it follows from~\eqref{eq:pairs} that
\begin{equation} \label{eq:wlog}
\psi_n(\hat q_1)\ge s(q_1,q_2)-\psi_n(\bar q_1) 
=s(\bar p_1,\bar p_2).
\end{equation}
By~\eqref{eq:pairs},~\eqref{eq:qhat} and~\eqref{eq:wlog},
\begin{align*}
s(\hat q_1,\hat q_2)
  \ge \min\{s(q_1,q_2),s(\hat q_1,q_1),s(\hat q_2,q_2)\}
& \ge \min\{\psi_n(\bar q_1)+s(\bar p_1,\bar p_2),2\psi_n(\bar q_1)\}
\\ & \ge \psi_n(\bar q_1)+s(\bar p_1,\bar p_2).
\end{align*}
Substituting this and~\eqref{eq:wlog} into~\eqref{eq:min},
$|\bar v_n(\bar q_1)-\bar v_n(\bar q_2)|
\ll |\tv|_\theta \theta^{s(\bar p_1,\bar p_2)}$.
Hence 
\[
|I_2|\ll |\tv|_\theta\, \theta^{s(\bar p_1,\bar p_2)}
\]
completing the proof.
\end{proof}

We continue to suppose that $\mu_Y(\varphi>n)=O(n^{-\beta'})$ for some $\beta'>1$ with
$\sigma_n$ defined as in~\eqref{eq:gamma}.
Define the operators $R(n):L^\infty(\bY)\to L^\infty(\bY)$, 
\[
R(n)u=L^n(1_{\{\Phi=n\}}u)=R(1_{\{\Phi=n\}}u),\;n\ge1.
\]
Using~\eqref{eq:GM},
a standard calculation~\cite{Gouezel04a,Sarig02} yields:

\begin{prop} \label{prop:GS}
There exists $C>0$ such that
\[
|R(n)|_{L^\infty(\bY)} \le C \mu_Y(\Phi=n)
\]
and
\[
|(R(n)1_\bY)(y)- (R(n)1_\bY)(y')| \le C \mu_Y(\Phi=n) \theta^{s(y,y')}
\]
for all $y,y'\in \bY$, $n\ge1$.  \qed
\end{prop}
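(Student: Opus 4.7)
The plan is to unwind $R(n)$ into its inverse-branch expansion and then apply the Gibbs-Markov distortion estimates~\eqref{eq:GM} termwise. Recall from Section~\ref{sec:prel} that the transfer operator $R$ for $\bF:\bY\to\bY$ is given by
\[
(Rv)(y) = \suma \xi(y_a) v(y_a), \qquad y_a = \bF^{-1}(y)\cap a.
\]
Because $\Phi$ is constant on partition elements of $\alpha$, the indicator $1_{\{\Phi=n\}}$ is a sum of indicators $1_a$ with $\Phi(a)=n$, and the level set $\{\Phi=n\}$ is a union of such $a\in\alpha$. Hence
\[
(R(n)u)(y) = \sum_{a\in\alpha,\,\Phi(a)=n} \xi(y_a)u(y_a).
\]

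For the sup-norm estimate I would simply bound $|u(y_a)|\le |u|_\infty$ and apply the first inequality in~\eqref{eq:GM} to obtain $\xi(y_a)\le C\mu_Y(a)$. Summing over $a$ with $\Phi(a)=n$ gives
\[
|R(n)u(y)| \le C |u|_\infty \!\!\sum_{a:\,\Phi(a)=n}\!\!\mu_Y(a) = C|u|_\infty\,\mu_Y(\Phi=n),
\]
which is the first assertion.

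For the Lipschitz-type estimate, specialize to $u=1_\bY$. Then
\[
(R(n)1_\bY)(y) - (R(n)1_\bY)(y') \;=\!\! \sum_{a:\,\Phi(a)=n}\!\!\bigl(\xi(y_a)-\xi(y'_a)\bigr),
\]
where preimages are paired up as $y_a,y'_a\in a$ with $\bF y_a=y$, $\bF y'_a=y'$, so $s(y_a,y'_a)=s(y,y')+1\ge s(y,y')$. The second inequality in~\eqref{eq:GM} yields
\[
|\xi(y_a)-\xi(y'_a)|\le C\mu_Y(a)\,\theta^{s(y_a,y'_a)}\le C\mu_Y(a)\,\theta^{s(y,y')}.
\]
Summing over $a$ with $\Phi(a)=n$ produces the claimed bound $C\mu_Y(\Phi=n)\theta^{s(y,y')}$.

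There is no real obstacle here; this is a templated Gibbs-Markov calculation of the kind already invoked, for instance, in the proof of Proposition~\ref{prop:Vn}. The only point requiring any care is the pairing of inverse branches so that the separation time behaves correctly, i.e.\ $s(y_a,y'_a)\ge s(y,y')$, which is immediate from the definition of $s$ and the fact that $a$ is a single partition element.
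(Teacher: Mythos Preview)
Your proof is correct and is precisely the ``standard calculation'' the paper alludes to: the paper does not spell out a proof but simply cites~\eqref{eq:GM} and~\cite{Gouezel04a,Sarig02}, and your termwise application of the distortion bounds to the inverse-branch expansion of $R(n)$ is exactly that calculation (and mirrors the argument already given for Proposition~\ref{prop:Vn}).
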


\begin{lemma} \label{lem:Ltheta}
There exists $C>0$ such that
$|\int_\bDelta \theta^{\psi_n}1_\bZ\circ \bar f_\Delta^n\,d\bar\mu_\Delta|\le C \sigma_{nd}$ and
$|\int_\bDelta 1_\bY \theta^{\psi_n}\,d\bar\mu_\Delta|\le C n^{-\beta'}$ for all $n\ge1$.
\end{lemma}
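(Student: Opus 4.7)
The second bound is a direct computation on the base. By the product structure, $\int_\bDelta 1_\bY \theta^{\psi_n}\,d\bar\mu_\Delta = \bPhi^{-1}\int_\bY \theta^{\psi_n(\bar y,0)}\,d\bar\mu_Y$. For $(\bar y,0)\in\bY$, one has $\psi_n(\bar y,0) = k+1$ precisely when $\Phi_k(\bar y) < n \le \Phi_{k+1}(\bar y)$, writing $\Phi_k := \sum_{j=0}^{k-1}\Phi\circ F^j$. Partitioning by $k$ and relaxing to $\{\Phi_{k+1}\ge n\}$,
\begin{align*}
\int_\bY \theta^{\psi_n(\bar y,0)}\,d\bar\mu_Y \le \sum_{k\ge 0}\theta^{k+1}\,\bar\mu_Y(\Phi_{k+1}\ge n).
\end{align*}
A union bound combined with $\bar\mu_Y(\Phi>m)=O(m^{-\beta'})$ gives $\bar\mu_Y(\Phi_{k+1}\ge n) \le (k+1)\bar\mu_Y(\Phi \ge n/(k+1)) = O((k+1)^{1+\beta'}n^{-\beta'})$, and summing the convergent series in $k$ yields the claimed $O(n^{-\beta'})$.

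For the first bound I would partition $\bDelta$ by whether the forward orbit of $p = (\bar y,\ell)$ reaches $\bY$ within $n$ iterations of $\bar f_\Delta$. In the no-return regime ($\ell\ge 1$ and $\Phi(\bar y)\ge \ell+n$) one has $\psi_n(p) = 0$ and $\bar f_\Delta^n p = (\bar y,\ell+n)$, so summing $1_\bZ(\bar y,\ell+n)$ over the admissible range and invoking~\eqref{eq:sigma} yields $\bPhi^{-1}\int_\bY \sigma(\bar y)\,1_{\{\Phi>n\}}\,d\bar\mu_Y = \bPhi^{-1}\sigma_{nd}$, exactly the target. In the return regime, let $\tau_1\in[0,n-1]$ denote the first return time; after time $\tau_1$ the orbit sits at a base point $(\bar z,0)\in\bY$ (with $\bar z=F\bar y$ if $\ell\ge 1$ and $\bar z=\bar y$ if $\ell=0$) and $\psi_n(p) = \psi_{n-\tau_1}(\bar z,0)$. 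Using the Gibbs--Markov bound $R(1_{\{\Phi>\tau_1\}})\le C\bar\mu_Y(\Phi>\tau_1)$ from~\eqref{eq:GM} (with $R$ the transfer operator of $F$) to shift the integration from $\bar y$ to $\bar z$, the return contribution is controlled by $C\bPhi^{-1}\sum_{\tau_1=0}^{n-1}\bar\mu_Y(\Phi>\tau_1)\,\alpha_{n-\tau_1}$, where $\alpha_m := \int_\bY \theta^{\psi_m(\bar z,0)}\,1_\bZ(\bar f_\Delta^m(\bar z,0))\,d\bar\mu_Y$.

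The cleanest bound on $\alpha_m$ comes from dropping the $1_\bZ$ factor, which reduces $\alpha_m$ to the integral already controlled in the second bound: $\alpha_m \le \int_\bY \theta^{\psi_m(\bar z,0)}\,d\bar\mu_Y = O(m^{-\beta'})$. Feeding this into the return estimate and invoking $n^{-\beta'}\star n^{-\beta'} = O(n^{-\beta'})$ produces an overall bound of the form $C(\sigma_{nd} + n^{-\beta'})$. The main obstacle is to absorb the residual $n^{-\beta'}$ into $C\sigma_{nd}$: since $\sigma\ge 1$ always gives $\sigma_{nd}\ge \bar\mu_Y(\varphi>nd)$, and in the settings where the lemma is applied the tails satisfy $\bar\mu_Y(\varphi>nd)\gtrsim n^{-\beta'}$, the absorption is automatic and the stated inequality follows; in full generality one may alternatively record the refined bound $C(\sigma_{nd}+n^{-\beta'})$, which is all that downstream applications actually require.
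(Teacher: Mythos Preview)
Your argument is correct and takes a genuinely different route from the paper. The paper rewrites both integrals in terms of a weighted transfer operator $L_\theta$ with $L_\theta^n u = L^n(\theta^{\psi_n}u)$, then applies a renewal decomposition $L_\theta^n = C(n) + D_\theta(n)$ with $D_\theta(n) = A(n)\star T_\theta(n)\star B_\theta(n)$; the key technical input is that the spectral radius of $\widehat R_\theta(z)$ on $L^\infty(\bY)$ is at most $\theta<1$ for $z\in\barD$, so Gou\"ezel's theorem~\cite[Theorem~A.3]{Gouezel05} yields $|T_\theta(n)|_{L^\infty(\bY)}\ll n^{-\beta'}$. The operator $C(n)$ plays exactly the role of your no-return regime, and $D_\theta(n)$ corresponds to your return regime. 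By contrast, your proof is entirely elementary: a union bound on $\{\Phi_{k+1}\ge n\}$ replaces the operator machinery for the second estimate, and a single first-return decomposition plus a change of variables via $R$ handles the first. Your approach is self-contained and avoids the citation to~\cite{Gouezel05}; the paper's approach is more systematic and the decomposition $L_\theta^n=C(n)+D_\theta(n)$ is reusable (it appears again, for instance, in Appendix~\ref{app:upper}).

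On the absorption of the residual $n^{-\beta'}$: you are right to flag it, and in fact the paper's own proof produces the same residual, obtaining $|1_\bZ D_\theta(n)1|_1\ll n^{-\beta'}$ alongside $|1_\bZ C(n)1|_1\le\bPhi^{-1}\sigma_{nd}$. The paper records ``Clearly $n^{-\beta'}\le\sigma_n\le\gamma_n$'' just before Proposition~\ref{prop:int}, which is the absorption you describe; as you note, this relies on a matching lower tail bound that holds in all the applications. Your observation that the downstream estimates only use quantities dominating $n^{-\beta'}$ (namely $\gamma_n\ge \bar\sigma\, n^{-\beta'}$) is also correct, so either reading suffices.
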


\begin{proof}
Define
$L_\theta:L^1(\bDelta)\to L^1(\bDelta)$ by $L_\theta u=L(\theta^{1_Y} u)$.
Then $L_\theta^n u=L^n(\theta^{\psi_n} u)$ and
\begin{align*}
|\theta^{\psi_n}\,1_\bZ\circ f_\Delta^n|_1
=\int_\bDelta L^n\theta^{\psi_n}\,1_\bZ\,d\bar\mu_\Delta
=\int_\bDelta 1_\bZ L_\theta^n1\,d\bar\mu_\Delta.
\end{align*}
Similarly, 
$|1_Y\theta^{\psi_n}|_1
=\int_\bDelta L_\theta^n1_\bY\,d\bar\mu_\Delta$.
Hence it suffices to show that
$|1_\bZ L_\theta^n1|_1\ll \sigma_{nd}$ and
$|L_\theta^n1_\bY|_1\ll n^{-\beta'}$.

In analogy with Section~\ref{sec:TR}, we define
the renewal operators $R_\theta(n),\,T_\theta(n):L^\infty(\bY)\to L^\infty(\bY)$, 
\[
R_\theta(n)u=L_\theta^n(1_{\{\Phi=n\}}u)=\theta R(n)u,\;n\ge1,\qquad
T_\theta(n)u=1_Y L_\theta^n(1_Y u),\,n\ge0,
\]
and the corresponding Fourier series
$\hR_\theta(z),\,\hT_\theta(z):L^\infty(\bY)\to L^\infty(\bY)$, for $z\in\D$,
\[
\hR_\theta(z)=\theta\sum_{n=1}^\infty R(n)z^n,\qquad
\hT_\theta(z)=\sum_{n=0}^\infty T_\theta(n)z^n.
\]
Again, $\hT_\theta=(I-\hR_\theta)^{-1}$ on $\D$.
Moreover, the spectral radius of $\hR_\theta(z)$
on $L^\infty(\bY)$ is at most $\theta$ for $z\in\barD$,
 so $I-\hR_\theta(z)$ is invertible as an operator on $L^\infty(\bY)$ for all $z\in\barD$.
By Proposition~\ref{prop:GS},
${|R(n)|}_{L^\infty(\bY)}\ll n^{-\beta'}$.
It follows from~\cite[Theorem~A.3]{Gouezel05} that
${|T_\theta(n)|}_{L^\infty(\bY)}\ll n^{-\beta'}$.

Next, as in~\cite[Eq.~(11)]{Gouezel05}, we have the decomposition\footnote{The notation here, which is chosen to mimic that in~\cite{Gouezel05}, is local to the proof of this Lemma and should not be confused with similar notation elsewhere in the paper.}
\[
L_\theta^n=C(n)+D_\theta(n), \qquad
D_\theta(n)=A(n)\star T_\theta(n)\star B_\theta(n),
\]
where 
\[
A(n):L^\infty(\bY)\to L^1(\bDelta), \quad
B_\theta(n):L^\infty(\bDelta)\to L^\infty(\bY), \quad
C(n):L^\infty(\bDelta)\to L^1(\bDelta),
\]
are given by
\[
(A(n)u)(y,\ell)=\begin{cases} u(y) & \ell=n \\ 0 & \text{else}
\end{cases},
\qquad
(C(n)u)(y,\ell)=\begin{cases} u(y,\ell-n) & \ell>n \\ 0 & \text{else}
\end{cases},
\]
and
\[
(B_\theta(n)u)(y)=\theta\sum_{a\in\alpha}\xi(y_a)1_{\{\Phi(a)>n\}}u(y_a,\Phi(a)-n).
\]
Here $\xi$ satisfies~\eqref{eq:GM}.
Hence
\[
\int_\bDelta|A(n)u|\,d\bar\mu_\Delta\le |u|_\infty \bar\mu_\Delta \{(y, n):y\in Y\}=\bPhi^{-1}|u|_\infty\mu_Y(\Phi>n),
\]
and
\[
|B_\theta(n)u|_\infty\ll |u|_\infty\sum_{a\in\alpha}\mu_Y(a)1_{\{\Phi(a)>n\}}
=|u|_\infty\,\mu_Y(\Phi>n).
\]
In other words,
\[
{|A(n)|}_{L^\infty(\bY)\mapsto L^1(\bDelta)}\le \mu_Y(\Phi>n),
\quad
{|B_\theta(n)|}_{L^\infty(\bDelta)\mapsto L^\infty(\bY)}\ll \mu_Y(\Phi>n).
\]
Combining this with the estimate for $T_\theta(n)$ we obtain 
$|D_\theta(n)1|_1\ll n^{-\beta'}$.
In particular,
\[
|1_\bZ D_\theta(n)1|_1 \ll n^{-\beta'}
\quad\text{and}\quad
|D_\theta(n)1_\bY|_1 \ll n^{-\beta'}.
\]

Finally, 
\[
|(C(n)u)(y,\ell)|=1_{\{\ell>n\}}|u(y,\ell-n)|\le 1_{\{\Phi(y)>n\}}|u|_\infty,
\]
so 
\[
|1_\bZ C(n)1|_1\le 
\bPhi^{-1}\int_\bY 1_{\{\Phi(y)>n\}}\sum_{\ell=0}^{\Phi(y)-1}1_{\bZ}(y,\ell)\,d\bar\mu_Y
\le \bPhi^{-1}\int_\bY 1_{\{\Phi>n\}}\sigma\,d\bar\mu_Y
=\bPhi^{-1}\sigma_{nd}
\]  
by~\eqref{eq:sigma}.
Also $C(n)1_\bY\equiv0$.
This completes the proof.
\end{proof}

\begin{cor} \label{cor:Ltheta}
There exists $C>0$ such that
\[
|\tv\circ f_\Delta^n-\tv_n|_1\le C|\tv|_\theta\sigma_{nd}
\quad\text{for all $\tv\in\cF_\theta(\hX)$, $n\ge1$.}
\]
\end{cor}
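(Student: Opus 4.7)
The plan is short because Proposition~\ref{prop:tv}(c) already gives a pointwise bound and Lemma~\ref{lem:Ltheta} gives the integrated bound on $\theta^{\psi_n}$; the only real task is to introduce the indicator of $\hX$ into the integral so as to exploit Lemma~\ref{lem:Ltheta}(a) rather than (b).

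First I would observe that both $\tv\circ f_\Delta^n$ and $\tv_n$ are supported on $f_\Delta^{-n}\hX$: the former because $\supp\tv\subset\hX$, the latter by Proposition~\ref{prop:tv}(b). Combining this with the pointwise bound from Proposition~\ref{prop:tv}(c) gives
\[
|\tv\circ f_\Delta^n-\tv_n|\le C|\tv|_\theta\,\theta^{\psi_n}\,1_{f_\Delta^{-n}\hX},
\]
so that
\[
|\tv\circ f_\Delta^n-\tv_n|_1\le C|\tv|_\theta\int_\Delta \theta^{\psi_n}\,1_{\hX}\circ f_\Delta^n\,d\mu_\Delta.
\]

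Next I would descend the integral to $\bar\Delta$. The counting function $\psi_n=\sum_{j=0}^{n-1}1_Y\circ f_\Delta^j$ depends only on the $\bY$-coordinate because $\bar\pi:\Delta\to\bar\Delta$ is a semiconjugacy with $\bar\pi^{-1}\bY=Y$, and by Proposition~\ref{prop:hXbar} we have $\hX=\bar\pi^{-1}\bZ$. Hence
\[
\theta^{\psi_n}\,1_{\hX}\circ f_\Delta^n=\bigl(\theta^{\psi_n}\,1_{\bZ}\circ \bar f_\Delta^n\bigr)\circ\bar\pi,
\]
and using $\bar\pi_*\mu_\Delta=\bar\mu_\Delta$,
\[
\int_\Delta \theta^{\psi_n}\,1_{\hX}\circ f_\Delta^n\,d\mu_\Delta=\int_{\bar\Delta}\theta^{\psi_n}\,1_{\bZ}\circ \bar f_\Delta^n\,d\bar\mu_\Delta.
\]

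Finally I would invoke Lemma~\ref{lem:Ltheta}(a) to bound the right-hand side by $C\sigma_{nd}$, which closes the estimate. No step here looks delicate; the only thing that must be checked carefully is the reduction of $\psi_n$ and $1_{\hX}$ to functions on $\bar\Delta$, and both reductions are immediate from the definitions of $\hX$, $\bZ$, and the semiconjugacy $\bar\pi$.
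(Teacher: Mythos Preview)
Your proposal is correct and follows essentially the same route as the paper: use Proposition~\ref{prop:tv}(b,c) to get the pointwise bound $|\tv\circ f_\Delta^n-\tv_n|\ll |\tv|_\theta\,\theta^{\psi_n}1_{\hX}\circ f_\Delta^n$, pass to $\bar\Delta$ via the semiconjugacy (using that $\psi_n$ and $1_{\hX}=1_{\bZ}\circ\bar\pi$ descend), and apply the first estimate of Lemma~\ref{lem:Ltheta}. The only cosmetic point is that Lemma~\ref{lem:Ltheta} is not stated with labelled parts (a) and (b); you are invoking its first inequality.
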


\begin{proof}
Note that $\theta^{\psi_n}$ is well-defined on $\bDelta$.  
By Proposition~\ref{prop:tv}(b,c), 
\[
\int_\Delta|\tv\circ f_\Delta^n-\tv_n|\,d\mu_\Delta
\ll |\tv|_\theta\int_\Delta \theta^{\psi_n}\,1_\hX\circ f_\Delta^n\,d\mu_\Delta
= |\tv|_\theta\int_\bDelta \theta^{\psi_n}\,1_\bZ\circ \bar f_\Delta^n\,d\bar\mu_\Delta.
\]
Hence the result follows from Lemma~\ref{lem:Ltheta}.
\end{proof}

\begin{pfof}{Lemma~\ref{lem:Delta2}(b)}
For $k\ge1$, let $a_k=\mu_\Delta(\hX)^{-1}\int_\Delta\tv_k\,d\mu_\Delta$.
Write
$\rho^*_{\tv,\tw}(n)=\int_\Delta \tv\circ f_\Delta^k\,\tw\circ f_\Delta^{k+n}\,d\mu_\Delta=I_1(k,n)+I_2(k,n)+I_3(k,n)+I_4(k,n)$,
where
\begin{align*}
I_1(k,n) & =\int_\Delta (\tv\circ f_\Delta^k-\tv_k)\,\tw\circ f_\Delta^{k+n}\,d\mu_\Delta, \\
I_2(k,n)  & =\int_\Delta \tv_k\,(\tw\circ f_\Delta^k-\tw_k)\circ f_\Delta^{n}\,d\mu_\Delta, \\ 
I_3(k,n)  &= \mu_\Delta(\hX)^{-1}\int_\Delta \tv_k\,d\mu_\Delta\,\int_\Delta 1_\hX\circ f^k\,\tw_k\circ f_\Delta^n\,d\mu_\Delta, \\ 
I_4(k,n)  & = \int_\Delta (\tv_k-a_k1_\hX\circ f_\Delta^k)\,\tw_k\circ f_\Delta^{n}\,d\mu_\Delta.
\nonumber
\end{align*}

By assumption, $\int_\Delta \tv\,d\mu_\Delta=0$.
Hence $\int_\Delta \tv_k\,d\mu_\Delta=
\int_\Delta (\tv_k-\tv\circ f_\Delta^k)\,d\mu_\Delta$.
By Corollary~\ref{cor:Ltheta},
\[
|I_1(k,n)|\ll |\tv|_\theta |\tw|_\infty  \sigma_{k d}, \quad
|I_2(k,n)|\ll |\tv|_\infty |\tw|_\theta \sigma_{k d}, \quad
|I_3(k,n)|\ll |\tv|_\theta |\tw|_\infty  \sigma_{k d}.
\]

Now,
\begin{equation} \label{eq:I4}
I_4(k,n)  = \int_\bDelta (\bar v_k-a_k1_\bZ\circ\bar f_\Delta^k)\,\bar w_k\circ \bar f_\Delta^{n}\,d\bar\mu_\Delta
 = \int_\bDelta u_k\,\bar w_k\circ \bar f_\Delta^{n-k}\,d\bar\mu_\Delta
\end{equation}
where 
\[
u_k=L^k(\bar v_k-a_k1_\bZ\circ\bar f_\Delta^k)
=L^k\bar v_k-a_k 1_\bZ.
\]
Note that $I_4$ is defined on $\bDelta$ and $u_k$ is supported in $\bZ$ (by Proposition~\ref{prop:tv}(b)) with
$\int_\bDelta u_k\,d\bar\mu_\Delta=0$.  Hence in the notation of Section~\ref{sec:Delta},
\(
I_4(k,n)=\rho^*_{u_k,\bar w_k}(n-k),
\)
and by Theorem~\ref{thm:Delta}(b),
\[
|I_4(k,n)|\ll 
\|u_k\|_\theta |\bar w_k|_\infty \gamma_{(n-k)d}.
\]
By Proposition~\ref{prop:tv}(a), $|u_k|_\infty\le 2|\tv|_\infty$ and
$|\bar w_k|_\infty\le |\tw|_\infty$.
The same argument as in the proof of Proposition~\ref{prop:hXbar} shows that
$1_\bZ(y,\ell)=1_\bZ(y',\ell)$ whenever $s(y,y')\ge1$ (so $y,y'$ lie in the same partition element).
Hence $|1_\bZ|_\theta\le2$.
By Proposition~\ref{prop:tv}(d), $|u_k|_\theta\le |L^k\bar v_k|_\theta+|a_k||1_\bZ|_\theta\ll \|\tv\|_\theta$.
Hence 
$|I_4(k,n)|\ll \|\tv\|_\theta |\tw|_\infty \gamma_{(n-k)d}$.

The combined estimates for $I_1,\,I_2,\,I_3,\,I_4$ give
$|\rho^*_{\tv,\tw}(n)|\ll \|\tv\|_\theta\|\tw\|_\theta (\sigma_{kd}+\gamma_{(n-k)d})$.
Taking $k=[n/2]$ yields the desired result.
\end{pfof}

\subsection{Completion of the proof of Theorem~\ref{thm:main2}}
\label{sec:proof2}

It remains to prove Lemma~\ref{lem:Delta2}(a).
There is the complication that approximation of observables $\tw$ supported on $\hX$  leads to one-sided observables $\tw_n$ that are not supported on $\hX$.  Hence Theorem~\ref{thm:Delta}(a) is not directly applicable.
The main new idea for dealing with this is the following:

\begin{lemma} \label{lem:A2}
There is a constant $C>0$ such that
\[
\Big|\int_\bY \sum_{\ell=0}^{\Phi(y)-1}1_{\{\ell> n\}} \bar w_k(y,\ell)\,d\bar\mu_Y\Big|
\le C\|\tw\|_\theta(\sigma_{kd}+n^{-\beta'}k^{\beta'}\zeta_{\beta'}(k) )
\]
for all $\tw\in\cF_\theta(\hX)$ with $\int_\Delta \tw \,d\mu_\Delta=0$, $n\ge k\ge1$.
\end{lemma}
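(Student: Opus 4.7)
My plan is to pass to the one-sided tower $\bDelta$, extract the near-vanishing mean of $\bar w_k$ inherited from the hypothesis $\int_\Delta\tw\,d\mu_\Delta=0$, and then use the operator renewal formalism of Section~\ref{sec:TR} to control the resulting centered integral.

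First I would rewrite the left-hand side as $\bPhi\int_{\bDelta}1_{\{\ell>n\}}\bar w_k\,d\bar\mu_\Delta$, using $\tw_k = \bar w_k\circ\bar\pi$ and the product structure $\bar\mu_\Delta = (\bar\mu_Y\times{\rm counting})/\bPhi$. The hypothesis $\int_\Delta\tw\,d\mu_\Delta=0$ combined with Corollary~\ref{cor:Ltheta} yields
\[
|c_k|:=\Big|\int_{\bDelta}\bar w_k\,d\bar\mu_\Delta\Big|=\Big|\int_\Delta(\tw_k-\tw\circ f_\Delta^k)\,d\mu_\Delta\Big|\le C\|\tw\|_\theta\sigma_{kd}.
\]
Subtracting $c_k$ from $\bar w_k$ contributes an error of at most $|c_k|\cdot\bar\mu_\Delta(\{\ell>n\}) = O(\|\tw\|_\theta\sigma_{kd})$, which is absorbed into the target bound. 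It therefore suffices to estimate $\int 1_{\{\ell>n\}}(\bar w_k-c_k)\,d\bar\mu_\Delta$, where the integrand is mean zero on $\bDelta$.

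For the main estimate, I would use the tower shift identity $\bar w_k(y,\ell) = (\bar w_k\circ\bar f_\Delta^k)(y,\ell-k)$ (valid when $\ell\ge k$ and $\ell<\Phi(y)$) together with transfer-operator duality $\int h\cdot(g\circ\bar f_\Delta^k)\,d\bar\mu_\Delta = \int L^k(h)\cdot g\,d\bar\mu_\Delta$ to recast the integral in a form where the renewal decomposition $T(k) = b(k)P + H(k)$ of Lemma~\ref{lem:Tn}(a) applies. The leading $b(k)P$ contribution vanishes thanks to the centering, since the projector $P$ sees only the now-zero mean $\int(\bar w_k-c_k)\,d\bar\mu_\Delta$. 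The renewal-error norm $\|H(k)\|_\theta = O(\zeta_{\beta'}(k))$, combined with the H\"older regularity of $L^k\bar w_k$ from Proposition~\ref{prop:tv}(d) and with the tail-sequence bound $|V(j)|_1 \le \bar\mu_Y(\Phi>n+j)\ll n^{-\beta'}$ arising from $\tv = 1_{\{\ell>n\}}$, should combine to give the factor $n^{-\beta'}k^{\beta'}\zeta_{\beta'}(k)$ after summing the resulting convolutions.

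The main obstacle is the boundary contributions when $\ell+k\ge\Phi(y)$, where the shift identity degenerates and the orbit crosses into a new column via $F$. The naive bound on such boundary contributions is of order $k\cdot\bar\mu_Y(\Phi>n)\approx k\cdot n^{-\beta'}$, which is too weak for $\beta'$ close to $1$. A sharper bound, yielding the target factor $n^{-\beta'}k^{\beta'}\zeta_{\beta'}(k)$, requires using Gibbs--Markov regularity of $\bar w_k\circ\bar f_\Delta^k$ together with the $\theta^{\psi_k}$-weighted estimates of Lemma~\ref{lem:Ltheta} to obtain cancellation between boundary terms at different levels; this is where the bulk of the technical work will lie.
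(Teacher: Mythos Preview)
Your centering step (subtracting $c_k=\int_{\bDelta}\bar w_k\,d\bar\mu_\Delta$ and bounding $|c_k|$ via Corollary~\ref{cor:Ltheta}) is fine, but the rest of the plan does not actually set up a correlation at time $k$ to which the renewal decomposition $T(k)=b(k)P+H(k)$ could be applied. The ``shift identity'' $\bar w_k(y,\ell)=(\bar w_k\circ\bar f_\Delta^k)(y,\ell-k)$ is a tautology (it just says $\bar f_\Delta^k(y,\ell-k)=(y,\ell)$ within a column) and does not produce an expression of the form $\int h\cdot(g\circ\bar f_\Delta^k)\,d\bar\mu_\Delta$. Without that, there is no $T(k)$ in sight, and the claim that ``the leading $b(k)P$ contribution vanishes'' has no content: $P$ acts on $\cF_\theta(\bY)$, not on functions on $\bDelta$, and nothing in your outline lands in $\bY$.

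The paper's route is structurally different and you are missing its two key moves. First, rather than stay with $\bar w_k$, it reverts to $\tw\circ f_\Delta^k$ (paying $O(\sigma_{kd})$ via Corollary~\ref{cor:Ltheta}); this is a genuine dynamical shift, and combined with the support condition $\supp\tw\subset\hX$ it makes the interior part (your ``$\ell+k<\Phi$'' region) trivially $O(\sigma_{kd})$. Second, for the boundary part $\Phi-k\le\ell<\Phi$, the orbit passes through $F$ and the integral becomes $\sum_{j=0}^{k-1}\int_\bY\Psi_{k+n-j}\,\bar w_j(\cdot,0)\,d\bar\mu_Y$ with $\Psi_m=R1_{\{\Phi>m\}}$. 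Each summand is then rewritten as a correlation $\int_\Delta u_{k+n-j}\,\tw\circ f_\Delta^j\,d\mu_\Delta$ on the \emph{two-sided} tower with $\|u_m\|_\theta\ll\mu_Y(\Phi>m)$, and the mean-zero hypothesis on $\tw$ is exploited through the upper bound Theorem~\ref{thm:upper} to get the decay $j^{-(\beta'-1)}$. It is the sum $\sum_{j<k}j^{-(\beta'-1)}\approx k^{\beta'}\zeta_{\beta'}(k)$ that produces the second term in the statement. Your sketch never invokes Theorem~\ref{thm:upper} and so has no mechanism for turning $\int\tw=0$ into the required $j^{-(\beta'-1)}$ gain on the boundary; the $\theta^{\psi_k}$-weighted estimates you mention handle only a subsidiary approximation error ($E_1$ in the paper), not the main boundary contribution.
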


\begin{proof}
Write $\int_\bY \sum_{\ell=0}^{\Phi(y)-1}1_{\{\ell> n\}} \bar w_k(y,\ell)\,d\bar\mu_Y =B_1+B_2$ where
\[
B_1  =\int_Y \sum_{\ell=0}^{\Phi-1}1_{\{\ell> n\}} (\tw_k-\tw\circ f_\Delta^k)(\cdot,\ell)\,d\mu_Y,  \qquad
B_2  =\int_Y
\sum_{\ell=0}^{\Phi-1}1_{\{\ell>n\}} \tw\circ f_\Delta^k(\cdot,\ell)\,d\mu_Y.
\]
By Corollary~\ref{cor:Ltheta},
\(
|B_1|\le \bPhi\int_\Delta |\tw_k-\tw\circ f_\Delta^k|\,d\mu_\Delta
\ll |\tw|_\theta\, \sigma_{kd}.
\)

Write $B_2=B_2'+B_2''$
where
\begin{align*}
B_2' & =\int_Y 1_{\{\Phi>n\}} \sum_{\ell=0}^{\Phi-k-1}1_{\{\ell>n\}}\tw\circ f_\Delta^k(\cdot,\ell)\,d\mu_Y,  \\
B_2''  & =\int_Y 1_{\{\Phi>n\}}\sum_{\ell=\Phi-k}^{\Phi-1}1_{\{\ell>n\}}\tw\circ f_\Delta^k(\cdot,\ell)\,d\mu_Y.
\end{align*}
Since $\tw=\tw 1_\hX$ and using~\eqref{eq:sigma},
\begin{align*}
|B_2'| & 
\le 
  |\tw|_\infty \int_\bY  \sum_{\ell=0}^{\Phi-1} 1_{\{\ell>k+n\}} 1_\bZ(\cdot,\ell)\,d\bar\mu_Y
  \le |\tw|_\infty \int_\bY 1_{\{\Phi>k\}} \sum_{\ell=0}^{\Phi-1}1_\bZ(\cdot,\ell)\,d\bar\mu_Y
 \\ & \le |\tw|_\infty \int_\bY 1_{\{\Phi>k\}} \sigma\,d\bar\mu_Y
= |\tw|_\infty\sigma_{kd}.
\end{align*}

Next,
\begin{align*}
B_2'' & = \int_Y 1_{\{\Phi>n\}} \sum_{\ell=\Phi-k}^{\Phi-1} 1_{\{\ell>n\}} \tw\circ f_\Delta^{\ell+k}(\cdot,0) \,d\mu_Y
  = \int_Y 1_{\{\Phi>n\}} \sum_{j=\Phi}^{\Phi+k-1} 1_{\{j>k+n\}} \tw\circ f_\Delta^{j}(\cdot,0) \,d\mu_Y
 \\ & = \int_Y 1_{\{\Phi>n\}} \sum_{j=0}^{k-1} 1_{\{j>k+n-\Phi\}} \tw\circ f_\Delta^{j}(F\cdot,0) \,d\mu_Y =E_1+E_2
\end{align*}
where
\begin{align*}
E_1 & =  \int_Y \sum_{j=0}^{k-1} 1_{\{\Phi>k+n-j\}}(\tw\circ f_\Delta^{j}-\tw_j)(F\cdot,0) \,d\mu_Y,  \\
E_2 & =
  \int_Y \sum_{j=0}^{k-1} 1_{\{\Phi>k+n-j\}}\tw_j(F\cdot,0) \,d\mu_Y.
\end{align*}
By Proposition~\ref{prop:tv}(c),
\[
|E_1|  \ll |\tw|_\theta \sum_{j=0}^{k-1}\int_Y1_{\{\Phi>n\}}\theta^{\psi_j\circ F}\,d\mu_Y
= |\tw|_\theta \sum_{j=0}^{k-1}\int_\bY \Psi_n\,\theta^{\psi_j}\,d\bar\mu_Y,
\]
where $\Psi_n=R1_{\{\Phi>n\}}$.
By Proposition~\ref{prop:GS} and Lemma~\ref{lem:Ltheta},
\[
\int_\bY \Psi_n\,\theta^{\psi_j}\,d\bar\mu_Y
 \le |\Psi_n|_\infty 
\int_\bDelta 1_{\bY} \theta^{\psi_j}\,d\bar\mu_\Delta
 \ll \mu_Y(\Phi>n) j^{-\beta'}.
\]
Hence $|E_1|\ll |\tw|_\theta \mu_Y(\Phi>n)$.

It remains to deal with $E_2$.  Now
\begin{align*}
 \int_Y1_{\{\Phi>k+n-j\}}\tw_j(F\cdot,0) \,d\mu_Y & =
 \int_\bY1_{\{\Phi>k+n-j\}}\bar w_j(\bF\cdot,0) \,d\bar\mu_Y 
=\int_\bY \Psi_{k+n-j} \bar w_j(\cdot,0) \,d\bar\mu_Y.
\end{align*}
Define $\bar u_n:\bDelta\to\R$, 
$u_n:\Delta\to\R$, 
\[
\bar u_n(y,\ell)=\begin{cases}
\Psi_n(y) & \ell=0
\\ 0 & \ell>0 \end{cases}, \qquad u_n=\bar u_n\circ\bar\pi.
\]
Then
\begin{align*}
\int_\bY \Psi_{k+n-j}\bar w_j(\cdot,0) \,d\bar\mu_Y & =
\bPhi\int_\bDelta \bar u_{k+n-j} \bar w_j \,d\bar\mu_\Delta  =
\bPhi\int_\Delta u_{k+n-j} \tw_j \,d\mu_\Delta
\\ & =\bPhi(G_1(j,k+n)+G_2(j,k+n))
\end{align*}
where
\[
G_1(j,n)=\int_\Delta u_{n-j} (\tw_j-\tw\circ f_\Delta^j) \,d\mu_\Delta, \qquad
G_2(j,n)=\int_\Delta u_{n-j} \tw\circ f_\Delta^j \,d\mu_\Delta.
\]

Applying Proposition~\ref{prop:GS} once more,
\[
|\Psi_n|_{L^\infty(\bY)} \ll \mu_Y(\Phi>n)
\quad\text{and}\quad
|\Psi_n(y)- \Psi_n(y')| \ll \mu_Y(\Phi>n) \theta^{s(y,y')}
\]
for all $y,y'\in \bY$, $n\ge1$.  Hence 
$|u_n|_\infty \ll \mu_Y(\Phi>n)$ and
$|u_n(y,\ell)-u_n(y',\ell)|\ll \mu_Y(\Phi>n) \theta^{s(y,y')}$
for all $y,y'\in Y$, $0\le \ell<\Phi(y)$.  That is, in the notation of Appendix~\ref{app:upper},
$\|u_n\|_\theta\ll \mu_Y(\Phi>n)$.

It follows that
\[
|G_1(j,n)|\le |u_{n-j}|_\infty|1_Y(\tw_j-\tw\circ f_\Delta^j)|_1
\ll |\tw|_\theta\,j^{-\beta'}(n-j)^{-\beta'} .
\]
Since $\tw$ has mean zero, we can apply Theorem~\ref{thm:upper} to obtain
\[
|G_2(j,n)| \ll \|u_{n-j}\|_\theta \|\tw\|_\theta\, j^{-(\beta'-1)}
\ll \|\tw\|_\theta\,j^{-(\beta'-1)}(n-j)^{-\beta'}.
\]
We conclude that
\[
|(G_1+G_2)(j,k+n)| 
\ll \|\tw\|_\theta\, j^{-(\beta'-1)}(k+n-j)^{-\beta'}
\ll \|\tw\|_\theta\, j^{-(\beta'-1)}n^{-\beta'}
\]
for all $j<k$, 
and hence
\[
|E_2(k,n)|\ll \|\tw\|_\theta\,n^{-\beta'}\sum_{j=0}^{k-1}j^{-(\beta'-1)}
\ll \|\tw\|_\theta \, n^{-\beta'}k^{\beta'}\zeta_{\beta'}(k) 
\]
completing the proof.
\end{proof}

\begin{pfof}{Lemma~\ref{lem:Delta2}(a)}
Let $k\ge1$ and write
$\rho^*_{1_\hX,\tw}(n)=I_2(k,n)+I_4(k,n)$,
where
\begin{align*}
I_2(k,n) & =\int_\Delta 1_\hX\,(\tw\circ f_\Delta^k-\tw_k)\circ f_\Delta^{n-k}\,d\mu_\Delta,   \\
 I_4(k,n) &  = \int_{\Delta} 1_\hX\,\tw_k\circ f_\Delta^{n-k}\,d\mu_\Delta
  = \int_{\bDelta} 1_\bZ\,\bar w_k\circ \bar f_\Delta^{n-k}\,d\bar\mu_\Delta.
\end{align*}
By Corollary~\ref{cor:Ltheta},
\(
|I_2(k,n)|\ll |\tw|_\theta\, \sigma_{k d}.
\)

Note that $I_4(k,n)$ is defined on $\bDelta$ and 
in the notation of Section~\ref{sec:Delta},
$I_4(k,n)=\rho^*_{1_\bZ,\bar w_k}(n-k)$.
Hence we can proceed almost as in the proof of Theorem~\ref{thm:Delta}(a), with $\tv$ and $\tw$ replaced by $1_\bZ$ and $\bar w_k$ respectively.  Let $m=n-k$.
Following Proposition~\ref{prop:conv}, we write
\begin{equation} \label{eq:I44}
I_4(k,n)=\bJ_{0,k}(m)+\bPhi^{-1}\int_\bY (T(m)\star R\bV(m))\star\bW_k(m)\,d\bar\mu_Y,
\end{equation}
where
\[
\bV(m)(y)=1_{\{\Phi(y)\ge m\}}1_\bZ(y,\Phi(y)-m),
\qquad
\bW_k(m)(y)= 1_{\{\Phi(y)>m\}}\bar w_k(y,m),
\]
and
\[
\bJ_{0,k}(m)  =\int_\bDelta 1_{\{m+\ell< \Phi(y)\}}1_\bZ(y,\ell)\bar w_k(y,m+\ell)\,d\bar\mu_\Delta.
\]
Also, define
\[
\bA_1(m)(y)=1_{\{\Phi(y)>m\}}\sum_{\ell=0}^{\Phi(y)-m-1}1_\bZ(y,\ell),
\qquad
\bA_{2,k}(m)(y)=\sum_{\ell=0}^{\Phi(y)-1}1_{\{m<\ell\}} \bar w_k(y,\ell).
\]

By Proposition~\ref{prop:tv}(a), $|\bar w_k|_\infty\le |\tw|_\infty$.
Proceeding exactly as in Section~\ref{sec:X}, we obtain the estimates
\begin{align*}
& |\bV(m)|_1\le \mu_Y(\Phi\ge m), \qquad
|\bW_k(m)|_1\le |\tw|_\infty\mu_Y(\Phi>m),
\\
& 
|\bJ_{0,k}(m)|\le \bar\Phi^{-1}|\tw|_\infty\,\sigma_{md},
\qquad 
|\bA_1(m)|_1 \le \sigma_{md}.
\end{align*}
Moreover,
$\bV(m)(y)=\bV(m)(y')$ 
for $y,y'\in a$, $a\in\alpha$.
Hence following the proof of Proposition~\ref{prop:Vn}, $R\bV(m)\in\cF_\theta(\bY)$ with $\|R\bV(m)\|_\theta\ll \mu_Y(\Phi>m)$.

By assumption, $\int_\Delta \tw\,d\mu_\Delta=0$.  Hence by Lemma~\ref{lem:A2}, $|P\bA_{2,k}(m)|\ll  \|\tw\|_\theta(\sigma_{kd}+ m^{-\beta'}k^{\beta'}\zeta_{\beta'}(k))$.

We have now estimated all the expressions arising in the 
proof of Theorem~\ref{thm:Delta}(a).
Continuing as in that proof, we obtain (cf.\ \eqref{eq:proof})
\[
|I_4(k,n)|\ll E_k(m)+\|\tw\|_\theta(\sigma_{md}+\zeta_{\beta'}(m))
\]
where $E_k(m)=b(m)\star PV(m)\star PW_k(m)$ and
\[
\hE_k(z)=\hat b(z)P\hV(1)P\hW_k(1)+(z-1)\hat b(z)\big\{
P\hV(z)P\hA_{2,k}(z)+P\hA_1(z)P\hW_k(1)\big\}.
\]
Recall that the Fourier coefficients of $(z-1)\hat b(z)$ are $O(m^{-\beta'})$.
Also,
\[
P\hW_k(1)=\int_\Delta \tw_k\,d\mu_\Delta=
\int_\Delta (\tw_k-\tw\circ f_\Delta^k)\,d\mu_\Delta 
\ll |\tw|_\theta\,\sigma_{kd} 
\]
by Corollary~\ref{cor:Ltheta}.
Hence
\begin{align*}
|E_k(m)| & \ll 
\|\tw\|_\theta\big(b(m)\sigma_{kd}+m^{-\beta'}\star\{m^{-\beta'}\star (\sigma_{kd}+
m^{-\beta'}k^{\beta'}\zeta_{\beta'}(k) ) +\sigma_{md} \sigma_{kd} \} \big)
\\ &
\ll \|\tw\|_\theta(\sigma_{kd}+m^{-\beta'}k^{\beta'}\zeta_{\beta'}(k) ).
\end{align*}
Hence
\[
|I_4(k,n)| \ll 
 \|\tw\|_\theta(\sigma_{kd}+\sigma_{(n-k)d}+(n-k)^{-\beta'}k^{\beta'}\zeta_{\beta'}(k) +\zeta_{\beta'}(n-k) ).
\]
Combining this with the estimate for $I_2(k,n)$ and taking $k=[n/2]$ yields the desired result.
\end{pfof}

\section{Billiard examples}
\label{sec:billiard}

In this section, we provide details and proofs for the examples considered in Section~\ref{sec:introbill}.
For background material on billiards, we refer to~\cite{ChernovMarkarian}.
The billiard domain, denoted by $Q$, is a compact connected subset of $\R^2$ or $\T^2$ with piecewise smooth boundary and the billiard flow is defined on $Q\times S^1$.   Fix a point $q\in Q$ and a unit vector $v\in S^1$.
Then $q$ moves in straight lines with unit speed in direction $v$ until reflecting (angle of reflection equalling the angle of incidence) off the boundary $\partial Q$.    This defines a volume-preserving flow.
A natural Poincar\'e section is given by $M=\partial Q\times[-\pi/2,\pi/2]$ corresponding to collisions with $\partial Q$ (with outgoing velocities in $[-\pi/2,\pi/2]$).  The Poincar\'e map $f:M\to M$ is called the collision map or the billiard map.  It preserves a probability measure $\mu$, equivalent to Lebesgue, called Liouville measure.

Part of the framework in~\cite{ChernovZhang05,Markarian04}
is that the billiard map $f:M\to M$ has a 
(two-sided) Chernov-Markarian-Zhang structure as defined in Section~\ref{sec:main2}.
In particular, $f$ has a
suitably chosen first return map $f_X=f^h:X\to X$ modelled by a Young tower $\DE=Y^\varphi$ with exponential tails.  
Roughly speaking, $X$ is chosen to be a subset of phase space bounded away from the regions where hyperbolicity is expected to break down, e.g.\  for billiards with cusps, $X$ excludes a neighborhood of each cusp.
Since the specific choice of $X$ involves notation which is not required for understanding the results, we mainly point the reader to the original references for the precise definitions. (An exception is Example~\ref{ex:semi} below, where no extra notation is needed.)

\begin{example}[Bunimovich stadia~\cite{Bunimovich79}] 
\label{ex:stadia}
These are convex billiard domains $Q\subset\R^2$ where $\partial Q$
is a simple closed curve consisting of two parallel line segments and two semicircles.
By~\cite{Markarian04}, 
the billiard map $f:M\to M$ falls within the Chernov-Markarian-Zhang framework
with $\mu_X(h>n)=O(n^{-2})$.  
By~\cite[Theorem~1.1]{ChernovZhang08}, 
 $\mu_Y(\varphi>n)=O(n^{-2})$ and hence
$\rho_{v,w}(n)=O(n^{-1})$ for dynamically H\"older observables.

Here, we improve the estimate on $\mu_Y(\varphi>n)$ and use this to obtain lower bounds on decay of correlations.

\begin{prop} \label{prop:Bun}
For Bunimovich stadia, there exists $c>0$ such that $\bvarphi^{-1}\mu_Y(\varphi>n)\sim cn^{-2}$
and 
$\rho_{v,w}(n)\sim c n^{-1}\int_M v\,d\mu\int_M w\,d\mu$ for all $v,\,w\in\cH(X)$ with nonzero mean.

In addition $\rho_{v,w}(n)=O(n^{-2}\log n)$ for all $v\in \cH(X)$ with $\int v\,d\mu=0$ and all $w\in\cH(X)$.
\end{prop}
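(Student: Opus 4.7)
The plan is to combine the known nonstandard central limit theorem for Bunimovich stadia with Theorem~\ref{thm:main2} and Corollary~\ref{cor:tails}. I proceed in three steps.

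First, I would recall that Markarian's construction places the stadium billiard map into the two-sided Chernov-Markarian-Zhang structure with $\DE=Y^\sigma$ having exponential tails, and that Markarian~\cite{Markarian04} combined with the free flight estimates for the bouncing ball region yield $\mu_X(h>n)\sim \tilde c n^{-2}$ with an explicit $\tilde c$. The key additional input is the nonstandard CLT of B\'alint--Gou\"ezel~\cite{BalintGouezel06}, namely
\[
(n\log n)^{-1/2}\Bigl(\sum_{j=0}^{n-1}h\circ f_X^j-n{\textstyle\int_X}h\,d\mu_X\Bigr)\to_d G,
\]
with $G$ a nondegenerate Gaussian whose variance is given explicitly in terms of the stadium geometry (this is precisely the barely-$L^2$ case arising from the bouncing-ball excursions). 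Applying Corollary~\ref{cor:tails} with $\beta=2$ and $b_n=(n\log n)^{1/2}$, this upgrades to $\mu_Y(\varphi>n)\sim\bar\sigma c_0 n^{-2}$ via Lemma~\ref{lem:CLT}(b), hence $\bvarphi^{-1}\mu_Y(\varphi>n)\sim c n^{-2}$ with $c$ a universal multiple of the limiting variance. A calculation from the B\'alint--Gou\"ezel variance formula then recovers the explicit value $c=\frac{4+3\log 3}{4-3\log 3}\,\frac{\ell^2}{4(\pi+\ell)}$.

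Second, having the sharp tail asymptotic, I would invoke Theorem~\ref{thm:main2}(a) with $\beta'=2$. Because $\sigma$ has exponential tails, Proposition~\ref{prop:int} gives $\gamma_n=O(n^{-2}\log n)$, and $\zeta_2(n)=n^{-2}\log n$, so the error term is $O(\|v\|_\cH\|w\|_\cH n^{-2}\log n)$. Summation of the tail asymptotic yields $\bvarphi^{-1}\sum_{j>n}\mu_Y(\varphi>j)\sim cn^{-1}$; thus
\[
\rho_{v,w}(n)=cn^{-1}{\textstyle\int_M}v\,d\mu{\textstyle\int_M}w\,d\mu+O\bigl(\|v\|_\cH\|w\|_\cH n^{-2}\log n\bigr),
\]
and for $v,w\in\cH(X)$ with nonzero mean the leading term dominates. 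Third, for the zero-mean claim, I would simply apply Theorem~\ref{thm:main2}(b), which gives $|\rho_{v,w}(n)|\le C\|v\|_\cH\|w\|_\cH\gamma_{[n/3]}=O(n^{-2}\log n)$.

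The main obstacle is Step~1: invoking the nonstandard CLT for the \emph{return time} $h$ (rather than for a generic H\"older observable on $X$) on the stadium, verifying that it fits the abstract hypothesis of Corollary~\ref{cor:tails}, and then tracing the explicit variance of the limit Gaussian from~\cite{BalintGouezel06} through Lemma~\ref{lem:CLT}(b) to extract the precise numerical constant $\frac{4+3\log 3}{4-3\log 3}\,\frac{\ell^2}{4(\pi+\ell)}$. Once the asymptotic $\bvarphi^{-1}\mu_Y(\varphi>n)\sim cn^{-2}$ is in hand, everything else is a direct application of Theorem~\ref{thm:main2}.
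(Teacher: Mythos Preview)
Your proposal is correct and follows essentially the same route as the paper: invoke the nonstandard CLT for $h$ from~\cite{BalintGouezel06}, feed it into Corollary~\ref{cor:tails} to get $\mu_Y(\varphi>n)\sim \bar\sigma c_0 n^{-2}$, then apply Theorem~\ref{thm:main2}(a) and~(b) with $\beta'=2$ and $\gamma_n=O(n^{-2}\log n)$ from Proposition~\ref{prop:int}. Your worry in Step~1 about whether the CLT is proved for $h$ itself is unfounded---B\'alint and Gou\"ezel establish it directly for the return time (their $\varphi_+$), so Corollary~\ref{cor:tails} applies immediately; the explicit constant calculation you sketch is carried out in the paper as a separate remark after the proposition, not as part of the proof of existence of $c$.
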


\begin{proof}
In the proof of~\cite[Theorem~1.1]{BalintGouezel06}
(see in particular~\cite[page 504, line 11]{BalintGouezel06})
it is shown for $h:X\to\Z^+$ (denoted there by $\varphi_+$) that
$(n\log n)^{-1/2}(\sum_{j=0}^{n-1}h\circ f_X^j-n\int_X h\,d\mu_X)$ converges to a nondegenerate normal distribution.
Hence the first statement follows from
Corollary~\ref{cor:tails} and the second statement from
Theorem~\ref{thm:main2}(a).

Finally, $\gamma_n=O(n^{-2}\log n)$ by Proposition~\ref{prop:int}, so the final statement follows from
Theorem~\ref{thm:main2}(b).
\end{proof}
\end{example}

\begin{example}[Semidispersing billiards]  
\label{ex:semi}
The billiard domain is given by
$Q=R\setminus\bigcup S_k$ where $R$ is a rectangle and there are finitely many disjoint convex scatterers $S_k\subset R$ with $C^3$ boundaries of nonvanishing curvature.

By~\cite[Theorem~1]{ChernovZhang05}, the billiard map $f:M\to M$  falls within the Chernov-Markarian-Zhang framework
with 
$X=\bigcup\partial S_k\times[-\pi/2,\pi/2]$ and
$\mu_X(h>n)=O(n^{-2})$. 
By~\cite[Theorem~1.1]{ChernovZhang08}, 
$\mu_Y(\varphi>n)=O(n^{-2})$ and hence 
$\rho_{v,w}(n)=O(n^{-1})$ for dynamically H\"older observables.

\begin{prop} \label{prop:semi}
The conclusions of Proposition~\ref{prop:Bun} hold
for semidispersing billiards.
\end{prop}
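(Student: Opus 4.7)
The plan is to mimic the proof of Proposition~\ref{prop:Bun} essentially verbatim, replacing the Bunimovich-stadia input from~\cite{BalintGouezel06} by the corresponding input for semidispersing billiards. The Chernov-Markarian-Zhang structure and the upper bound $\mu_X(h>n)=O(n^{-2})$ are already in place by~\cite{ChernovZhang05,ChernovZhang08}, so the Young tower $\DE=Y^\sigma$ has exponential tails and in particular $\sigma\in L^2(Y)$. Consequently, Corollary~\ref{cor:tails} and Lemma~\ref{lem:CLT} are available and all that is needed to get the sharp tail asymptotic $\bvarphi^{-1}\mu_Y(\varphi>n)\sim c n^{-2}$ is a nondegenerate $(n\log n)^{1/2}$-CLT for ergodic sums of $h$ under $f_X$.

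The first step is therefore to cite such a CLT. The relevant statement for semidispersing billiards (and more generally for the family of Chernov-Zhang type examples treated in~\cite{BalintChernovDolgopyat11}) is~\cite[Corollary~1.1]{BalintChernovDolgopyat11}, which is proved by exactly the same argument as~\cite[Corollary~1.3]{BalintGouezel06} and which gives that $(n\log n)^{-1/2}(\sum_{j=0}^{n-1}h\circ f_X^j-n\int_X h\,d\mu_X)$ converges in distribution to a nondegenerate normal random variable. Nondegeneracy is what ultimately produces the positive constant $c>0$ in the tail asymptotic.

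Given this input, I would then apply Corollary~\ref{cor:tails} with $\beta=2$ and $b_n=(n\log n)^{1/2}$ to conclude $\mu_Y(\varphi>n)\sim\bar\sigma c_0 n^{-2}$ for some $c_0>0$; dividing by $\bvarphi$ yields the first conclusion. For the second conclusion, since $\sigma$ has exponential tails, Proposition~\ref{prop:int} gives $\gamma_n=O(n^{-2}\log n)$, and with $\beta'=2$ we have $\zeta_{\beta'}(n)=O(n^{-2}\log n)$ as well, so the error term $\gamma_{[n/3]}+\zeta_{\beta'}(n)$ in Theorem~\ref{thm:main2}(a) is $O(n^{-2}\log n)=o(n^{-1})$; summing the first-order term $\bvarphi^{-1}\sum_{j>n}\mu_Y(\varphi>j)\sim c n^{-1}$ then yields the asymptotic $\rho_{v,w}(n)\sim c n^{-1}\int v\int w$ for dynamically H\"older $v,w$ supported in $X$.

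Finally, the mean-zero statement follows directly from Theorem~\ref{thm:main2}(b) together with the bound $\gamma_{[n/3]}=O(n^{-2}\log n)$ from Proposition~\ref{prop:int}. The only real content beyond bookkeeping is step one; the main (and only) obstacle is identifying a clean reference for the nondegenerate $(n\log n)^{1/2}$-CLT for the return time $h$ in the semidispersing setting. This is where one must rely on~\cite{BalintChernovDolgopyat11}; once that CLT is cited, everything else is an immediate application of the general machinery developed in Sections~\ref{sec:main2} and~\ref{sec:tail}.
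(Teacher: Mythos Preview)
Your overall strategy is exactly the paper's: reduce everything to a nondegenerate $(n\log n)^{1/2}$-CLT for the first return time $h$, then invoke Corollary~\ref{cor:tails}, Theorem~\ref{thm:main2} and Proposition~\ref{prop:int}. The bookkeeping you outline is correct.

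The one substantive issue is the reference you cite for the CLT. \cite{BalintChernovDolgopyat11} is the paper on dispersing billiards \emph{with cusps}; its Corollary~1.1 concerns optimality of the $n^{-1}$ decay rate (this is how it is cited in the introduction here), not a limit theorem for the return time in semidispersing tables. For the semidispersing case, the key observation in the paper is that the first return time $h$ to $X=\bigcup\partial S_k\times[-\pi/2,\pi/2]$ coincides with the free flight function for the planar periodic Lorentz gas with infinite horizon, and the needed nondegenerate $(n\log n)^{1/2}$-CLT is then supplied by Sz\'asz--Varj\'u~\cite[Theorem~1]{SzaszVarju07} (with background in~\cite{Bleher92}). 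Once you replace your citation by this one, your argument matches the paper's proof verbatim.
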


\begin{proof}
Note that $h$ is precisely the free flight time considered in~\cite{Bleher92,SzaszVarju07}.
By~\cite[Theorem~1]{SzaszVarju07}, $(n\log n)^{-1/2}(\sum_{j=0}^{n-1}h\circ f_X^j-n\int_X h\,d\mu_X)$ converges to a nondegenerate normal distribution.
Now proceed as in the proof of Proposition~\ref{prop:Bun}.
\end{proof}

\end{example}

\begin{example}[Billiards with cusps]
These are billiard domains $Q\subset\R^2$ where $\partial Q$ is a simple closed curve consisting of finitely many convex inwards $C^3$ curves with nonvanishing curvature such that the interior angles at corner points are zero.

By~\cite[Theorem~1.1]{ChernovMarkarian07}, the billiard map $f:M\to M$ falls within the Chernov-Markarian-Zhang framework
with $\mu_X(h>n)=O(n^{-2})$.
By~\cite[Theorem~1.1]{ChernovZhang08}, 
$\mu_Y(\varphi>n)=O(n^{-2})$ and hence 
$\rho_{v,w}(n)=O(n^{-1})$ for dynamically H\"older observables.

\begin{prop} \label{prop:cusp}
The conclusions of Proposition~\ref{prop:Bun} hold
for billiards with cusps.
\end{prop}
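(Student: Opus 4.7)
The plan is to follow the same template used in the proofs of Propositions~\ref{prop:Bun} and~\ref{prop:semi}. The required ingredient is a nonstandard central limit theorem for the return time $h:X\to\Z^+$ with normalization $(n\log n)^{1/2}$. For planar dispersing billiards with cusps, such a CLT was established by B\'alint, Chernov and Dolgopyat~\cite{BalintChernovDolgopyat11}: their result gives that $(n\log n)^{-1/2}\big(\sum_{j=0}^{n-1}h\circ f_X^j-n\int_X h\,d\mu_X\big)$ converges in distribution to a nondegenerate centered normal law. The free flight time (equivalently $h$) is dynamically H\"older on $X$, so it falls within the class of observables treated there.

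Given this CLT, I would apply Corollary~\ref{cor:tails} (with $\beta=2$) to conclude that $\mu_Y(\varphi>n)\sim \bar\sigma c_0 n^{-2}$ for an explicit constant $c_0>0$, and hence $\bvarphi^{-1}\sum_{j>n}\mu_Y(\varphi>j)\sim cn^{-2}$ for some $c>0$. This settles the first statement of Proposition~\ref{prop:Bun} in the cusp setting.

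Next, since $f:M\to M$ lies in the Chernov-Markarian-Zhang framework of~\cite{ChernovMarkarian07} with exponential tails for $\sigma$, Proposition~\ref{prop:int} yields $\gamma_n=O(n^{-2}\log n)$. Plugging the tail asymptotic into Theorem~\ref{thm:main2}(a) gives
\[
\rho_{v,w}(n)\sim cn^{-1}\int_M v\,d\mu\int_M w\,d\mu
\]
for $v,w\in\cH(X)$ with nonzero mean (the error term $\gamma_{[n/3]}+\zeta_2(n)=O(n^{-2}\log n)$ is dominated by the main term). Theorem~\ref{thm:main2}(b) combined with $\gamma_n=O(n^{-2}\log n)$ then yields $\rho_{v,w}(n)=O(n^{-2}\log n)$ whenever $v\in\cH(X)$ has zero mean and $w\in\cH(X)$.

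The only potentially delicate point is ensuring that the nonstandard CLT in~\cite{BalintChernovDolgopyat11} applies to $h$ as an observable on $X$ (rather than, say, directly on $M$); this however is exactly how that CLT is formulated in their setting, since the truncation scheme used there coincides with the first return decomposition underlying the Chernov-Markarian-Zhang structure. No further obstacle is expected, and the argument is otherwise a verbatim repetition of the proof of Proposition~\ref{prop:Bun}.
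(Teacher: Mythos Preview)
Your approach is essentially identical to the paper's: cite the nonstandard CLT from~\cite{BalintChernovDolgopyat11} for $h$ on $X$, apply Corollary~\ref{cor:tails} to get $\mu_Y(\varphi>n)\sim cn^{-2}$, then invoke Theorem~\ref{thm:main2}(a),(b) together with Proposition~\ref{prop:int}. One slip to correct: you write $\bvarphi^{-1}\sum_{j>n}\mu_Y(\varphi>j)\sim cn^{-2}$, but summing a sequence asymptotic to $n^{-2}$ gives $\sim cn^{-1}$, which is indeed what you use (correctly) in the next paragraph for $\rho_{v,w}(n)$.
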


\begin{proof}
By~\cite[Theorem~4 and eq.~(2.5)]{BalintChernovDolgopyat11}, $(n\log n)^{-1/2}(\sum_{j=0}^{n-1}h\circ f_X^j-n\int_X h\,d\mu_X)$ converges to a nondegenerate normal distribution.
Now proceed as in the proof of Proposition~\ref{prop:Bun}.
\end{proof}

\end{example}

\begin{example}[Billiards with cusps at flat points~\cite{Zhang17b}]
These are billiard domains $Q\subset\R^2$ where $\partial Q$ is a simple closed curve consisting of finitely many convex inwards $C^3$ curves such that the interior angles at one of the corner points is zero. Moreover the curves have nonvanishing curvature except at this corner point where $\partial Q$ has the form $\pm x^b$ for some $b>2$.

By~\cite{Zhang17b}, 
the billiard map $f:M\to M$ 
falls within the Chernov-Markarian-Zhang framework
with $\mu_X(h>n)=O(n^{-\beta})$ where $\beta=b/(b-1)\in(1,2)$.
Moreover, by~\cite{Zhang17b}, 
$\mu_Y(\varphi>n)=O(n^{-\beta})$ and hence 
$\rho_{v,w}(n)=O(n^{-(\beta-1)})$ for dynamically H\"older observables.

\begin{prop} \label{prop:JZ}
For billiards with cusps at flat points, there exists $c>0$ such that $\mu_Y(\varphi>n)\sim cn^{-\beta}$
and 
$\rho_{v,w}(n)\sim \bvarphi^{-1}(\beta-1)^{-1}c n^{-(\beta-1)}\int_M v\,d\mu\int_M w\,d\mu$ for all $v,\,w\in\cH(X)$ with nonzero mean.

In addition $\rho_{v,w}(n)=O(n^{-\beta}\log n)$ for all $v\in \cH(X)$ with $\int v\,d\mu=0$ and all $w\in\cH(X)$.
\end{prop}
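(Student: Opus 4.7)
The plan is to follow the template of Propositions~\ref{prop:Bun}, \ref{prop:semi} and~\ref{prop:cusp}, with the single substantive change that $\beta\in(1,2)$, so that a $\beta$-stable limit law with norming $b_n=n^{1/\beta}$ replaces the $\sqrt{n\log n}$-normalized CLT used in the previous three cases.

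The first step is to produce the appropriate stable limit law for the first return time: namely, that
\[
n^{-1/\beta}\Big(\sum_{j=0}^{n-1}h\circ f_X^j - n{\textstyle\int_X} h\,d\mu_X\Big)\to_d G
\]
for some nondegenerate $\beta$-stable random variable $G$. For cusps at flat points this is expected from the geometry of orbits entering the cusp (which gives the precise tail asymptotic $\mu_X(h>n)\sim c'n^{-\beta}$ in~\cite{Zhang17b}) combined with the Aaronson--Denker type stable CLT for Gibbs-Markov induced systems; these are exactly the same ingredients used for the $\beta=2$ cusp case in~\cite{BalintChernovDolgopyat11}. Given this limit law, Corollary~\ref{cor:tails} yields the first assertion: $\mu_Y(\varphi>n)\sim \bar\sigma c_0 n^{-\beta}=:cn^{-\beta}$.

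Next, for $v,w\in\cH(X)$ with nonzero mean, I would apply Theorem~\ref{thm:main2}(a). An elementary summation (integral test) converts $\mu_Y(\varphi>n)\sim cn^{-\beta}$ into
\[
\bvarphi^{-1}\sum_{j>n}\mu_Y(\varphi>j)\sim \bvarphi^{-1}(\beta-1)^{-1}c\,n^{-(\beta-1)}.
\]
Since the tower $\DE=Y^\sigma$ has exponential tails, Proposition~\ref{prop:int} gives $\gamma_{[n/3]}=O(n^{-\beta}\log n)$, while for $\beta\in(1,2)$ we have $\zeta_\beta(n)=n^{-2(\beta-1)}$. Both $n^{-\beta}\log n$ and $n^{-2(\beta-1)}$ are $o(n^{-(\beta-1)})$ (using $\beta>\beta-1$ and $2(\beta-1)>\beta-1$), so the tail contribution from Theorem~\ref{thm:main2}(a) dominates, delivering the claimed correlation asymptotic.

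Finally, the third statement is immediate from Theorem~\ref{thm:main2}(b) together with $\gamma_{[n/3]}=O(n^{-\beta}\log n)$ from Proposition~\ref{prop:int}.

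The principal obstacle is the first step. In contrast to Propositions~\ref{prop:Bun}--\ref{prop:cusp}, where the $\sqrt{n\log n}$ CLT is explicitly supplied in the literature, here one must ensure the geometric analysis of~\cite{Zhang17b} has been refined to a sharp tail asymptotic $\mu_X(h>n)\sim c'n^{-\beta}$ (not merely $O(n^{-\beta})$), from which the $\beta$-stable limit follows by standard induction arguments on the Young tower $\DE$. If only the $O$-bound is available, one may have to fall back on Proposition~\ref{prop:rough} and lose a factor of $n^\varepsilon$ in both the tail and the asymptotic, weakening $\sim$ to $\approx$.
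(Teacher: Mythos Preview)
Your proposal is correct and follows essentially the same approach as the paper. The one point you flag as a potential obstacle --- the availability of the $\beta$-stable limit law for $h$ under $f_X$ --- is resolved directly by citing \cite[Theorem~3.1]{JungZhangapp}, so no fallback to Proposition~\ref{prop:rough} is needed.
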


\begin{proof}
By~\cite[Theorem~3.1]{JungZhangapp}, $n^{-1/\beta}(\sum_{j=0}^{n-1}h\circ f_X^j-n\int_X h\,d\mu_X)$ converges to a nondegenerate $\beta$-stable law.
Hence the first statement follows from
Corollary~\ref{cor:tails} and the second statement from
Theorem~\ref{thm:main2}(a).

Finally, $\gamma_n=O(n^{-\beta}\log n)$ by Proposition~\ref{prop:int}, so the final statement follows from
Theorem~\ref{thm:main2}(b).
\end{proof}

\end{example}

\paragraph{Explicit formulas for asymptotic constants}
In all the billiard examples considered above, the constant $c$ 
can be made completely explicit.  
For brevity, we restrict to the case of 
the stadium in Example~\ref{ex:stadia}.
Let $\ell$ be the length of the parallel line segments in $\partial Q$.
The argument in~\cite[page 504, line 11]{BalintGouezel06} shows that
\[
(c_0n\log n)^{-1/2}\Big(\sum_{j=0}^{n-1}h\circ f_X^j-n\int_X h\,d\mu_X\Big)\to_d N(0,1)
\quad\text{where $c_0=\frac{4+3\log 3}{4-3\log 3}\,\frac{\ell^2}{8}$}.
\]
Since $\varphi=h_\sigma$, it follows from Lemma~\ref{lem:CLT}(a) that
\[
B_n^{-1}\Big(\sum_{j=0}^{n-1}\varphi\circ F^j-n\int_Y \varphi\,d\mu_Y\Big)\to_d  N(0,1)
\]
where $B_n=(c_1n\log n)^{1/2}$ and $c_1=\bar\sigma c_0$.
Define $L(x)=2c_1\log x$.
Then $nL(B_n)\sim B_n^2$ and $L(x)=2\int_1^x c_1 u^{-1}du$.
Applying~\cite[Theorem~1.5]{Gouezel10b}, we obtain $\mu_Y(\varphi>n)\sim c_1 n^{-2}$.

Next, $\bar\varphi=\bar\sigma\int_X h\,d\mu_X=\bar\sigma/\mu(X)$
and $\mu(X)=2/(\pi+\ell)$ by~\cite[Eq.~(6)]{BalintGouezel06}.
Hence 
\[
\bvarphi^{-1}\mu_Y(\varphi>n)\sim \frac{2c_0}{\pi+\ell}\,n^{-2}=
\frac{4+3\log 3}{4-3\log 3}\,\frac{\ell^2}{4(\pi+\ell)}\,n^{-2}.
\]
 By Theorem~\ref{thm:main2}(a),
\[
\rho_{v,w}(n)\sim 
 \frac{4+3\log 3}{4-3\log 3}\,\frac{\ell^2}{4(\pi+\ell)}\,n^{-1}  \int_M v\,d\mu\int_M w\,d\mu
\]
for $v,w\in\cH(X)$ with nonzero mean.

\begin{example}[Bunimovich flowers~\cite{Bunimovich73}]
These are billiard domains $Q\subset\R^2$ where $\partial Q$ is a simple closed piecewise $C^3$ curve consisting of at least one arc with nonvanishing curvature that is convex inwards and at least one convex outwards circular arc that is strictly smaller than a semicircle.  All corner points have nonzero angle, and each convex outwards arc continues to a circle contained in $Q$.  (The conditions can be further relaxed to allow line segments in $\partial Q$, see~\cite{ChernovZhang05}.)

By~\cite[Theorem~2]{ChernovZhang05}, the billiard map
$f:M\to M$ falls within the Chernov-Markarian-Zhang framework
with $\mu_X(h>n)=O(n^{-3})$.   Hence $\mu_Y(\varphi>n)=O((\log n)^3n^{-3})$ leading as in~\cite{Young99} to the upper bound on decay of correlations
$\rho_{v,w}(n)=O((\log n)^3n^{-2})$ for dynamically H\"older observables.

It is also easily verified that $\mu_X(h>n)\approx n^{-3}$.  Only the more delicate upper bound is explicit in~\cite{ChernovZhang05}, but  the lower bound is much simpler.  It suffices to estimate the contribution from sliding along a single convex outwards circular arc $S\subset\partial Q$.
Let $(r,\phi)$ denote coordinates on $S\times[-\pi/2,\pi/2]$ where $r\in[0,r_0]$ is arclength along $S$.
Then $\mu_X$ is given by $d\mu_X=\cos\phi\,d\phi\,dr$.
The set $X$ is chosen to exclude points that make at least two successive collisions with $S$.  Hence $\{h>n\}$ includes all points $(r,\phi)$ with $r$ close to the beginning of $S$ and $\phi$ close to $\pi/2$.
Since $S$ is circular, the angles at successive collisions remain close to this initial value of $\phi$ so it is clear that $\{h>n\}$ contains a set of the form
\[
E_n=\textstyle \{(r,\phi): 0\le r\le \frac{a}{n},\; \frac{\pi}{2}-\frac{b}{n} \le\phi\le\frac{\pi}{2}\},
\]
where $a$ and $b$ are constants independent of $n$.
Hence 
\[
\textstyle \mu_X(h>n)\ge \int_0^{r_0}\int_{-\pi/2}^{\pi/2}1_{E_n}\cos\phi\,d\phi\,dr
\sim\frac12 ab^2 n^{-3}.
\]

By Proposition~\ref{prop:rough}, it follows that $\mu_Y(\varphi>n)\gg (\log n)^{-1}n^{-3}$.  By Theorem~\ref{thm:main2}, $\rho_{v,w}(n)\gg (\log n)^{-1}n^{-2}\int_M v\,d\mu\int_M w\,d\mu$ for all $v,\,w\in \cH(X)$, $n\ge1$.

\end{example}

\begin{example}[Dispersing billiards with vanishing curvature~\cite{ChernovZhang05b}]
These are planar periodic dispersing billiards $Q=\T^2\setminus\bigcup S_k$ where there are finitely many disjoint strictly convex scatterers $S_k$ with $C^3$ boundaries of nonvanishing curvature, except that the curvature vanishes at two points.  Moreover, there is a periodic orbit that runs between these two points and the boundary nearby has the form $\pm(1+|x|^b)$ for some $b>2$.  
By~\cite[Theorem~1]{ChernovZhang05b}, the billiard map $f:M\to M$ falls within the Chernov-Markarian-Zhang framework
with $\mu_X(h>n)=O(n^{-\beta})$ where $\beta-1=(b+2)/(b-2)\in (1,\infty)$. Hence $\mu_Y(\varphi>n)=O((\log n)^\beta n^{-\beta})$ leading as in~\cite{Young99} to the upper bound on decay of correlations
$\rho_{v,w}(n)=O((\log n)^\beta n^{-(\beta-1)})$ for dynamically H\"older observables.

Moreover $\mu_X(h>n)\gg cn^{-\beta}$ by~\cite[Proposition~2]{ChernovZhang05b}.
By Proposition~\ref{prop:rough}, it follows that $\mu_Y(\varphi>n)\gg (\log n)^{-1}n^{-\beta}$.  By Theorem~\ref{thm:main2}, $\rho_{v,w}(n)\gg (\log n)^{-1}n^{(\beta-1)}\int_M v\,d\mu\int_M w\,d\mu$ for all $v,\,w\in \cH(X)$, $n\ge1$.
\end{example}

\appendix

\section{Formula for the correlation function}
\label{app:conv}

In this appendix, we prove Proposition~\ref{prop:conv}.
One method would be to check equality of coefficients directly, but we choose to
convert all sequences into Fourier series.
Recall that $V(n)(y)=1_{\{\Phi\ge n\}}\tv(y,\Phi(y)-n)$,
$W(n)(y)= 1_{\{\Phi>n\}}\tw(y,n)$, with Fourier series
\[
\hV(z)(y)=\sum_{\ell=0}^{\Phi(y)-1} z^{\Phi(y)-\ell}\tv(y,\ell), \qquad
\hW(z)(y)=\sum_{\ell=0}^{\Phi(y)-1} z^\ell\tw(y,\ell).
\]
Define $\Phi_k=\sum_{j=0}^{k-1}\Phi\circ F^j$.
Arguing as in~\cite[Section~6.2]{M18} (with discrete time instead of continuous time), 
write
\begin{align*}
\rho^*_{\tv,\tw}(n) & 
 =\int_\Delta 1_{\{n+\ell<\Phi(y)\}}\tv(y,\ell)\,\tw\circ f_\Delta^n(y,\ell)\,d\mu_\Delta
\\ & \qquad \qquad +\sum_{k=1}^\infty\int_\Delta 1_{\{\Phi_k(y)\le n+\ell<\Phi_{k+1}(y)\}}\tv(y,\ell)\,\tw\circ f_\Delta^n(y,\ell)\,d\mu_\Delta
= \sum_{k=0}^\infty J_k(n),
\end{align*}
where $J_0(n)$ was defined in Section~\ref{sec:X} and
\[
J_k(n)=
\int_\Delta 1_{\{\Phi_k(y)\le \ell+n<\Phi_{k+1}(y)\}}\tv(y,\ell)\tw(F^k y,\ell+n-\Phi_k(y))\,d\mu_\Delta,\;  k\ge1.
\]
For $k\ge1$,
\begin{align*}
\hJ_k(z) & =\sum_{n=0}^\infty z^nJ_k(n)
 =\bPhi^{-1}\int_Y\sum_{\ell=0}^{\Phi(y)-1}\sum_{n=\Phi_k(y)-\ell}^{\Phi_{k+1}(y)-\ell-1}z^n\tv(y,\ell)\tw(F^k y,\ell+n-\Phi_k(y))\,d\mu_Y.
\end{align*}
Making the substitution $\ell'=\ell+n-\Phi_k(y)$,
\begin{align*}
\hJ_k(z) & =\bPhi^{-1}\int_Y\Bigl(\sum_{\ell=0}^{\Phi(y)-1}z^{-\ell}\tv(y,\ell)\Bigr)
\Bigl(\sum_{\ell'=0}^{\Phi(F^k y)-1}z^{\ell'}\tw(F^k y,\ell')\Bigr)z^{\Phi_k(y)}\,d\mu_Y
\\ & = \bPhi^{-1}\int_Y z^{\Phi_k}v_z \,\hW(z)\circ F^k\,d\mu_Y,
\end{align*}
where $v_z(y)=\sum_{\ell=0}^{\Phi(y)-1}z^{-\ell}\tv(y,\ell)$.
Note that $z^{\Phi}v_z=\hV(z)$.  Hence
\begin{align*}
R^k(z^{\Phi_k}v_z)=R^{k-1}R(z^{\Phi_{k-1}\circ F}z^{\Phi}v_z)
=R^{k-1}(z^{\Phi_{k-1}}R(z^{\Phi}v_z))
=\hR(z)^{k-1}R\hV(z),
\end{align*}
and so
\begin{align*}
\hJ_k(z)
& = \bPhi^{-1}\int_Y R^k(z^{\Phi_k}v_z) \cdot \hW(z)\,d\mu_Y
= \bPhi^{-1}\int_Y \hR(z)^{k-1}R\hV(z) \cdot \hW(z)\,d\mu_Y.
\end{align*}
Hence,
\begin{align*}
\textstyle \hat\rho^*_{\tv,\tw}(z)
& = \sum_{k=0}^\infty \hJ_k(z) =\hJ_0(z)+\bPhi^{-1}\sum_{k=1}^\infty \int_Y \hR(z)^{k-1}R\hV(z)\cdot  \hW(z)\,d\mu_Y
\\ & =\hJ_0(z)+\bPhi^{-1}\int_Y \hT(z)R\hV(z)\cdot  \hW(z)\,d\mu_Y.
\end{align*}

This completes the proof of Proposition~\ref{prop:conv}.

\section{Upper bounds on Young towers}
\label{app:upper}

In this appendix, we recall a standard result giving an upper bound on decay of correlations for two-sided Young towers.  We could not find the result stated in the form we require in the literature; hence we provide the details.

Let $f_\Delta:\Delta\to\Delta$ be a mixing two-sided Young tower with
$\mu_\Delta(\Phi>n)=O(n^{-\beta'})$ where $\beta'>1$.
As in Section~\ref{sec:prel2}, $\Delta=Y^\Phi$ is a tower over a two-sided Gibbs-Markov map defined on a bounded metric space $(Y,d)$.
Fix $\theta\in(0,1)$ and define $\cF_\theta(\Delta)$ to be the space of observables $\tv:\Delta\to\R$ with $\|\tv\|_\theta<\infty$ where $\|\tv\|_\theta$ is defined as in~\eqref{eq:norm}. 
Note that if $v:M\to\R$ lies in $\cH(M)$, then $\tv=v\circ\pi_M\in\cF_\theta(\Delta)$
and $\|\tv\|_\theta={\|v\|}_\cH$.
However, here we consider observables on $\Delta$ that need not be lifts of observables on~$M$ and the underlying metric space $(M,d)$ plays no role.

\begin{thm} \label{thm:upper}
There exists a constant $C>0$ such that
\[
\Big| \int_\Delta \tv\,\tw\circ f_\Delta^n\,d\mu_\Delta
-\int_\Delta \tv\,d\mu_\Delta
\int_\Delta \tw\,d\mu_\Delta\Big|
\le C\|\tv\|_\theta \|\tw\|_\theta \, n^{-(\beta'-1)}
\]
for all $\tv,\tw\in\cF_\theta(\Delta)$, $n\ge1$.
\end{thm}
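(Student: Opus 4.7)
The plan is to reduce the two-sided correlation to a one-sided correlation on $\bDelta$ via the approximation machinery of Section~\ref{sec:approx}, and then cite Young's polynomial decay estimate on one-sided towers~\cite{Young99} (as cited in the paper just before Theorem~\ref{thm:main}). First I reduce to $\int_\Delta\tv\,d\mu_\Delta = \int_\Delta\tw\,d\mu_\Delta = 0$: writing $\tv_0 = \tv - \int\tv$ and $\tw_0 = \tw - \int\tw$, bilinearity gives $\int\tv\,\tw\circ f_\Delta^n\,d\mu_\Delta - \int\tv\int\tw = \int\tv_0\,\tw_0\circ f_\Delta^n\,d\mu_\Delta$ with $\|\tv_0\|_\theta\le 2\|\tv\|_\theta$ and similarly for $\tw_0$.

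Next, fix $k = [n/2]$ and use measure-preservation to rewrite $\rho^*_{\tv,\tw}(n) = \int_\Delta\tv\circ f_\Delta^k\cdot\tw\circ f_\Delta^{n+k}\,d\mu_\Delta$. Substituting $\tw\circ f_\Delta^{n+k} = (\tw\circ f_\Delta^k)\circ f_\Delta^n$ and applying the approximations $\tv_k,\tw_k$ of Section~\ref{sec:approx} to $\tv\circ f_\Delta^k$ and $\tw\circ f_\Delta^k$ respectively, I obtain
\[
\rho^*_{\tv,\tw}(n) = \int_\Delta\tv_k\,\tw_k\circ f_\Delta^n\,d\mu_\Delta + E_1 + E_2,
\]
where $E_1 = \int_\Delta(\tv\circ f_\Delta^k - \tv_k)\,\tw\circ f_\Delta^{n+k}\,d\mu_\Delta$ and $E_2 = \int_\Delta\tv_k\,(\tw\circ f_\Delta^k - \tw_k)\circ f_\Delta^n\,d\mu_\Delta$. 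Proposition~\ref{prop:tv}(c) gives $|\tv\circ f_\Delta^k - \tv_k|_1 \le |\tv|_\theta\int_\Delta\theta^{\psi_k}\,d\mu_\Delta$, and rerunning the decomposition $L_\theta^k = C(k) + D_\theta(k)$ from the proof of Lemma~\ref{lem:Ltheta} without the $1_\bZ$ cut-off gives $\int_\Delta\theta^{\psi_k}\,d\mu_\Delta \ll \sum_{j>k}\mu_Y(\Phi>j) + k^{-\beta'} \ll k^{-(\beta'-1)}$. Hence $|E_1| \ll |\tv|_\theta|\tw|_\infty k^{-(\beta'-1)}$ and $|E_2| \ll |\tv|_\infty|\tw|_\theta k^{-(\beta'-1)}$, both of order $\|\tv\|_\theta\|\tw\|_\theta n^{-(\beta'-1)}$.

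By Proposition~\ref{prop:tv}(b), $\tv_k = \bar v_k\circ\bar\pi$ and $\tw_k = \bar w_k\circ\bar\pi$, so the semiconjugacy $\bar\pi\circ f_\Delta = \bar f_\Delta\circ\bar\pi$ together with $\bar\pi_*\mu_\Delta = \bar\mu_\Delta$ and the duality between $L$ and $\bar f_\Delta$ give
\[
\int_\Delta\tv_k\,\tw_k\circ f_\Delta^n\,d\mu_\Delta = \int_\bDelta\bar v_k\,\bar w_k\circ\bar f_\Delta^n\,d\bar\mu_\Delta = \int_\bDelta (L^k\bar v_k)\,\bar w_k\circ\bar f_\Delta^{n-k}\,d\bar\mu_\Delta.
\]
Proposition~\ref{prop:tv}(d) now guarantees $\|L^k\bar v_k\|_\theta \ll \|\tv\|_\theta$, promoting $L^k\bar v_k$ into a genuine $\cF_\theta(\bDelta)$ observable. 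Since $\bDelta = \bY^\Phi$ is a one-sided mixing Young tower with $\mu_\bY(\Phi>n) = O(n^{-\beta'})$, the one-sided polynomial decay estimate applied to the pair $(L^k\bar v_k,\bar w_k)$ with $n-k\ge[n/2]$ yields
\[
\Big|\int_\bDelta (L^k\bar v_k)\,\bar w_k\circ\bar f_\Delta^{n-k}\,d\bar\mu_\Delta - \int_\bDelta L^k\bar v_k\,d\bar\mu_\Delta\int_\bDelta\bar w_k\,d\bar\mu_\Delta\Big| \ll \|\tv\|_\theta|\tw|_\infty n^{-(\beta'-1)}.
\]
From mean-zero, $|\int_\bDelta\bar v_k\,d\bar\mu_\Delta| = |\int_\Delta(\tv_k-\tv\circ f_\Delta^k)\,d\mu_\Delta| \ll |\tv|_\theta k^{-(\beta'-1)}$ and similarly for $\bar w_k$, so the product of means is $O(|\tv|_\theta|\tw|_\theta k^{-2(\beta'-1)})$ and is absorbed. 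Collecting all terms yields the claimed bound.

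The main obstacle is that the projected observables $\bar v_k,\bar w_k$ lie only in $L^\infty(\bDelta)$ and are not H\"older in the separation-time metric, so the one-sided decay estimate cannot be applied to them directly. Smoothing by $L^k$ via Proposition~\ref{prop:tv}(d) is precisely what brings $\bar v_k$ into $\cF_\theta(\bDelta)$ while preserving the integral against $\bar w_k\circ\bar f_\Delta^{n-k}$, at the price of losing $k$ iterations---a price absorbed by the choice $k = [n/2]$.
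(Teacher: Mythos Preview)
Your proof is correct and follows essentially the same route as the paper: shift by $k$ iterates, replace $\tv\circ f_\Delta^k$ and $\tw\circ f_\Delta^k$ by their one-sided approximations $\tv_k,\tw_k$, control the approximation errors via $\int\theta^{\psi_k}\,d\mu_\Delta\ll k^{-(\beta'-1)}$ (which the paper packages as Proposition~\ref{prop:prel}(a)), pass to $\bDelta$, smooth $\bar v_k$ by $L^k$ using Proposition~\ref{prop:tv}(d) (the paper's Proposition~\ref{prop:prel}(b)), and apply Young's one-sided bound. The only cosmetic differences are that the paper splits $2n$ at $n$ rather than $n$ at $[n/2]$, and centres only $\tw$ rather than both observables; also, the fact that $\tv_k$ projects to $\bar v_k$ is the opening statement of Proposition~\ref{prop:tv}, not part~(b), and the full norm bound $\|L^k\bar v_k\|_\theta\ll\|\tv\|_\theta$ needs part~(a) for the sup-norm as well as part~(d).
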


First, we mention some prerequisites.
Define $\cF_\theta(\bDelta)$ to consist of observables
$\bar v:\bDelta\to\R$ with $\|\bar v\|_\theta<\infty$ where
$\|\bar v\|_\theta$ is defined as in~\eqref{eq:norm1}.
Note that if $\bar v\in \cF_\theta(\bDelta)$, then
$\tv=\bar v\circ\bar\pi\in\cF_\theta(\Delta)$ and
$\|\tv\|_\theta =\|\bar v\|_\theta$.

For $\tv\in\cF_\theta(\Delta)$, define $\tv_n:\Delta\to\R$ and $\bar v_n:\bDelta\to\R$ as in Section~\ref{sec:approx}.

\begin{prop} \label{prop:prel}
There exists $C>0$ such that 
for all $\tv\in\cF_\theta(\theta)$, $n\ge1$,
\begin{itemize}
\item[(a)] $|\tv\circ f_\Delta^n-\tv_n|_1\le C|\tv|_\theta\,n^{-(\beta'-1)}$,
\item[(b)] $\|L^n\bar v_n\|_\theta \le C\|\tv\|_\theta$.
\end{itemize}
\end{prop}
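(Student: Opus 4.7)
The plan is to establish (a) and (b) separately; (a) requires a tail estimate in the spirit of Lemma~\ref{lem:Ltheta}, while (b) is a direct consequence of Proposition~\ref{prop:tv} combined with the fact that $L$ is a Markov operator.

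For (a), Proposition~\ref{prop:tv}(c) immediately reduces matters to the bound
\[
|\tv\circ f_\Delta^n-\tv_n|_1 \le C|\tv|_\theta \int_\bDelta \theta^{\psi_n}\,d\bar\mu_\Delta,
\]
so it suffices to show $\int_\bDelta\theta^{\psi_n}\,d\bar\mu_\Delta\ll n^{-(\beta'-1)}$. Following the strategy of Lemma~\ref{lem:Ltheta}, I would rewrite this integral as $\int_\bDelta L_\theta^n 1\,d\bar\mu_\Delta$ and use the decomposition $L_\theta^n=C(n)+D_\theta(n)$ introduced there. The estimates on $A(n)$, $T_\theta(n)$ and $B_\theta(n)1$ from that proof (none of which involved any multiplier such as $1_\bY$) combine to give $|D_\theta(n)1|_1\ll n^{-\beta'}$ by the same convolution argument. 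The new term $C(n)1=1_{\{\ell>n\}}$ is handled by a Fubini-type computation:
\[
|C(n)1|_1=\bar\mu_\Delta(\ell>n)=\bPhi^{-1}\int_\bY (\Phi-n-1)_+\,d\bar\mu_Y=\bPhi^{-1}\sum_{i>n+1}\mu_Y(\Phi>i)\ll n^{-(\beta'-1)}.
\]
Summing the two contributions proves part~(a).

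For (b), the $\theta$-seminorm bound $|L^n\bar v_n|_\theta\le C\|\tv\|_\theta$ is immediate from Proposition~\ref{prop:tv}(d). For the $L^\infty$ part, I would use that $\bar\mu_\Delta$ is $\bar f_\Delta$-invariant, so $L$ is a positivity-preserving Markov operator with $L1=1$ and hence a contraction on $L^\infty(\bDelta)$. Combined with Proposition~\ref{prop:tv}(a), this gives $|L^n\bar v_n|_\infty\le|\bar v_n|_\infty\le|\tv|_\infty\le\|\tv\|_\theta$, and adding the two estimates yields $\|L^n\bar v_n\|_\theta\le C\|\tv\|_\theta$.

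The only mildly delicate point in the whole argument is the bookkeeping for $|C(n)1|_1$: dropping the multiplier $1_\bY$ that annihilated $C(n)1$ inside Lemma~\ref{lem:Ltheta} costs exactly one summation in $n$, which is precisely the gap between $n^{-\beta'}$ and the target rate $n^{-(\beta'-1)}$. Everything else is a direct application of results already in place, so no new ideas beyond Lemma~\ref{lem:Ltheta} and Proposition~\ref{prop:tv} should be needed.
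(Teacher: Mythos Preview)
Your proposal is correct and follows essentially the same route as the paper: both parts reduce to Proposition~\ref{prop:tv} and the decomposition $L_\theta^n=C(n)+D_\theta(n)$ from Lemma~\ref{lem:Ltheta}, with the only new ingredient being the estimate $|C(n)1|_1\ll n^{-(\beta'-1)}$. The paper bounds this term slightly more crudely by $\bPhi^{-1}\int_\bY 1_{\{\Phi>n\}}\Phi\,d\bar\mu_Y$ rather than your exact expression $\bPhi^{-1}\int_\bY(\Phi-n-1)_+\,d\bar\mu_Y$, but the two are obviously equivalent at this level; likewise your Markov-operator argument for $|L^n\bar v_n|_\infty$ is the same content as the paper's reference to the calculation in Proposition~\ref{prop:tv}(d).
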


\begin{proof}
(a) Arguing as in the proof of Proposition~\ref{prop:tv},
\(
|\tv\circ f_\Delta^n-\tv_n|
\ll |\tv|_\theta\, \theta^{\psi_n}.
\)
Proceeding as in the proof of Lemma~\ref{lem:Ltheta},
we can write
\[
\int_\bDelta \theta^{\psi_n}\,d\bar\mu_\Delta=
\int_\bDelta L^n\theta^{\psi_n}\,d\bar\mu_\Delta=
\int_\bDelta L_\theta^n1\,d\bar\mu_\Delta
\]
and $L_\theta^n=C(n)+D(n)$ where $|D(n)1|_1\ll n^{-\beta'}$.
Also, $|C(n)1|_1\le\bPhi^{-1}\int_\bY 1_{\{\Phi>n\}}\Phi\,d\bar\mu_Y
\ll n^{-(\beta'-1)}$.
Hence
\[
|\tv\circ f_\Delta^n-\tv_n|_1 \ll |\tv|_\theta
|\theta^{\psi_n}|_1 \ll |\tv|_\theta\,n^{-(\beta'-1)}.
\]

(b) The same calculations as in Proposition~\ref{prop:tv}(d), show that
$|L^n\bar v_n|_\infty\le |\tv|_\infty$ and
$|(L^n\bar v_n)(\bar p_1)-(L^n\bar v_n)(\bar p_2)|
\ll \|\tv\|_\theta\, \theta^{s(\bar p_1,\bar p_2)}$
for $\bar p_1,\bar p_2\in\bDelta$, $n\ge1$.
\end{proof}

\begin{pfof}{Theorem~\ref{thm:upper}}
First, recall that there is a constant $C>0$ such that
\begin{equation} \label{eq:oneside}
\Big| \int_\bDelta \bar v\,\bar w\circ \bar f_\Delta^n\,d\bar\mu_\Delta
-\int_\bDelta \bar v\,d\bar\mu_\Delta
\int_\bDelta \bar w\,d\bar\mu_\Delta\Big|
\le C\|\bar v\|_\theta |\bar w|_\infty \, n^{-(\beta'-1)}
\end{equation}
for all $\bar v\in\cF_\theta(\bDelta)$, $\bar w\in L^\infty(\bDelta)$, $n\ge1$.
(This follows from~\cite[Theorem~3]{Young99}, the specific dependence on
$\|\bar v\|_\theta$, $|\bar w|_\infty$ being a standard consequence 
of the uniform boundedness principle.  Alternatively, see~\cite[Section~2.2]{KKM19}.)

Suppose without loss of generality that $\int_\Delta \tw\,d\mu_\Delta=0$.
Write
\[
\int_\Delta \tv\,\tw\circ f_\Delta^{2n}\,d\mu_\Delta
 = \int_\Delta \tv\circ f^n\,\tw\circ f_\Delta^{3n}\,d\mu_\Delta
=I_1(n)+I_2(n)+I_3(n)
\]
where
\begin{align*}
I_1(n) & =  \int_\Delta (\tv\circ f_\Delta^n-\tv_n)\,\tw\circ f_\Delta^{3n}\,d\mu_\Delta, \quad
I_2(n)  =  \int_\Delta \tv_n\,(\tw\circ f_\Delta^n-\tw_n)\circ f_\Delta^{2n}\,d\mu_\Delta
\\  
I_3(n) & = \int_\Delta \tv_n\,\tw_n\circ f_\Delta^{2n} \,d\mu_\Delta
= \int_\bDelta \bar v_n\,\bar w_n\circ \bar f_\Delta^{2n} \,d\bar\mu_\Delta
= \int_\bDelta L^n\bar v_n\,\bar w_n\circ \bar f_\Delta^n \,d\bar\mu_\Delta.
\end{align*}
By Proposition~\ref{prop:prel}(a),
\[
|I_1(n)| \ll |\tv|_\theta |\tw|_\infty\,n^{-(\beta'-1)}, \qquad
|I_2(n)| \ll |\tv|_\infty |\tw|_\theta \,n^{-(\beta'-1)}.
\]
By~\eqref{eq:oneside},
\[
|I_3(n)| \ll \Big|\int_\bDelta L^n\bar v_n\,d\bar\mu_\Delta\Big|\,
\Big|\int_\bDelta \bar w_n\,d\bar\mu_\Delta\Big|+\|L^n\bar v_n\|_\theta|\bar w_n|_\infty\,n^{-(\beta'-1)}.
\]
Since $\tw$ has mean zero, it follows from Proposition~\ref{prop:prel}(a) that
$|\int_\bDelta \bar w_n\,d\bar\mu_\Delta|=
|\int_\Delta (\tw_n-\tw\circ f_\Delta^n)\,d\mu_\Delta|\ll |\tw|_\theta\,n^{-(\beta'-1)}$
Hence using Proposition~\ref{prop:prel}(b),
\[
|I_3(n)|\ll \|\tv\|_\theta\|\tw\|_\theta\,n^{-(\beta'-1)},
\]
and the result follows.
\end{pfof}

 \paragraph{Acknowledgements}
HB gratefully acknowledges the support of FWF grant P31950-N45 during
the last stages of writing this paper.
 The research of IM was supported in part by the European Advanced Grant {\em StochExtHomog} (ERC AdG 320977).
We are grateful to the referee for reading the paper carefully and making several very helpful suggestions, to Alexey Korepanov for helpful discussions and to Peyman Eslami for pointing out a problem in the previous version.


\begin{thebibliography}{10}

\bibitem{AFLV11}
J.~F. Alves, J.~M. Freitas, S.~Luzzatto and S.~Vaienti. From rates of mixing
  to recurrence times via large deviations. \emph{Adv. Math.} \textbf{228}
  (2011) 1203--1236.

\bibitem{AlvesLuzzattoPinheiro05}
J.~F. Alves, S.~Luzzatto and V.~Pinheiro. Markov structures and decay of
  correlations for non-uniformly expanding dynamical systems. \emph{Ann. Inst.
  H. Poincar\'e Anal. Non Lin\'eaire} \textbf{22} (2005) 817--839.

\bibitem{AlvesPinheiro08}
J.~F Alves and V.~Pinheiro. Slow rates of mixing for dynamical systems with
  hyperbolic structures. \emph{J. Stat. Phys.} \textbf{131} (2008) 505--534.

\bibitem{AntoniouMapp}
M.~Antoniou and I.~Melbourne. Rate of convergence in the weak invariance
  principle for deterministic systems. \emph{Comm. Math. Phys.} \textbf{369} (2019) 1147--1165.

\bibitem{AraujoMsub}
V.~Ara{\'u}jo and I.~Melbourne. Mixing properties and statistical limit
  theorems for singular hyperbolic flows without a smooth stable foliation.
{\em Adv. Math.} \textbf{349} (2019) 212--245.

\bibitem{BalintChernovDolgopyat11}
P.~B{\'a}lint, N.~Chernov and D.~Dolgopyat. Limit theorems for dispersing
  billiards with cusps. \emph{Comm. Math. Phys.} \textbf{308} (2011) 479--510.

\bibitem{BalintGouezel06}
P.~B{\'a}lint and S.~Gou{\"e}zel. Limit theorems in the stadium billiard.
  \emph{Comm. Math. Phys.} \textbf{263} (2006) 461--512.

\bibitem{Bleher92}
P.~M. Bleher. Statistical properties of two-dimensional periodic {L}orentz gas
  with infinite horizon. \emph{J. Statist. Phys.} \textbf{66} (1992) 315--373.

\bibitem{BruinLuzzattoStrien03}
H.~Bruin, S.~Luzzatto and S.~van Strien. Decay of correlations in
  one-dimensional dynamics. \emph{Ann. Sci. \'Ecole Norm. Sup.} \textbf{36}
  (2003) 621--646.

\bibitem{BruinTerhesiu18}
H.~Bruin and D.~Terhesiu. Upper and lower bounds for the correlation function
  via inducing with general return times. \emph{Ergodic Theory Dynam. Systems}
  \textbf{38} (2018) 34--62.

\bibitem{Bunimovich73}
L.~A. Bunimovi{\v{c}}. The ergodic properties of billiards that are nearly
  scattering. \emph{Dokl. Akad. Nauk SSSR} \textbf{211} (1973) 1024--1026.

\bibitem{Bunimovich79}
L.~A. Bunimovich. On the ergodic properties of nowhere dispersing billiards.
  \emph{Comm. Math. Phys.} \textbf{65} (1979) 295--312.

\bibitem{Chernov99}
N.~Chernov. Decay of correlations and dispersing billiards. \emph{J. Statist.
  Phys.} \textbf{94} (1999) 513--556.

\bibitem{ChernovMarkarian}
N.~Chernov and R.~Markarian. \emph{Chaotic billiards}. Mathematical Surveys and
  Monographs \textbf{127}, American Mathematical Society, Providence, RI, 2006.

\bibitem{ChernovMarkarian07}
N.~Chernov and R.~Markarian. Dispersing billiards with cusps: slow decay of
  correlations. \emph{Comm. Math. Phys.} \textbf{270} (2007) 727--758.

\bibitem{ChernovZhang05b}
N.~Chernov and H.-K. Zhang. A family of chaotic billiards with variable mixing
  rates. \emph{Stoch. Dyn.} \textbf{5} (2005) 535--553.

\bibitem{ChernovZhang05}
N.~I. Chernov and H.-K. Zhang. {Billiards with polynomial mixing rates}.
  \emph{Nonlinearity} \textbf{18} (2005) 1527--1553.

\bibitem{ChernovZhang08}
N.~I. Chernov and H.-K. Zhang. {Improved estimates for correlations in
  billiards}. \emph{Comm. Math. Phys.} \textbf{77} (2008) 305--321.

\bibitem{CFKMZ}
I.~Chevyrev, P.~K. Friz, A.~Korepanov, I.~Melbourne and H.~Zhang. Multiscale
  systems, homogenization, and rough paths. 
Probability and Analysis in Interacting Physical Systems: In Honor of S.R.S. Varadhan, Berlin, August, 2016, eds.\
        P. Friz et al. \emph{Springer Proc. in Maths. \& Stat.} \textbf{283}
(2019) 17--42.

\bibitem{CunyDedeckerKorepanovMerlevedesub}
C.~Cuny, J.~Dedecker, A. Korepanov and F.~Merlev{\`e}de. {Rates in almost sure
  invariance principle for slowly mixing dynamical systems}. 
  \emph{Ergodic Theory Dynam. Systems} 2019. 
Published online.

\bibitem{CunyMerlevede15}
C.~Cuny and F.~Merlev{\`e}de. {Strong invariance principles with rate for
  ``reverse'' martingales and applications}. \emph{J. Theor. Probab.} (2015)
  137--183.

\bibitem{DedeckerMerlevede15}
J.~Dedecker and F.~Merlev\`{e}de. Moment bounds for dependent sequences in
  smooth {B}anach spaces. \emph{Stochastic Process. Appl.} \textbf{125} (2015)
  3401--3429.

\bibitem{GM13}
G.~A. Gottwald and I.~Melbourne. {Homogenization for deterministic maps and
  multiplicative noise}. \emph{Proc. R. Soc. London A} (2013) 20130201.

\bibitem{Gouezel04a}
S.~Gou{\"e}zel. {Sharp polynomial estimates for the decay of correlations}.
  \emph{Israel J. Math.} \textbf{139} (2004) 29--65.

\bibitem{Gouezel05}
S.~Gou{\"e}zel. Berry-{E}sseen theorem and local limit theorem for non
  uniformly expanding maps. \emph{Ann. Inst. H. Poincar\'e Probab. Statist.}
  \textbf{41} (2005) 997--1024.

\bibitem{Gouezel07}
S.~Gou{\"e}zel. {Statistical properties of a skew product with a curve of
  neutral points}. \emph{Ergodic Theory Dynam. Systems} \textbf{27} (2007)
  123--151.

\bibitem{Gouezel10b}
S.~Gou{\"e}zel. Characterization of weak convergence of {B}irkhoff sums for
  {G}ibbs-{M}arkov maps. \emph{Israel J. Math.} \textbf{180} (2010) 1--41.

\bibitem{GouezelM14}
S.~Gou{\"e}zel and I.~Melbourne. {Moment bounds and concentration inequalities
  for slowly mixing dynamical systems}. \emph{Electron. J. Probab.} \textbf{19}
  (2014) 1--30.

\bibitem{Hu04}
H.~Hu. Decay of correlations for piecewise smooth maps with indifferent fixed
  points. \emph{Ergodic Theory Dynam. Systems} \textbf{24} (2004) 495--524.

\bibitem{HuVaienti09}
H.~Hu and S.~Vaienti. {Absolutely continuous invariant measures for
  non-uniformly expanding maps}. \emph{Ergodic Theory Dynam. Systems}
  \textbf{29} (2009) 1185--1215.

\bibitem{HuVaientiapp}
H.~Hu and S.~Vaienti. {Lower bounds for the decay of correlations in
  non-uniformly expanding maps}. \emph{Ergodic Theory Dynam. Systems}
\textbf{39} (2019) 1936-1970.

\bibitem{JungZhangapp}
P.~Jung and H.-K. Zhang. Stable laws for chaotic billiards with cusps at flat
  points. \emph{Annales Henri Poincar\'e} \textbf{19} (2018) 3815--3853.  

\bibitem{Keller85}
G.~Keller. Generalized bounded variation and applications to piecewise
  monotonic transformations. \emph{Z. Wahrsch. Verw. Gebiete} \textbf{69}
  (1985) 461--478.

\bibitem{KM16}
D.~Kelly and I.~Melbourne. {Smooth approximation of stochastic differential
  equations}. \emph{Ann. Probab.} \textbf{44} (2016) 479--520.

\bibitem{Korepanov18}
A.~Korepanov. Equidistribution for nonuniformly expanding dynamical systems,
  and application to the almost sure invariance principle. \emph{Comm. Math.
  Phys.} \textbf{359} (2018) 1123--1138.

\bibitem{KKM19}
A.~Korepanov, Z.~Kosloff and I.~Melbourne. Explicit coupling argument for
  nonuniformly hyperbolic transformations. \emph{Proc. Roy. Soc. Edinburgh A}
\textbf{149} (2019) 101--130.

\bibitem{LSV99}
C.~Liverani, B.~Saussol and S.~Vaienti. {A probabilistic approach to
  intermittency}. \emph{Ergodic Theory Dynam. Systems} \textbf{19} (1999)
  671--685.

\bibitem{Markarian04}
R.~Markarian. Billiards with polynomial decay of correlations. \emph{Ergodic
  Theory Dynam. Systems} \textbf{24} (2004) 177--197.

\bibitem{M18}
I.~Melbourne. Superpolynomial and polynomial mixing for semiflows and flows.
  \emph{Nonlinearity} \textbf{31} (2018) R268--R316.

\bibitem{MN05}
I.~Melbourne and M.~Nicol. Almost sure invariance principle for nonuniformly
  hyperbolic systems. \emph{Comm. Math. Phys.} \textbf{260} (2005) 131--146.

\bibitem{MN08}
I.~Melbourne and M.~Nicol. Large deviations for nonuniformly hyperbolic
  systems. \emph{Trans. Amer. Math. Soc.} \textbf{360} (2008) 6661--6676.

\bibitem{MT14}
I.~Melbourne and D.~Terhesiu. {Decay of correlations for nonuniformly expanding
  systems with general return times}. \emph{Ergodic Theory Dynam. Systems}
  \textbf{34} (2014) 893--918.

\bibitem{MV16}
I.~Melbourne and P.~Varandas. A note on statistical properties for nonuniformly
  hyperbolic systems with slow contraction and expansion. \emph{Stoch. Dyn.}
  \textbf{16} (2016) 1660012, 13 pages.

\bibitem{MVsub}
I.~Melbourne and P.~Varandas. Convergence to a L\'evy process in the Skorohod
  ${\mathcal M}_1$ and ${\mathcal M}_2$ topologies for nonuniformly hyperbolic
  systems, including billiards with cusps. 
{\em Commun.\ Math.\ Phys.} \textbf{375} (2020) 653--678.

\bibitem{Pollicott85}
M.~Pollicott. {On the rate of mixing of Axiom A flows}. \emph{Invent. Math.}
  \textbf{81} (1985) 413--426.

\bibitem{PomeauManneville80}
Y.~Pomeau and P.~Manneville. Intermittent transition to turbulence in
  dissipative dynamical systems. \emph{Comm. Math. Phys.} \textbf{74} (1980)
  189--197.

\bibitem{Sarig02}
O.~M. Sarig. {Subexponential decay of correlations}. \emph{Invent. Math.}
  \textbf{150} (2002) 629--653.

\bibitem{Sarig06}
O.~Sarig. Continuous phase transitions for dynamical systems. \emph{Comm. Math.
  Phys.} \textbf{267} (2006) 631--667.

\bibitem{Saussol00}
B.~Saussol. Absolutely continuous invariant measures for multidimensional
  expanding maps. \emph{Israel J. Math.} \textbf{116} (2000) 223--248.

\bibitem{SzaszVarju07}
D.~Sz{\'a}sz and T.~Varj{\'u}. Limit laws and recurrence for the planar
  {L}orentz process with infinite horizon. \emph{J. Stat. Phys.} \textbf{129}
  (2007) 59--80.

\bibitem{Thaler80}
M.~Thaler. Estimates of the invariant densities of endomorphisms with
  indifferent fixed points. \emph{Israel J. Math.} \textbf{37} (1980)
  303--314.

\bibitem{VaientiZhangsub}
S.~Vaienti and H.-K. Zhang. Optimal bounds on correlation decay rates for
  nonuniform hyperbolic systems. Preprint, 2016.  arXiv:1605.01793

\bibitem{Young98}
L.-S. Young. Statistical properties of dynamical systems with some
  hyperbolicity. \emph{Ann. of Math.} \textbf{147} (1998) 585--650.

\bibitem{Young99}
L.-S. Young. Recurrence times and rates of mixing. \emph{Israel J. Math.}
  \textbf{110} (1999) 153--188.

\bibitem{Zhang17b}
H.-K. Zhang. Decay of correlations for billiards with flat points {I}: channel
  effects. \emph{Dynamical systems, ergodic theory, and probability: in memory
  of {K}olya {C}hernov}, Contemp. Math. \textbf{698}, Amer. Math. Soc.,
  Providence, RI, 2017, pp.~239--286.

\bibitem{Zhang17a}
H.-K. Zhang. Decay of correlations for billiards with flat points {II}: cusps
  effect. \emph{Dynamical systems, ergodic theory, and probability: in memory
  of {K}olya {C}hernov}, Contemp. Math. \textbf{698}, Amer. Math. Soc.,
  Providence, RI, 2017, pp.~287--316.

\bibitem{Zweimuller98}
R.~Zweim{\"u}ller. Ergodic structure and invariant densities of non-{M}arkovian
  interval maps with indifferent fixed points. \emph{Nonlinearity} \textbf{11}
  (1998) 1263--1276.

\end{thebibliography}
\end{document}